\theoremstyle{plain}
\newtheorem{theorem}[equation]{Theorem}
\newtheorem*{claim1}{Claim 1}
\newtheorem*{claim2}{Claim 2}
\newtheorem{proposition}[equation]{Proposition}
\newtheorem{lemma}[equation]{Lemma}
\newtheorem{corollary}[equation]{Corollary}
\theoremstyle{definition}
\newtheorem{example}[equation]{Example}
\newtheorem{situation}[equation]{}
\newtheorem{remark}[equation]{Remark}
\renewcommand{\mathbb}{\mathbf}
\numberwithin{equation}{section}
\title{On Vlasenko's formal group laws}
\author{Dingxin Zhang}
\begin{document}
\maketitle
\begin{abstract}
  Given a Laurent polynomial over a ring flat over \(\mathbb{Z}\), Vlasenko defines a formal
  group law. We identify this formal group law with a coordinate system of a
  formal group functor, prove its integrality. When the ``Hasse--Witt matrix''
  of the Laurent polynomial is invertible, Vlasenko defines a matrix by taking a
  certain \(p\)-adic limit. We show that this matrix is the Frobenius of the
  Dieudonné module of this formal group modulo \(p\).
\end{abstract}

\section*{Introduction}

On a flat \(\mathbb{Z}\)-algebra \(R\), given a Laurent polynomial \(f\) with
coefficients in \(R\),
Vlasenko~\cite{vlasenko:higher-hasse-witt, vlasenko:formal-groups-congruences}
introduced a formal group law
\begin{equation*}
  F_{f}
  \quad
  \text{(\textit{a priori}, \(F_{f}\) is only defined over
    \(R\otimes\mathbb{Q}\))}
\end{equation*}
using the coefficients of the Laurent polynomials \(f^{n}\), \(n=1,2,\ldots\).
While being very explicit and in some sense canonical, the definition of
\(F_{f}\) constitutes some notation. Thus we invite the reader
to consult~\S\ref{situation:setting-formal-group} for the precise definition.
In the two papers cited above, Vlasenko studied \(F_{f}\) and deduced many
arithmetic consequences that can be stated without mentioning \(F_{f}\).

In this short note, we shall report our understanding to these formal group laws. Two
topics will be discussed. Each topic constitutes a section in the main text. Let
us give a brief overview of our results.

\subsection*{(a) Relation with geometry of hypersurfaces in toric varieties}

The first task is to report the integrality of \(F_{f}\) and its relation with
Artin--Mazur formal group functors~\cite{artin-mazur:formal-groups}. Using toric geometry, we prove the following
result.

\begin{theorem}%
  \label{theorem:main-2}
  Let \(R\) be a flat \(\mathbb{Z}\)-algebra.
  \begin{enumerate}
  \item \textup{(\(=\) Corollary~\ref{corollary:coord})} If \(R\) is noetherian,
    the formal group law
    \(F_{f}\) is a coordinate system of an Artin--Mazur type formal group
    functor~\eqref{situation:sheaf-cohomlogy} on \(R\).
  \item
    \textup{(\(=\) Theorem~\ref{theorem:integrality}, also a theorem of Vlasenko)}
    Without assuming \(R\) to be noetherian, the formal group law \(F_{f}\) is a power
    series with coefficients in \(R\).
  \end{enumerate}
\end{theorem}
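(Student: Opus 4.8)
The plan is to establish (1) in earnest and then derive (2) from it by Noetherian approximation. Part (2) is genuinely the softer statement once the geometric picture of (1) is available: an Artin--Mazur functor is by construction a functor on $R$-algebras, so any coordinate on the formal group it pro-represents automatically has coefficients in $R$. Thus essentially all of the work is in (1), and the role of the Noetherian hypothesis there is to guarantee pro-representability.

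For (1), I would first fix the toric geometry. The Newton polytope $\Delta$ of $f$ determines a toric variety $\mathbb{P}_\Delta$, and a suitable homogenization of $f$ cuts out a family of hypersurfaces $X\subset \mathbb{P}_\Delta$ over $R$ whose restriction to the open torus $\mathbb{G}_m^{d}$ is just $\{f=0\}$. The functor \eqref{situation:sheaf-cohomlogy} is then the formal completion at the identity of the degree $d-1$ sheaf-cohomology functor $A\mapsto H^{d-1}(X_A,\mathbb{G}_m)$ on Artinian $R$-algebras $A$ (the Artin--Mazur functor $\Phi^{d-1}_X$ in the degree matching the dimension of the hypersurface). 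When $R$ is Noetherian I would verify Schlessinger's criteria — using formal smoothness of the deformation problem for the toric hypersurface and the finiteness of the relevant cohomology — to conclude that $\Phi^{d-1}_X$ is pro-represented by a smooth one-dimensional formal group over $R$; this step is where the Noetherian assumption is indispensable, since it underlies both the deformation-theoretic finiteness and the existence of a hull.

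The crux is to identify Vlasenko's explicit series $F_f$ with a coordinate on this geometric formal group, and this is the step I expect to be the main obstacle. Over $R\otimes\mathbb{Q}$ both group laws become isomorphic to $\hat{\mathbb{G}}_a$ via their logarithms, so it suffices to choose a formal parameter in which the two logarithms agree. On the geometric side the logarithm is computed by the period of the canonical invariant differential, and the toric picture supplies the distinguished form $\omega=\frac{dx_1}{x_1}\wedge\cdots\wedge\frac{dx_d}{x_d}$; expanding $\oint \omega/(1-t f)$ over the torus cycle produces the series $\sum_{n\geq 1}(\text{constant term of } f^{n})\,t^{n}$, since the constant term of $f^{n}$ is exactly the residue of $f^{n}\omega$. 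These are precisely the coefficients out of which Vlasenko builds the logarithm of $F_f$. Reconciling the two — via the Gauss--Manin connection and a Dwork/Katz-style residue computation expressing periods as generating functions of the constant terms of $f^n$ — is where the toric combinatorics has to be matched against the cohomological logarithm. Once the logarithms coincide over $R\otimes\mathbb{Q}$, the group laws coincide, so $F_f$ is a coordinate on $\Phi^{d-1}_X$, proving (1).

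Finally, for (2) I would remove the Noetherian hypothesis by descent. The coefficients of $f$ generate a finitely generated $\mathbb{Z}$-subalgebra $R_0\subseteq R$; since $R$ is flat over $\mathbb{Z}$ it is torsion-free, hence so is $R_0$, which makes $R_0$ a flat and (being finitely generated) Noetherian $\mathbb{Z}$-algebra. As $f$ is already defined over $R_0$ and the formation of $F_f$ is functorial in the coefficient ring, the series $F_f$ over $R$ is the image of $F_f$ over $R_0$ under $R_0\hookrightarrow R$. Applying part (1) to $R_0$ shows the latter has coefficients in $R_0\subseteq R$, which yields the integrality claimed in (2).
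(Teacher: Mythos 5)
Your proposal has the right broad shape (toric compactification, Artin--Mazur functor, match logarithms over $R\otimes\mathbb{Q}$, then Noetherian approximation for (2), which is exactly the paper's Remark~\ref{remark:noetherian-is-ok}), but it contains two substantive errors that would derail the argument.

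First, you attach the formal group to the wrong object. You take the Artin--Mazur functor $\Phi^{d-1}(X,\mathbb{G}_{\mathrm{m}})$ of the hypersurface $X$ itself. The paper instead uses $\Phi^{d}(\mathbb{P}_{\Sigma},\mathcal{O}_{\mathbb{P}_{\Sigma}}(-X))$, i.e.\ the top-degree cohomology of the \emph{ambient} toric scheme with coefficients in $\widehat{\mathbb{G}}_{\mathrm{m}}$ of the \emph{ideal sheaf} of $X$, and Remark~\ref{remark:flatness} explains why this is not a cosmetic choice: the ideal sheaf is invertible, hence automatically flat over $R$, whereas your functor on $X$ requires $X\to\operatorname{Spec}R$ to be flat for the exactness properties (Lemma~\ref{lemma:left-exactness-artin-mazur}) that make the functor a formal Lie group. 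No flatness of $X$ is assumed in the theorem, so your deformation-theoretic setup (Schlessinger's criteria for $\Phi^{d-1}_X$) can fail outright. Second, the formal group is \emph{not} one-dimensional: it has dimension $N=\#\,\Delta^{\circ}\cap\mathbb{Z}^{d}$, with coordinates indexed by interior lattice points, and the logarithm is the matrix of series $L_{v,w}(\tau)=\sum_{\nu}\beta_{v,w,\nu}\tau^{\nu}/\nu$ where $\beta_{v,w,\nu}$ is the coefficient of $t^{\nu w-v}$ in $f^{\nu-1}$. Your period computation $\oint\omega/(1-tf)$ recovers only the constant-term entry (essentially the case $v=w=0$ for a reflexive polytope with a single interior point) and cannot see the full matrix. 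Relatedly, the step you defer as "the main obstacle" --- identifying the two logarithms --- is precisely where the paper does concrete work: it computes the $\check{\mathrm{C}}$ech cohomology class of $\sum_{\nu}\frac{1}{\nu}\widetilde{f}^{\nu}z^{-\nu\mathbf{w}}\otimes a_{\mathbf{w}}^{\nu}$ against the explicit basis of $\mathrm{H}^{d}(\mathbb{P}_{\Sigma},\mathcal{O}(-X))$ supplied by Lemma~\ref{proposition:compute-cartier-divisor}, and the paper's smoothness argument runs through Zink's criterion for functors on nil algebras rather than Schlessinger theory. As written, your proof of (1) does not go through, and since (2) is deduced from (1), the gap propagates.
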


The proof of Theorem~\ref{theorem:main-2}(1) is a replica of Stienstra's
article~\cite{stienstra:formal-group-from-algebraic-varieties} with some inputs
from toric geometry. Item (2) will be a consequence of Item (1).

Some comments on Theorem~\ref{theorem:main-2}(2) are in order.
\begin{itemize}
\item Beukers and Vlasenko~\cite{beukers-vlasenko:dwork-crystal-1} proved
  Theorem~\ref{theorem:main-2}(2) when \(R\) is \(p\)-adically complete and the
  Hasse--Witt matrix (see~\eqref{eq:vlasenko-matrix}) of \(f\) is
  invertible.
\item If \(p\) is a prime number, and if \(R\) admits a lift of the absolute
  Frobenius operator of \(R/p\), Vlasenko~\cite{vlasenko:higher-hasse-witt}
  proves that \(F_{f}\) has coefficients in \(R_{(p)}\). In particular, Vlasenko
  is able to prove Theorem~\ref{theorem:main-2}(2) for polynomial rings
  \(R=\mathbb{Z}[x_1,\ldots,x_{N}]\). In fact, this is already enough to prove
  Theorem~\ref{theorem:main-2}(2) because we can prove the integrality of a
  ``universal'' case. We shall give an alternative treatment, and deduce
  Theorem~\ref{theorem:main-2}(2) as a consequence of
  Theorem~\ref{theorem:main-2}(1). Due to our use of toric geometry, the
  integrality result obtained is not as general as Vlasenko's, see the footnote
  for~\eqref{situation:setting-formal-group}.
\end{itemize}

The relation with Artin--Mazur functors was noted by
Vlasenko~\cite{vlasenko:higher-hasse-witt} when the toric scheme defined by
the Newton polytope of \(f\) is a projective space.
However, our result shows that \(F_{f}\) should rather be thought as a
coordinate system of the formal group functor associated with the \emph{ideal sheaf}
of the hypersurface defined by \(f\), instead of the Artin--Mazur formal group
functor of the hypersurface itself. See
Remark~\ref{remark:flatness} below.

\begin{remark}[We do not need the flatness of the hypersurface defined by \(f\)]%
  \label{remark:flatness}
  The Laurent polynomial \(f\) defines a hypersurface \(X\) in a suitable
  toric scheme \(\mathbb{P}\) over \(R\). In~\cite{stienstra:formal-group-from-algebraic-varieties}, \(f\)
  corresponds to a hypersurface in \(\mathbb{P}^{n}\). Stienstra imposed a running
  flatness hypothesis on this hypersurface. In our result, we \emph{do not} need
  the flatness of the hypersurface defined by \(f\). The reason is that in our
  theorem we do not identify the formal group \(F_{f}\) with the Artin--Mazur
  formal group of the hypersurface \(X\), but with a variant of Artin--Mazur
  formal group attached to its ideal sheaf (which is an
  invertible sheaf, always flat over \(R\)). In fact, this point already
  occurred in Stienstra's article~[loc.~cit.]. See
  Lemma~\ref{lemma:formal-lie-cohomology} and
  \S{}\ref{situation:finish-the-proof}.

  When \(X\) \emph{is} flat over \(R\), and when \(R = W(k)\) is the ring of
  Witt vectors of a perfect field of characteristic \(p > 0\), the isogeny class
  of the Cartier--Dieudonné module of the reduction of \(F_{f}\) mod \(p\) is
  the slope \(<1\) part of the rigid cohomology group (with proper support)
  \(\mathrm{H}^{\dim\mathbb{P}_{k}}_{\text{rig,c}}(\mathbb{P}_{k} - X_k)^{<1}\).
  See Remark~\ref{remark:relation-with-rigid-cohomology}. Therefore the explicit
  formal group law allows us to extract the information of the slope \(<1\) part
  of the Newton polygon of the rigid cohomology.
\end{remark}

At the end of \S\ref{sec:integrality} we revisit a theorem of Honda concerning
formal group laws from hypergeometric equations. We explain why his formal group
law is only integral over \(\mathbb{Z}_{(p)}\) for large \(p\), by relating his
equations with Picard--Fuchs equations of ``underdiagram deformations''. The
formal integrals of some special power series solutions to the latter ordinary differential equations are
logarithms of Vlasenko's group laws.

\subsection*{(b) Higher Hasse--Witt matrices and Frobenius operators}

Vlasenko~\cite{vlasenko:higher-hasse-witt} considered the matrices
\begin{equation*}
  (\alpha_{s})_{u,v \in \Delta^{\circ} \cap \mathbb{Z}^{d}} =
  \text{the coefficient of } t^{p^{s}v-u}\text{ in }f^{p^{s}-1}
\end{equation*}
over a \(p\)-adically complete torsion free ring \(R\). These matrices were
called ``higher Hasse--Witt matrices'' by Vlasenko. If \(\alpha_{s}\) are all
invertible modulo \(p\), then Vlasenko proves the \(p\)-adic limit
\begin{equation*}
  \alpha=\lim_{s\to\infty}\alpha_{s+1}(\alpha_s^{\sigma})^{-1}
\end{equation*}
exists. Vlasenko asked~[loc.~cit.] whether \(\alpha\) is a Frobenius matrix
acting on some crystal (which she did not specify). This question is confirmed
by~\cite[Remark~5.4]{beukers-vlasenko:dwork-crystal-1} using what they call the
``Dwork crystal'' (with a very mild constraint on the coefficients of \(f\)).
Huang--Lian--Yau--Yu~\cite{hlyy-hasse} also studied this question,
and they are able to answer Vlasenko's question assuming \(\Delta\) is a smooth,
very ample
polytope and \(f\) defines a smooth hypersurface in the toric variety defined
by \(\Delta\).

In Section~\ref{sec:recover-frobenius}, we give an alternative answer to
Vlasenko's question (without constraints on \(\Delta\) or \(f\)).
Let \(\Gamma_{f}\) be the mod \(p\) reduction of the formal group \(\Phi_{f}\)
mentioned in Theorem~\ref{theorem:main-2}. If \(\alpha_{1}\) is invertible mod
\(p\), we shall show that the Dieudonné module of \(\Gamma_{f}\) is isoclinic of
slope \(0\)~\eqref{lemma:criterion-ordinary}, and the \(p\)-adic limit matrix
\(\alpha\) is the Frobenius matrix of the Dieudonné module of \(\Gamma_{f}\).

\begin{theorem}[= Theorem~\ref{theorem:dieudonne-module-frob}]%
  \label{theorem-0.3}
  Assume that \(R\) is \(p\)-adically complete flat \(\mathbb{Z}\)-algebra. Let
  \(\mathbb{D}^{\ast}(\Gamma_f)\) be the (covariant) Dieudonné crystal of \(\Gamma_{f}\) on
  \(R/p\). Assume that the matrix \(\alpha_{1}\) (see above) is invertible. Then
  \(\alpha\) is a matrix of the Frobenius operation of \(\mathbb{D}^{\ast}(\Gamma_f)_{R}\).
\end{theorem}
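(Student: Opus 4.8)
The plan is to reduce the assertion to a telescoping identity, using the explicit model of the Dieudonné crystal supplied by Section~\ref{sec:integrality} together with the ordinarity criterion. First I would fix the free \(R\)-module \(M=\mathbb{D}^{\ast}(\Gamma_f)_{R}\) with its distinguished basis \(\{e_{u}\}_{u\in\Delta^{\circ}\cap\mathbb{Z}^{d}}\) arising from the identification of \(F_{f}\) with a coordinate system of \(\Phi_{f}\) (Theorem~\ref{theorem:main-2}), and let \(\varphi\) denote the \(\sigma\)-linear Frobenius on \(M\). Writing \(A\) for its matrix, normalized so that \(\varphi(e)=A\,e^{\sigma}\), the theorem becomes the single equality \(A=\alpha\).

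The first substantive step is to observe that the hypothesis makes \(\varphi\) invertible. Since \(\alpha_{1}\) is the matrix of the Hasse--Witt operator on \(M/p\) (this is the content of~\eqref{eq:vlasenko-matrix}), its invertibility means that \(\Gamma_{f}\) is isoclinic of slope \(0\) by~\eqref{lemma:criterion-ordinary}; hence \(\varphi\) is an isomorphism and \(A\) is invertible over \(R\). Consequently the matrix of the iterated Frobenius \(\varphi^{s}\) is the product \(A\,A^{\sigma}\cdots A^{\sigma^{s-1}}\), and in particular this product is invertible for every \(s\).

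The heart of the argument is the congruence \(\alpha_{s}\equiv A\,A^{\sigma}\cdots A^{\sigma^{s-1}}\pmod{p^{c(s)}}\) with \(c(s)\to\infty\), which identifies Vlasenko's combinatorial higher Hasse--Witt matrices with the iterated crystalline Frobenius. I would establish this by tracing through the Stienstra-type model of \S\ref{situation:finish-the-proof}: the Frobenius on the ideal-sheaf crystal (cf.\ Lemma~\ref{lemma:formal-lie-cohomology}) is computed as multiplication by \(f^{p-1}\) followed by the Cartier-type operation dividing monomial exponents by \(p\), so that \(\varphi^{s}\) amounts to multiplication by \(f^{p^{s}-1}\) followed by division of exponents by \(p^{s}\); extracting the coefficient carrying \(e_{u}\) to \(e_{v}\) yields precisely \([t^{p^{s}v-u}]\,f^{p^{s}-1}=\alpha_{s}\). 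The \(p\)-adic error \(c(s)\) records the discrepancy between the genuine integral crystalline Frobenius and this naive divide-the-exponents formula, and bounding it is exactly the Dwork-type congruence that Vlasenko's convergence result encodes; this is the step I expect to be the main obstacle, as it requires matching the coefficients of \(f^{p^{s}-1}\) with the crystalline Frobenius to increasing \(p\)-adic precision while controlling the denominators in the logarithm of \(F_{f}\) and checking compatibility of the curve description of \(M\) with the divided-power structure.

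Granting this congruence the conclusion is immediate. Applying \(\sigma\) gives \(\alpha_{s}^{\sigma}\equiv A^{\sigma}\cdots A^{\sigma^{s}}\), so that
\[
  \alpha_{s+1}\,(\alpha_{s}^{\sigma})^{-1}
  \;\equiv\;
  \bigl(A\,A^{\sigma}\cdots A^{\sigma^{s}}\bigr)\bigl(A^{\sigma}\cdots A^{\sigma^{s}}\bigr)^{-1}
  \;=\;A
  \pmod{p^{c(s)}}.
\]
Letting \(s\to\infty\) yields \(\alpha=\lim_{s}\alpha_{s+1}(\alpha_{s}^{\sigma})^{-1}=A\), which is exactly the matrix of the Frobenius operation on \(\mathbb{D}^{\ast}(\Gamma_f)_{R}\). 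As an alternative to the crystalline route for the key congruence, one could instead read the matrices \(A^{\sigma^{i}}\) off the functional equation satisfied by the logarithm of \(F_{f}\), since the coefficients of \(f^{p^{s}-1}\) are precisely the matrix coefficients of the iterated Frobenius acting on the \(p\)-typical curves; this might shorten the denominator bookkeeping but the essential difficulty—the Dwork congruence controlling \(c(s)\)—remains the same.
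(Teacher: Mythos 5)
Your endgame---the telescoping computation \(\alpha_{s+1}(\alpha_s^{\sigma})^{-1}\equiv A \pmod{p^{c(s)}}\) followed by passage to the limit---is exactly the final step of the paper's argument in Lemma~\ref{lemma:absolute-conjecture-proof}, but the step you yourself flag as ``the main obstacle'' is the entire content of the theorem, and your proposal does not close it. The congruence \(\alpha_s \equiv A A^{\sigma}\cdots A^{\sigma^{s-1}} \pmod{p^{c(s)}}\), with \(A\) the matrix of the crystalline Frobenius, is essentially equivalent to the assertion being proved: Proposition~\ref{proposition:vlasenko-congruences} only gives the Cauchy property of the sequence \(\alpha_{s+1}(\alpha_s^{\sigma})^{-1}\), not its identification with any Frobenius. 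Your proposed mechanism---that the Frobenius on the ideal-sheaf crystal is ``multiplication by \(f^{p-1}\) followed by dividing exponents by \(p\)''---is only correct modulo \(p\) (which is why \(\alpha_1 \bmod p\) is the Hasse--Witt matrix); promoting it to an integral statement with error tending to zero \(p\)-adically is precisely the Dwork-congruence analysis carried out in the Beukers--Vlasenko ``Dwork crystal'' approach, and the present paper deliberately avoids that route.

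The paper's actual mechanism is different: it never computes the crystalline Frobenius on the cohomological model. Over \(W(k)\) (Lemma~\ref{lemma:absolute-conjecture-proof}) the slope-\(0\) hypothesis, via Lemma~\ref{lemma:criterion-ordinary}, forces \(\Phi_f \cong \mathrm{LT}(M,\eta)\), Cartier's generalized Lubin--Tate group, which carries a basic set of \(p\)-typical curves \(\gamma_v^{\ast}\) whose logarithmic coefficients are the Frobenius iterates \(\eta^{s}(e_v)\), i.e.\ the matrices \(b_s = b_1 b_1^{\sigma}\cdots b_1^{\sigma^{s-1}}\); Vlasenko's logarithm supplies a second basic set \(\gamma_v\) with coefficients \(\alpha_s\). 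Since the two sets have the same tangent vectors and the same reduction mod \(p\), Lemma~\ref{lemma:two-basic-sets-of-curves}---a purely curve-theoretic statement resting on the uniqueness of the expansion \(\gamma=\sum V^{n}[x_{n,i}]\gamma_i\)---yields \(\alpha_s\equiv b_s \pmod{p^{s}}\) with no Dwork-type bookkeeping. Finally, general \(R\) is handled by a separate reduction (Katz's embedding \(R\hookrightarrow W(K_0^{\mathrm{perf}})\) when \(R/p\) is a domain, then a universal \(\delta\)-ring \(\mathcal{R}\) and base change of Dieudonné crystals in general), which your proposal also omits: over a non-perfect base you cannot simply ``fix the basis \(\{e_u\}\) of \(\mathbb{D}^{\ast}(\Gamma_f)_{R}\) and its Frobenius matrix \(A\)'' without this descent argument.
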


Here, we view \(\mathop{\mathrm{Spf}}R\) as an inductive system divided power thickening of
\(R/p\), i.e., an ind-object in the big crystalline site
\(\mathrm{CRIS}(\mathrm{Spec}(R/p)/\mathbb{Z}_{p})\), and
\(\mathbb{D}^{\ast}(\Gamma_f)_{R}\) is the Zariski sheaf on
\(\mathop{\mathrm{Spf}}R\) defined by the crystal
\(\mathbb{D}^{\ast}(\Gamma_f)\) (via taking limit).

When the hypersurface \(X\) defined by \(f\) in a toric scheme is flat over
\(R\) (without assuming \(\alpha_1\) invertible), as mentioned in
Remark~\ref{remark:flatness} above, the Dieudonné module of \(\Gamma_{f}\) gives
the slope \(<1\) part of the hypersurface defined by \(f\). See also
Remark~\ref{remark:relation-with-rigid-cohomology}. The significance of
Theorem~\ref{theorem-0.3} is that when \(\alpha_1\) is invertible, we have a
purely combinatorial way to extract the unit root part of (the rigid cohomology
of) the reduction of \(X \to \mathrm{Spec}(R)\), even when the general fibers
are singular.

\subsection*{Acknowledgments}
Professor Masha Vlasenko sent me a list of suggestions and corrections, and
clarified some of my misconceptions. I would like to thank her for her
invaluable help.

I am also grateful to Tsung-Ju Lee, for his suggestions on Example~\ref{example:use-gkz}; to
Shizhang Li, for pointing out how to use \(\delta\)-rings in the proof of
Theorem~\ref{theorem:dieudonne-module-frob}; to Qixiao Ma and Luochen Zhao, for discussions on
formal groups; to Chenglong Yu, for answering my questions about his paper; and
to Jie Zhou, for discussions on GKZ systems.

\section{Integrality of Vlasenko's formal group laws}
\label{sec:integrality}
\begin{situation}%
  \label{situation:setting-formal-group}\textbf{Notation and conventions.}
  In this section we fix the following notation and conventions.
  Let \(R\) be a commutative ring flat over \(\mathbb{Z}\).
  Let
  \begin{equation*}
    f(t) = \sum_{u\in \mathbb{Z}^{d}}a_{u}t^{u}\in R[t_1,\ldots,t_{d},(t_1\cdots t_{d})^{-1}]
  \end{equation*}
  be a Laurent polynomial with coefficients in \(R\). Let \(\Delta\) be the
  Newton polytope of \(f\). Recall that \(\Delta\) is the convex hull in
  \(\mathbb{R}^{d}\) of \(\{w \in \mathbb{Z}^{d}: a_{w} \neq 0\}\).

  We shall assume that the dimension of \(\Delta\) equals \(d\)%
  \footnote{Vlasenko pointed out to me that her integrality proof does not require
    \(\Delta\) to be full dimensional as we assumed here. Thus our integrality
    proof is not as general as hers. Assuming her integrality theorem, the
    results in \S\ref{sec:recover-frobenius} can go through for an arbitrary
    \(\Delta\), except Remark~\ref{remark:relation-with-rigid-cohomology}, which
    requires the relation with algebraic geometry.}, and that the interior
  \(\Delta^{\circ}\) of \(\Delta\) contains at least one lattice point.

  Following Vlasenko, we define, for \(v,w \in \Delta^{\circ}\cap\mathbb{Z}^{d}\),
  \begin{equation*}
    L_{v,w}(\tau) = \sum_{\nu=1}^{\infty}  \beta_{v,w,\nu}\frac{\tau^{\nu}}{\nu},
  \end{equation*}
  where \(\beta_{v,w,\nu} \in R\) equals the coefficient of \(t^{\nu w - v}\) in the
  expansion of \(f(t)^{\nu-1}\). We define
  \begin{equation*}
    L_{v}(\tau_{w}:w \in \Delta^{\circ}\cap \mathbb{Z}^{d}) = \sum_{w\in \Delta^{\circ}\cap \mathbb{Z}^{d}}L_{v,w}(\tau_{w}).
  \end{equation*}
  Then
  \begin{equation}
    \label{eq:formal-logarithm}
    L = (L_{v}: v\in \Delta^{\circ} \cap \mathbb{Z}^{d}) \in (R\otimes \mathbb{Q})[\![\tau_{w}: w\in \Delta^{\circ}\cap \mathbb{Z}^{d}]\!]^{N}
  \end{equation}
  where \(N=\#\Delta^{\circ}\cap\mathbb{Z}^{d}\). Finally, we define an
  \(N\)-dimensional formal group law on \(R\otimes_{\mathbb{Z}} \mathbb{Q}\) by
  \begin{equation}
    \label{eq:formal-group-law}
    F_{f}(x,y) = L^{-1}(L(x) + L(y)) \in (R\otimes \mathbb{Q})[\![x,y]\!].
  \end{equation}
  Here
  \begin{equation*}
    x = (x_{w}: w\in \Delta^{\circ} \cap \mathbb{Z}^{d}), \quad
    y = (y_{w}: w\in \Delta^{\circ} \cap \mathbb{Z}^{d}).
  \end{equation*}
\end{situation}

M.~Vlasenko~\cite[Theorem~2]{vlasenko:higher-hasse-witt} proves that if the
Frobenius operator of \(R/p\) lifts to \(R\), then \(F\in R_{(p)}[\![x,y]\!]\).
In fact, one can use the argument in Remark~\ref{remark:noetherian-is-ok} below
to prove that \(F \in R[\![x,y]\!]\) without assuming Frobenii can be lifted.
However, we give a different argument without using Vlasenko's theorem, and
prove the integrality using a different argument based on formal group functors.

\begin{theorem}%
  \label{theorem:integrality}
  Let notation and conventions be as
  in~\textup{\ref{situation:setting-formal-group}}.
  Then \(F_{f}(x,y) \in R [\![x,y]\!]\).
\end{theorem}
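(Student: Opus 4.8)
The plan is to deduce the theorem from Theorem~\ref{theorem:main-2}(1) by a standard ``universal family'' reduction, so that the only substantive input is the noetherian case. First I would dispose of the noetherian case directly: if \(R\) is noetherian, Theorem~\ref{theorem:main-2}(1) (that is, Corollary~\ref{corollary:coord}) identifies \(F_{f}\) with a coordinate system of a formal group functor that is pro-represented by an honest formal group \emph{over \(R\)}. The group law of a formal group over \(R\) has, by definition, coefficients in \(R\); hence already \(F_{f}\in R[\![x,y]\!]\) in this case.

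Next I would reduce the general (possibly non-noetherian) case to the noetherian one. Let \(S=\{u\in\mathbb{Z}^{d}:a_{u}\neq 0\}\) be the (finite) support of \(f\), so that \(\Delta=\mathrm{conv}(S)\), and form the polynomial ring \(A=\mathbb{Z}[\tilde a_{u}:u\in S]\) together with the universal Laurent polynomial \(\tilde f=\sum_{u\in S}\tilde a_{u}t^{u}\). The ring \(A\) is noetherian and flat (indeed free) over \(\mathbb{Z}\), and since each \(\tilde a_{u}\) is a nonzero element of the integral domain \(A\), the Newton polytope of \(\tilde f\) is again \(\mathrm{conv}(S)=\Delta\). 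In particular \(\tilde f\) satisfies all the hypotheses of \ref{situation:setting-formal-group}, and the combinatorial data \(\Delta^{\circ}\cap\mathbb{Z}^{d}\) (hence the dimension \(N\)) agree for \(\tilde f\) and for \(f\). Applying the noetherian case to \(\tilde f\), the universal formal group law \(F_{\tilde f}\) has coefficients in \(A\).

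Finally I would specialize. The assignment \(\tilde a_{u}\mapsto a_{u}\) defines a ring homomorphism \(\phi\colon A\to R\); since \(A\) and \(R\) are both \(\mathbb{Z}\)-flat, they embed into \(A\otimes\mathbb{Q}\) and \(R\otimes\mathbb{Q}\), and \(\phi\) extends to \(\phi_{\mathbb{Q}}\colon A\otimes\mathbb{Q}\to R\otimes\mathbb{Q}\) with \(\phi_{\mathbb{Q}}|_{A}=\phi\). The whole construction of \(F_{f}\) is functorial in the coefficients: the numbers \(\beta_{v,w,\nu}\) are coefficients of \(\tilde f^{\nu-1}\), hence \(\phi\) carries them to the corresponding coefficients of \(f^{\nu-1}\), so \(\phi_{\mathbb{Q}}\) sends the logarithm \(L\) of \(\tilde f\) to that of \(f\). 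Because \(\beta_{v,w,1}=\delta_{v,w}\), the linear part of \(L_{v}\) is \(\tau_{v}\), i.e.\ \(L=\mathrm{id}+(\text{higher order})\); therefore the compositional inverse \(L^{-1}\) exists as a power series and its formation commutes with \(\phi_{\mathbb{Q}}\). Consequently \(F_{f}=\phi_{\mathbb{Q}}(F_{\tilde f})\) coefficientwise, and since \(F_{\tilde f}\in A[\![x,y]\!]\) with \(\phi(A)\subseteq R\), we conclude \(F_{f}\in R[\![x,y]\!]\).

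Given Theorem~\ref{theorem:main-2}(1), there is no analytic difficulty here; the work is the bookkeeping of the specialization. The one point requiring genuine care is to arrange the universal family so that its combinatorics match those of \(f\) exactly — this is why I take \(S\) to be the \emph{support} of \(f\) rather than merely a set whose convex hull is \(\Delta\), so that \(\phi\) is support-preserving and \(\Delta\) is unchanged — and to verify that \(F\) really does commute with base change, for which the decisive observation is that \(L\) has identity linear part, making the compositional inverse itself functorial.
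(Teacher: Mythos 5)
Your proof is correct and follows essentially the same route as the paper: the substantive input in the noetherian case is exactly Corollary~\ref{corollary:coord} (that \(F_{f}\) is a coordinate system of the formal Lie group \(\Phi^{d}(\mathbb{P}_{\Sigma},\mathcal{O}_{\mathbb{P}_{\Sigma}}(-X))\) defined over \(R\)), which forces the group law to lie in \(R[\![x,y]\!]\). The only difference is cosmetic: where you reduce the general case to the noetherian one via the universal polynomial ring \(\mathbb{Z}[\tilde a_{u}:u\in S]\) and a support-preserving specialization (carefully checking that \(L^{-1}\) commutes with base change), the paper's Remark~\ref{remark:noetherian-is-ok} simply replaces \(R\) by the finitely generated, hence noetherian, subring generated by the coefficients of \(f\), for which the series \(L\) and \(F_{f}\) are literally unchanged.
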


\begin{remark}%
  [We can assume \(R\) is noetherian]
  \label{remark:noetherian-is-ok}
  The coefficients of \(f\) generates a finitely generated subring \(R'\) of
  \(R\). The coefficient expansions used in the definition are all contained in
  \(R'\), and the coefficients of the series \(L_{v,w}(\tau)\) are then in the
  power series \((R'\otimes \mathbb{Q})[\![\tau]\!]\), and \(F(x,y)\) lies in
  \((R'\otimes \mathbb{Q})[\![x,y]\!]\). If we replace \(R\) by \(R'\), and
  we can prove \(F_{f} \in R'[\![x,y]\!]\), then we automatically get
  \(F_{f}\in R [\![x,y]\!]\). Thus, it suffices to prove the theorem for \(R'\).
  The virtue of \(R'\) is that it is a quotient of a polynomial algebra over
  \(\mathbb{Z}\) with finitely many variables, hence is a noetherian ring.
\end{remark}

As noted in~\cite{vlasenko:higher-hasse-witt}, when the Laurent polynomial is of
a special form, the formal group law \(F_{f}\) provides a coordinate system to the
Artin--Mazur formal group of a hypersurface in a projective space. Our proof of
Theorem~\ref{theorem:integrality} is based on this observation:
using the method of
J.~Stienstra~\cite{stienstra:formal-group-from-algebraic-varieties}, we show \(F_{f}\)
is a coordinate system of a formal group functor related to a hypersurface in a
toric scheme.

The proof goes as follows.
\begin{enumerate}
\item Construct a toric scheme over \(R\) and a relatively ample divisor
  \((\widetilde{f}=0)\) using \(\Delta\) and \(f\)~\eqref{situation:notation-toric}.
\item Prove the formal group functor defined by the \emph{ideal sheaf} of
  \((\widetilde{f}=0)\) is a formal Lie
  group~\eqref{lemma:formal-lie-cohomology}.
\item Using Čech cohomology, find an explicit logarithm of this formal Lie
  group over \(R \otimes \mathbb{Q}\)~\eqref{situation:formal-logarithm}.
\item Prove that this formal logarithm agrees with the one
  in~\ref{situation:setting-formal-group}~\eqref{situation:finish-the-proof}.
\end{enumerate}

While Theorem~\ref{theorem:integrality} assumes \(R\) to be a flat
\(\mathbb{Z}\)-algebra, some of the results needed in the proof (e.g., the
smoothness of a certain formal group functor) are valid over an arbitrary ring.
Thus in the sequel we will be careful about the hypotheses.

\begin{situation}%
  \label{nil-algebra}
  We begin by recalling the notion of formal group functors.
  Let \(R\) be a ring.
  A (necessarily non-unital) \(R\)-algebra \(A\) is a \emph{nil \(R\)-algebra} if for
  any \(a \in A\), there exists \(r \geq 0\), such that \(a^r = 0\). Let
  \(\mathfrak{Nil}_{R}\) be the category of nil \(R\)-algebras. Let \(A_i\)
  \(i\in I\) be nil \(R\)-algebras. The direct sum
  \(\bigoplus_{i\in I} A_i\) is a nil algebra with multiplication
  \((a_i: i\in I) \cdot (a'_i : i \in I) = (a_i a'_i: i\in I)\)
  (\(a_i, a'_i \in A_i\), and only finitely many \(a_i \neq 0\)).
\end{situation}

\begin{situation}%
  \label{situation:formal-group}
  A \emph{commutative formal group functor},
  or simply a \emph{formal group functor}, or simply a \emph{formal group}
  (\emph{in this note, all formal groups are assumed to be commutative}), on a
  ring \(R\) is a functor
  \(\Phi:\mathfrak{Nil}_{R}\to\mathrm{Mod}_{\mathbb{Z}}\). Usually one imposes
  some exactness conditions such as the functor is asked to preserve direct
  sums, or to be ``exact''. We shall not impose these conditions. In
  Lemma~\ref{lemma:left-exactness-artin-mazur}, we will establish an exactness
  property that we need later.

  The simplest formal group functor is the formal additive group
  \begin{equation*}
    \widehat{\mathbb{G}}_{\text{a}} : \mathfrak{Nil}_{R} \to \mathrm{Mod}_{\mathbb{Z}},
    \quad A \mapsto (A,+)
  \end{equation*}
  which simply forgets the multiplication on \(A\).

  We say a formal group \(\Phi\) is a \emph{formal Lie group} if it is naturally
  isomorphic, as \emph{set} valued functors, to some
  \(\widehat{\mathbb{G}}_{\text{a}}^{n}\). An isomorphism of set-valued functors
  \(\widehat{\mathbb{G}}_{\text{a}}^{n}\to\Phi\) is called a
  \emph{coordinate system} of \(\Phi\). The group structure on \(\Phi\) defines,
  by transport of structures, a group structure on the ideal
  \((x_1,\ldots,x_n)\) of the ring of power
  series \(R[\![x_1,\ldots,x_n]\!]\) as the ideal \((x_1,\ldots,x_n)\) is an
  inverse limit of nil \(R\)-algebras. This gives rise to a power series
  \(F(x,y)\) subject to the axioms of a formal group law. Therefore, a formal
  group law is simply equivalent to a choice of a coordinate system on a formal
  Lie group.

  One important example of a formal Lie group is the formal multiplicative
  group, notation \(\widehat{\mathbb{G}}_{\text{m}}\), whose group law is given
  by the polynomial \(L(x,y) = 1-(1-x)(1-y) = x+y-xy\), which is the
  coordinate expansion of the usual multiplication at \(1\). The functorial
  definition of the formal multiplicative group is to send a nil algebra \(A\)
  over \(R\) to the multiplicative group \((A,\star)\), where for \(a,b\in A\),
  \(a\star b = a +b - ab\).
\end{situation}

\begin{situation}%
  \label{situation:sheaf-cohomlogy}
  Let \(\mathcal{I}\) be a sheaf of (possibly non-unital) \(R\)-algebras on an
  \(R\)-scheme \(S\). Let \(F\) be a formal group on \(R\). Then for each nil
  algebra \(A\), we can define a new sheaf of abelian groups by sheafifying the
  following presheaf
  \begin{equation*}
    F(\mathcal{I} \otimes_{R} A) : U \mapsto F(\mathcal{I}(U)\otimes_{R}A).
  \end{equation*}
  Taking the \(i\)th cohomology of this sheaf yields a formal group. When
  \(F=\widehat{\mathbb{G}}_{\text{m}}\), the above formal group is denoted by
  \(\Phi^{i}(X,\mathcal{I})\), thus:
  \begin{equation*}
    \Phi^{i}(X,\mathcal{I}):
    A \mapsto \mathrm{H}^{i}(X, \widehat{\mathbb{G}}_{\text{m}}(\mathcal{I}\otimes_{R}A)).
  \end{equation*}
  This is a variant of the deformation functor considered by
  Artin--Mazur~\cite{artin-mazur:formal-groups}. We shall call
  \(\Phi^i(X,\mathcal{I})\) the \emph{Artin--Mazur formal group functor} associated
  with \(\mathcal{I}\).

  Each \(R\)-module \(M\) defines a nil algebra subject to the condition
  \(m_1 \cdot m_2 = 0\) for all \(m_1,m_2 \in M\). Note that for such a nil
  algebra we have
  \(\widehat{\mathbb{G}}_{\mathrm{a}}(M)=\widehat{\mathbb{G}}_{\mathrm{m}}(M)\).
  The restriction of a formal
  functor to the subcategory of \(R\)-modules defines a functor called the
  \emph{tangent spcae} to \(\Phi\). If \(M\) is an \(R\)-module viewed as a
  nil algebra, then
  \begin{equation*}
    \Phi^{i}(X,\mathcal{I})(M) = \mathrm{H}^{i}(X,\widehat{\mathbb{G}}_{\mathrm{m}}(\mathcal{I}\otimes_{R} M))=
    \mathrm{H}^{i}(X,\mathcal{I}\otimes_{R}M).
  \end{equation*}
\end{situation}

\begin{lemma}%
  \label{lemma:left-exactness-artin-mazur}
  Let \(R\) be a noetherian ring.
  Let \(X\) be a finite type scheme over \(R\).
  Let \(\mathcal{I}\) be a coherent ideal
  sheaf of \(\mathcal{O}_{X}\), flat over \(R\). Assume further that for any
  ideal \(J\) of \(R\), \(\mathrm{H}^{i-1}(X \otimes_{R} R/J,\mathcal{I})=0\).
  Then \(\Phi^{i}(X,\mathcal{I})\) is a left exact functor. That is,
  for any exact sequence \(0 \to N_1 \to N_2 \to N_3 \to 0\) of nil algebras
  over \(R\) (exact as \(R\)-modules),
  \begin{equation*}
    0\to \Phi^{i}(X,\mathcal{I})(N_1) \to \Phi^{i}(X,\mathcal{I})(N_2) \to
    \Phi^{i}(X,\mathcal{I})(N_3)
  \end{equation*}
  is exact.
\end{lemma}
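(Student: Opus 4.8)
The plan is to convert the short exact sequence of nil algebras into a short exact sequence of sheaves of abelian groups, run the long exact cohomology sequence, and reduce the asserted left exactness to a single cohomology vanishing that the hypothesis is designed to supply. The starting observation is that $\widehat{\mathbb{G}}_{\mathrm{m}}$ is \emph{exact} on sequences which are exact as modules: for a morphism of nil algebras the induced group homomorphism is the identity on underlying sets, the identity element of $(A,\star)$ is the additive $0$, so kernels and images of the $\star$-group maps coincide with those of the additive maps. Hence tensoring $0\to N_1\to N_2\to N_3\to 0$ with the $R$-flat sheaf $\mathcal{I}$ gives a short exact sequence of $\mathcal{O}_X$-modules (flatness is exactly what preserves left exactness), and applying $\widehat{\mathbb{G}}_{\mathrm{m}}$ yields a short exact sequence $0\to\mathcal{G}_1\to\mathcal{G}_2\to\mathcal{G}_3\to 0$ of sheaves of abelian groups, where $\mathcal{G}_k=\widehat{\mathbb{G}}_{\mathrm{m}}(\mathcal{I}\otimes_R N_k)$. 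The long exact sequence reads $\cdots\to\mathrm{H}^{i-1}(X,\mathcal{G}_3)\xrightarrow{\delta}\mathrm{H}^{i}(X,\mathcal{G}_1)\to\mathrm{H}^{i}(X,\mathcal{G}_2)\to\cdots$, and since $\Phi^{i}(X,\mathcal{I})(N_k)=\mathrm{H}^{i}(X,\mathcal{G}_k)$, left exactness amounts to injectivity of $\mathrm{H}^{i}(X,\mathcal{G}_1)\to\mathrm{H}^{i}(X,\mathcal{G}_2)$ (exactness in the middle being automatic). That injectivity holds once $\delta=0$, for which it suffices to prove $\mathrm{H}^{i-1}(X,\widehat{\mathbb{G}}_{\mathrm{m}}(\mathcal{I}\otimes_R N_3))=0$.

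The whole lemma thus reduces to showing $\mathrm{H}^{i-1}(X,\widehat{\mathbb{G}}_{\mathrm{m}}(\mathcal{I}\otimes_R N))=0$ for every nil algebra $N$, which I would prove by dévissage, comparing the multiplicative group to the additive one along a power filtration. For an $R$-module $M$ regarded as a nil algebra with zero multiplication we have $\widehat{\mathbb{G}}_{\mathrm{m}}(\mathcal{I}\otimes_R M)=\widehat{\mathbb{G}}_{\mathrm{a}}(\mathcal{I}\otimes_R M)$, and the relevant cohomology is the ordinary sheaf cohomology of $\mathcal{I}\otimes_R M$. Here I would first use that $X$ is noetherian, so cohomology commutes with the filtered colimit expressing $M$ as the union of its finitely generated submodules, reducing to $M$ finitely generated; then a finite filtration of $M$ with cyclic quotients $R/J$, kept exact after $-\otimes_R\mathcal{I}$ by flatness, reduces through the long exact sequence to the case $M=R/J$. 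In that case $\mathcal{I}\otimes_R R/J$ is, by flatness, the restricted ideal sheaf on $X\otimes_R R/J$, so its $(i-1)$st cohomology equals $\mathrm{H}^{i-1}(X\otimes_R R/J,\mathcal{I})$, which vanishes by hypothesis.

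For a general nil algebra $N$ I would filter by the powers $N\supseteq N^2\supseteq\cdots$. Flatness of $\mathcal{I}$ realizes $\mathcal{I}\otimes_R N^{k+1}$ as a subsheaf of $\mathcal{I}\otimes_R N^{k}$ with quotient $\mathcal{I}\otimes_R(N^{k}/N^{k+1})$, and since $N^{k}\cdot N^{k}\subseteq N^{k+1}$ these quotients carry the zero multiplication; thus $\widehat{\mathbb{G}}_{\mathrm{m}}$ agrees with $\widehat{\mathbb{G}}_{\mathrm{a}}$ on them and their $(i-1)$st cohomology vanishes by the module case. The associated long exact sequences then propagate the vanishing down the filtration. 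When $N$ is nilpotent this is a finite descending induction terminating at $N^{r+1}=0$. The general case is where the main difficulty lies, since a nil algebra need not be nilpotent: I would dispose of it by writing $N$ as the filtered colimit of its finitely generated (commutative) subalgebras, each of which \emph{is} nilpotent, and invoking once more that cohomology over the noetherian scheme $X$ commutes with filtered colimits, so that $\mathrm{H}^{i-1}(X,\widehat{\mathbb{G}}_{\mathrm{m}}(\mathcal{I}\otimes_R N))$ is a colimit of terms already shown to vanish. The two points demanding the most care are therefore the compatibility of $\widehat{\mathbb{G}}_{\mathrm{m}}$ with module-level exact sequences and with filtered colimits—so that the homological machinery applies to the $\star$-groups at all—and the passage from nilpotent to merely nil algebras.
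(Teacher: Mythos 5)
Your proposal is correct and follows essentially the same route as the paper: both reduce, via flatness of \(\mathcal{I}\), the exactness of \(\widehat{\mathbb{G}}_{\mathrm{m}}\) on module-exact sequences, and the long exact cohomology sequence, to the vanishing of \(\mathrm{H}^{i-1}(X,\widehat{\mathbb{G}}_{\mathrm{m}}(\mathcal{I}\otimes_R N))\) for every nil algebra \(N\), then use filtered colimits over the noetherian space to reduce to finitely generated \(N\) and a finite dévissage terminating in the hypothesis \(\mathrm{H}^{i-1}(X\otimes_R R/J,\mathcal{I})=0\). The only (immaterial) difference is the dévissage itself: you filter by the powers \(N\supseteq N^{2}\supseteq\cdots\) and then filter the square-zero graded pieces by cyclic modules, while the paper uses a single tower of surjections whose kernels are square-zero cyclic modules \(R\epsilon\) with \(J=\mathrm{Ann}(\epsilon)\).
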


\begin{proof}
  Since \(\mathcal{I}\) is flat over \(R\), the sequence
  \begin{equation*}
    0 \to \mathcal{I} \otimes_R N_1 \to
    \mathcal{I} \otimes_R N_2 \to
    \mathcal{I} \otimes_R N_3 \to 0
  \end{equation*}
  is exact. Applying \(\widehat{\mathbb{G}}_{\mathrm{m}}\), we get the exact
  sequence
  \begin{equation*}
    0 \to \widehat{\mathbb{G}}_{\mathrm{m}}(\mathcal{I} \otimes_R N_1) \to
    \widehat{\mathbb{G}}_{\mathrm{m}}(\mathcal{I} \otimes_R N_2) \to
    \widehat{\mathbb{G}}_{\mathrm{m}}(\mathcal{I} \otimes_R N_3) \to 0.
  \end{equation*}
  Applying cohomology groups, we get an exact sequence
  \begin{equation*}
    \Phi^{i-1}(X,\mathcal{I})(N_3) \to
    \Phi^{i}(X,\mathcal{I})(N_1) \to
    \Phi^{i}(X,\mathcal{I})(N_2) \to
    \Phi^{i}(X,\mathcal{I})(N_3).
  \end{equation*}
  Therefore, it suffices to prove \(\Phi^{i-1}(X,\mathcal{I})(N)=0\) for any nil
  algebra \(N\). Since \(N\) is a filtered colimit of finitely generated nil
  algebras, and since (on a noetherian topological space) taking the Zariski cohomology group of sheaves of abelian
  groups commutes with filtered colimits, it suffices to assume that \(N\) is a
  finitely generated nil algebra. Each such algebra \(N\) fits into a sequence
  \begin{equation*}
    N = N_0 \to N_1 \to \cdots \to N_r = 0
  \end{equation*}
  in which \(N_j \to N_{j+1}\) is surjective, and the kernel is generated by a
  single element \(\epsilon\), such that \(\epsilon^2 = 0\). Therefore, by using
  induction on \(r\), and using the exact sequences of cohomology, the vanishing
  of \(\Phi^{i-1}(X,\mathcal{I})(N)\) follows from the vanishing of
  \(\Phi^{i-1}(X,\mathcal{I})(R\epsilon)\), where \(\epsilon^2=0\). Let \(J\) be
  the annihilator of \(\epsilon\). Then (recall
  \eqref{situation:sheaf-cohomlogy} that
  \(\widehat{\mathbb{G}}_{\mathrm{m}}(M)=\widehat{\mathbb{G}}_{\mathrm{a}}(M)\)
  if \(M\) is an \(R\)-module viewed as a nil algebra)
  \begin{equation*}
    \Phi^{i-1}(X,\mathcal{I})(R\epsilon) =
    \mathrm{H}^{i-1}(X,\mathcal{I}\otimes_{R}R\epsilon) = \mathrm{H}^{i-1}(X\otimes_{R}R/J,\mathcal{I})=0.
  \end{equation*}
  This completes the proof.
\end{proof}

Next, we recall some toric geometry that we need. Our reference
is~\cite{cox-little-schneck:toric-varieties}. This reference treats only complex
toric varieties. But the parts related to fans, polytopes, and combinatorial
description of sheaf cohomology groups are also valid over \(\mathbb{Z}\) and
over any ring. The
part on vanishing theorems work for any algebraically closed field, and hence
the vanishing over a ring follows from an easy base change argument.

\begin{situation}%
  \label{situation:notation-toric}
  Let \(R\) be an arbitrary ring.
  Let \(f \in R[t_1,\ldots,t_d,(t_1\cdots t_{d})^{-1}]\)  be a Laurent
  polynomial.
  Let \(\Delta \subset \mathbb{Z}^{d}=:M\) be the Newton polytope of
  \(f\).
  Let \(\Sigma \subset N = M^{\vee}\) be the normal fan of \(\Delta\).
  Let \(\Sigma(1)\) be the set of 1-cones of
  \(\Sigma\). We set up the following notation
  (see~\cite[Chapter~5]{cox-little-schneck:toric-varieties} for more about the
  Cox ring and homogeneous coordinates).
  \begin{enumerate}
  \item \(z_{\rho} : \mathbb{A}^{\Sigma(1)} \to \mathbb{A}^{1}\) is the
    coordinate function with respect to the 1-cone \(\rho\).
  \item For a cone \(\sigma \in \Sigma\),
    \(\widehat{z}_{\sigma}=\prod_{\rho\notin\sigma}z_{\rho}\).
  \item \(Z(\Sigma)=\mathrm{Zeros}\{\widehat{z}_{\sigma} : \sigma \in \Sigma\}\subset\mathbb{A}^{\Sigma(1)}\).
  \item \(U(\Sigma) = \mathbb{A}^{\Sigma(1)} \setminus Z(\Sigma)\).
  \item \(\mathbb{D}(\Sigma)\) is the algebraic torus associated with
    \(\mathrm{Cl}(\Sigma)\) defined by the exact sequence
    \begin{equation*}
      0 \to M \to \mathbb{Z}^{\Sigma(1)} \to \mathrm{Cl}(\Sigma) \to 0.
    \end{equation*}
  \item \(\mathbb{P}_{\Sigma}\) is the toric variety associated with the
    fan \(\Sigma\), thus we have a \(\mathbb{D}(\Sigma)\)-torsor
    \(\pi: U(\Sigma) \to \mathbb{P}_{\Sigma}\).
  \item \(S = S(\Sigma) = R[z_{\rho} : \rho \in \Sigma(1)]\) is the
    ``Cox ring'' of \(\mathbb{P}_{\Sigma}\).
  \end{enumerate}
  Note that the Cox ring \(S(\Sigma)\) receives a
  \(\mathrm{Cl}(\Sigma)\)-grading by the \(\mathbb{D}(\Sigma)\)-action.
  The Laurent polynomial \(f\) then has a grading \(\beta\) with respect to the
  action of \(\mathrm{Cl}(\Sigma)\), and therefore defines a relative Cartier divisor
  \(X\) on \(\mathbb{P}_{\Sigma}\).
\end{situation}

The following result is well-known in toric geometry. One can obtain it by
applying Serre duality
to~\cite[Proposition~5.4.1]{cox-little-schneck:toric-varieties}. Since its proof
is needed in the proof of Lemma~\ref{lemma:formal-lie-cohomology} below, we feel obliged to sketch the proof.

\begin{lemma}%
  \label{proposition:compute-cartier-divisor}
  Notation be as in~\textup{\ref{situation:notation-toric}}.
  Let \(Y\) be an effective Cartier divisor of \(\mathbb{P}_{\Sigma}\) whose
  divisor class is \(\beta \in \mathrm{Cl}(\Sigma)\).
  Then there is an isomorphism
  \begin{equation*}
    \left( \frac{\widetilde{f}}{\prod_{\rho\in\Sigma(1)}z_{\rho}}R[z_{\rho}^{-1}:\rho\in\Sigma(1)] \right)_{-\beta}
    \cong \mathrm{H}^{d}(\mathbb{P}_{\Sigma},\mathcal{O}_{\mathbb{P}_{\Sigma}}(-Y)).
  \end{equation*}
  where \(\widetilde{f} \in S\) is the defining equation of \(Y\).
\end{lemma}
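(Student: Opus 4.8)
The plan is to reduce the computation of this top coherent cohomology group to a purely combinatorial graded-module statement about the Cox ring \(S\), exactly as the reference to Serre duality and Cox's homogeneous-coordinate description suggests. Since \(\Sigma\) is the normal fan of the full-dimensional polytope \(\Delta\), the toric scheme \(\mathbb{P}_{\Sigma}\) is proper over \(R\), and being toric it is Cohen--Macaulay with dualizing sheaf \(\omega_{\mathbb{P}_{\Sigma}}\cong\mathcal{O}_{\mathbb{P}_{\Sigma}}(K)\), where \(K=-\sum_{\rho\in\Sigma(1)}D_{\rho}\) is the canonical divisor. In Cox coordinates the trace isomorphism \(\mathrm{H}^{d}(\mathbb{P}_{\Sigma},\omega_{\mathbb{P}_{\Sigma}})\cong R\) is represented by the single inverse monomial \(1/\prod_{\rho}z_{\rho}\) (the unique degree-\((-\delta)\) monomial with all exponents equal to \(-1\), where \(\delta=\deg\prod_{\rho}z_{\rho}\)); this is the source of the denominator in the asserted module. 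First I would invoke Serre duality over \(R\) to rewrite, since \(\mathcal{O}_{\mathbb{P}_{\Sigma}}(-Y)\) is a line bundle (\(Y\) is Cartier),
\[
  \mathrm{H}^{d}(\mathbb{P}_{\Sigma},\mathcal{O}_{\mathbb{P}_{\Sigma}}(-Y))\cong \mathrm{H}^{0}(\mathbb{P}_{\Sigma},\mathcal{O}_{\mathbb{P}_{\Sigma}}(K+Y))^{\vee}.
\]

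Second, I would compute this \(\mathrm{H}^{0}\) by the Cox description of global sections (the cited \cite[Prop.~5.4.1]{cox-little-schneck:toric-varieties}): \(\mathrm{H}^{0}(\mathbb{P}_{\Sigma},\mathcal{O}(D))=S_{[D]}\) is the graded piece of the Cox ring, free over \(R\) with monomial basis indexed by the lattice points of the polytope \(P_{D}\). Writing \(Y\sim\sum_{\rho}b_{\rho}D_{\rho}\), so that \(\Delta=\{x:\langle x,v_{\rho}\rangle\ge -b_{\rho}\}\), the divisor \(K+Y\) has polytope \(P_{K+Y}=\{x:\langle x,v_{\rho}\rangle\ge 1-b_{\rho}\}\), whose lattice points are precisely \(\Delta^{\circ}\cap M\). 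Hence \(\mathrm{H}^{0}(\mathbb{P}_{\Sigma},\mathcal{O}(K+Y))\) is free of rank \(N=\#(\Delta^{\circ}\cap M)\) on the interior monomials, and dualizing gives that \(\mathrm{H}^{d}(\mathbb{P}_{\Sigma},\mathcal{O}(-Y))\) is free of rank \(N\). To turn the abstract dual into the stated module I would use the cup-product pairing valued in \(\mathrm{H}^{d}(\omega_{\mathbb{P}_{\Sigma}})=R\cdot(1/\prod_{\rho}z_{\rho})\): under it the dual basis is represented by the inverse monomials of \(\mathrm{Cl}(\Sigma)\)-degree \(-\beta\) all of whose exponents are \(\le -1\). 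Finally, identifying \(\mathcal{O}_{\mathbb{P}_{\Sigma}}(-Y)\) with the ideal sheaf \(I_{Y}\subseteq\mathcal{O}_{\mathbb{P}_{\Sigma}}\) via multiplication by the section \(\widetilde{f}\in S_{\beta}\) rewrites these representatives with \(\widetilde{f}\) in the numerator, producing the claimed module. I expect this \(\widetilde{f}\)-presentation, rather than the bare inverse-monomial one, to be the point of sketching the proof: replacing \(Y\) by \(\nu Y\) will produce \(\widetilde{f}^{\nu}\), and passing to the torus recovers the coefficients of \(f^{\nu-1}\) that define \(L_{v,w}\).

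The main obstacle — and the reason a bare appeal to Serre duality does not suffice — is carrying this out over a general flat \(\mathbb{Z}\)-algebra \(R\) rather than a field, and making the final identification explicit enough to reuse. For the base-ring issue I would note that the cohomology of torus-invariant sheaves is computed by a Čech (or Ishida) complex of free \(R\)-modules whose formation commutes with base change, so both the vanishing \(\mathrm{H}^{>0}(\mathbb{P}_{\Sigma},\mathcal{O})=0\) and the duality pairing descend from the algebraically closed field case by the flat base change already invoked preceding the statement. For the explicit identification I would in fact run the top Čech computation directly on the cover of \(\mathbb{P}_{\Sigma}\) by its maximal affine toric charts: \(\mathrm{H}^{d}(\mathbb{P}_{\Sigma},\mathcal{O}(-Y))\) is the cokernel of the last Čech differential, and I would check that the surviving classes are exactly the Laurent tails \(\widetilde{f}/\prod_{\rho}z_{\rho}\) times inverse monomials of total degree \(-\beta\). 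The delicate bookkeeping is to verify that no interior monomial is killed in this cokernel and that the resulting pairing with \(S_{\beta-\delta}\) is perfect over \(R\); this is precisely the combinatorial content of \cite[Prop.~5.4.1]{cox-little-schneck:toric-varieties} together with the toric trace, and it yields the explicit representatives needed in the proof of Lemma~\ref{lemma:formal-lie-cohomology}.
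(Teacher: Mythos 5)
Your proposal is correct and its operative part coincides with the paper's own proof: the paper likewise notes the Serre‑duality route (in the sentence preceding the lemma) but actually argues via the \v{C}ech-type computation on the toric charts, using the complex indexed by the cones of \(\Sigma\) — with terms \(\prod_{\sigma}\mathcal{F}(V_{\sigma})\) running from maximal cones down to \(\mathcal{F}(T)\) — so that the complex has length \(d+1\) and \(\mathrm{H}^{d}\) genuinely is the cokernel of the last differential, whereas for the literal alternating \v{C}ech complex on \(m>d+1\) maximal charts your ``cokernel of the last differential'' would instead compute the vanishing group \(\mathrm{H}^{m-1}\). Apart from that easily repaired indexing slip, your identification of the surviving classes as the inverse monomials of degree \(-\beta\) with all exponents \(\leq -1\), rewritten with \(\widetilde{f}\) in the numerator via \(\mathcal{O}_{\mathbb{P}_{\Sigma}}(-Y)\cong\widetilde{f}\cdot\mathcal{O}_{\mathbb{P}_{\Sigma}}(-\beta)\), is exactly the ``straightforward computation'' the paper leaves implicit and exactly the explicit representatives reused in Lemma~\ref{lemma:formal-lie-cohomology}.
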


\begin{proof}
  Let
  \(U_{\sigma} = \{z \in \mathbb{A}^{\Sigma(1)}: \widehat{z}_{\sigma}\neq 0\}\),
  Then \(U_{\sigma}\) is stable under the action of \(\mathbb{D}^{\ast}_{\sigma}\),
  and \(U_{\sigma\cap \sigma'} = U_{\sigma} \cap U_{\sigma'}\).
  Let \(V_{\sigma}\) the image of \(U_{\sigma}\) in \(\mathbb{P}_{\Sigma}\).
  Then
  \begin{equation}
    \label{eq:covering}
    \mathfrak{V}=\{V_{\sigma}:\sigma\text{ maximal cone in }\Sigma\}
  \end{equation}
  is an open covering of \(\mathbb{P}_{\sigma}\).

  Let \(\mathcal{F}\) be a quasi-coherent sheaf on \(\mathbb{P}_{\Sigma}\). Then
  the cohomology of \(\mathcal{F}\) can be computed using the (alternating) Čech
  cohomology with respect to the covering \(\mathfrak{V}\). The Čech complex is
  \begin{equation*}
    \prod_{\sigma\text{ maximal}} \mathcal{F}(V_{\sigma}) \to
    \prod_{\sigma\text{ codim }1} \mathcal{F}(V_{\sigma}) \to
    \cdots \to
    \prod_{\rho \in \Sigma(1)} \mathcal{F}(V_{\rho}) \to
    \mathcal{F}(T)
  \end{equation*}
  where \(T\) is the embedded torus.

  In our case, \(\mathcal{F}=\mathcal{O}_{\mathbb{P}_{\Sigma}}(-\beta)\). Then
  \(\mathcal{F}(V_{\sigma})=(S[\widehat{z}_{\sigma}^{-1}])_{-\beta}\) (the
  homogeneous piece of degree \(-\beta\)). The proposition then follows from a
  straightforward computation.
\end{proof}

Now we construct an explicit coordinate system for
\(\Phi^{d}(\mathbb{P}_{\Sigma},\mathcal{O}_{\mathbb{P}_{\Sigma}}(-X))\).

\begin{lemma}%
  \label{lemma:formal-lie-cohomology}
  Notation be as in~\textup{\ref{situation:notation-toric}}. Assume that \(R\)
  is noetherian.
  The formal group
  \(\Phi^{d}(\mathbb{P}_{\Sigma},\mathcal{O}_{\mathbb{P}_{\Sigma}}(-X))\) is a
  formal Lie group.
\end{lemma}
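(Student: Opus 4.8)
The plan is to show that \(\Phi^{d}(\mathbb{P}_{\Sigma},\mathcal{O}_{\mathbb{P}_{\Sigma}}(-X))\) meets the three conditions that, following Stienstra, force a (commutative) formal group functor to be a formal Lie group: formal smoothness, left exactness, and a finite free tangent space. Write \(\mathcal{I}=\mathcal{O}_{\mathbb{P}_{\Sigma}}(-X)\) and \(N=\#(\Delta^{\circ}\cap\mathbb{Z}^{d})\). Once these three conditions are in hand, the standard inductive lifting along square-zero extensions produces a coordinate system \(\widehat{\mathbb{G}}_{\mathrm{a}}^{N}\xrightarrow{\sim}\Phi^{d}(\mathbb{P}_{\Sigma},\mathcal{I})\), exhibiting it as a formal Lie group of dimension \(N\).

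First I would check formal smoothness. For a surjection \(A\twoheadrightarrow A'\) of nil \(R\)-algebras with kernel \(K\), the flatness of \(\mathcal{I}\) together with the exactness of \(\widehat{\mathbb{G}}_{\mathrm{m}}\) on short exact sequences of nil algebras (exactly as in the proof of Lemma~\ref{lemma:left-exactness-artin-mazur}) yields a short exact sequence of sheaves of abelian groups, and its long exact cohomology sequence contains
\[
  \Phi^{d}(\mathbb{P}_{\Sigma},\mathcal{I})(A)\longrightarrow \Phi^{d}(\mathbb{P}_{\Sigma},\mathcal{I})(A')\longrightarrow \mathrm{H}^{d+1}\bigl(\mathbb{P}_{\Sigma},\widehat{\mathbb{G}}_{\mathrm{m}}(\mathcal{I}\otimes_{R}K)\bigr).
\]
Since \(\mathbb{P}_{\Sigma}\) is a noetherian topological space of dimension \(d\), Grothendieck's vanishing theorem annihilates the last term, so the first arrow is surjective.

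Next I would feed Lemma~\ref{lemma:left-exactness-artin-mazur} the vanishing it requires. Because \(\Sigma\) is the normal fan of the full-dimensional polytope \(\Delta\), the class \(\beta\) of \(X\) is ample on the complete toric variety \(\mathbb{P}_{\Sigma}\), and the Kodaira-type vanishing theorem for ample divisors on complete toric varieties---which is combinatorial, holds in every characteristic, and is compatible with base change (cf.\ the remark preceding~\ref{situation:notation-toric})---gives \(\mathrm{H}^{i}(\mathbb{P}_{\Sigma},\mathcal{I})=0\) for \(i<d\), and hence \(\mathrm{H}^{d-1}(\mathbb{P}_{\Sigma}\otimes_{R}R/J,\mathcal{I})=0\) for every ideal \(J\subset R\). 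This is precisely the hypothesis of Lemma~\ref{lemma:left-exactness-artin-mazur} with \(i=d\), so \(\Phi^{d}(\mathbb{P}_{\Sigma},\mathcal{I})\) is left exact. For the tangent space, the vanishing of \(\mathrm{H}^{d+1}\) makes \(M\mapsto \mathrm{H}^{d}(\mathbb{P}_{\Sigma},\mathcal{I}\otimes_{R}M)\) right exact and compatible with direct sums, so by the Eilenberg--Watts theorem it is naturally \(t\otimes_{R}(-)\) with \(t=\mathrm{H}^{d}(\mathbb{P}_{\Sigma},\mathcal{I})\); Lemma~\ref{proposition:compute-cartier-divisor} then identifies \(t\) as a free \(R\)-module of rank \(N\).

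Having established smoothness, left exactness, and a finite free rank-\(N\) tangent space, I would invoke the representability criterion for smooth formal groups used by Stienstra to conclude that \(\Phi^{d}(\mathbb{P}_{\Sigma},\mathcal{I})\) is isomorphic, as a set-valued functor, to \(\widehat{\mathbb{G}}_{\mathrm{a}}^{N}\), i.e.\ it is a formal Lie group. I expect the genuine obstacle to lie in the left-exactness step: smoothness is automatic from the dimension bound, and the tangent space is handed to us by Lemma~\ref{proposition:compute-cartier-divisor}, whereas everything else rests on the characteristic-free toric vanishing \(\mathrm{H}^{d-1}(\mathbb{P}_{\Sigma}\otimes_{R}R/J,\mathcal{I})=0\) holding uniformly over all quotients \(R/J\).
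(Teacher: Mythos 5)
Your skeleton coincides with the paper's in its two essential inputs---left exactness of \(\Phi^{d}(\mathbb{P}_{\Sigma},\mathcal{I})\) via Lemma~\ref{lemma:left-exactness-artin-mazur} fed by the Batyrev--Borisov vanishing theorem, and the identification of the tangent space as free of rank \(N\) via Lemma~\ref{proposition:compute-cartier-divisor}---but your final step is genuinely different: you build the coordinate abstractly by successive lifting, which forces you to prove formal smoothness of the target functor. The paper instead writes down the coordinate map \(A^{N}\to\Phi^{d}(\mathbb{P}_{\Sigma},\mathcal{I})(A)\) explicitly, as the class of the \v{C}ech cocycle \(\sum_{\mathbf{m}}\frac{\widetilde{f}}{z^{\mathbf{m}}}\otimes a_{\mathbf{m}}\), and invokes Zink's criterion, which requires smoothness and exactness only of the \emph{source} \(A\mapsto A^{N}\) together with left exactness of the target and the tangent-space isomorphism. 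This both sidesteps your smoothness step entirely and produces the explicit coordinate on which Corollary~\ref{corollary:coord} (and hence the comparison with Vlasenko's \(F_{f}\)) depends; your version, if repaired, proves the bare statement of the lemma but not that identification.

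The genuine gap is in the smoothness argument. You kill \(\mathrm{H}^{d+1}\bigl(\mathbb{P}_{\Sigma},\widehat{\mathbb{G}}_{\mathrm{m}}(\mathcal{I}\otimes_{R}K)\bigr)\) by Grothendieck vanishing ``since \(\mathbb{P}_{\Sigma}\) is a noetherian topological space of dimension \(d\)''. It is not: \(\mathbb{P}_{\Sigma}\) is a \(d\)-dimensional scheme \emph{over} \(R\), so its Krull dimension is \(d+\dim R\), and \(R\) is an arbitrary noetherian flat \(\mathbb{Z}\)-algebra (the ring \(R'\) of Remark~\ref{remark:noetherian-is-ok}, for instance, can have large dimension). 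Grothendieck vanishing therefore only gives vanishing above degree \(d+\dim R\). The vanishing you need is nevertheless true, but must be earned differently: reduce to finitely generated, hence nilpotent, \(K\) using commutation of cohomology with filtered colimits; filter \(\widehat{\mathbb{G}}_{\mathrm{m}}(\mathcal{I}\otimes_{R}K)\) by the powers of \(K\), so that the graded pieces are the quasi-coherent sheaves \(\mathcal{I}\otimes_{R}(K^{j}/K^{j+1})\); and for quasi-coherent sheaves use the length-\(d\) \v{C}ech complex attached to the covering \(\mathfrak{V}\) from the proof of Lemma~\ref{proposition:compute-cartier-divisor} (equivalently, the bound on the cohomological dimension of the proper morphism \(\mathbb{P}_{\Sigma}\to\mathrm{Spec}(R)\) by its fibre dimension). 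With that repair, and granting that your ``inductive lifting along square-zero extensions'' is the standard smooth-plus-left-exact representability argument, the proof goes through.
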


\begin{proof}
  Following Stienstra, we prove this by constructing a coordinate.
  Let \(\widetilde{f}\) be the homogeneous equation that cuts out \(X\).

  We shall use the Čech cohomology to define an isomorphism (as functor of sets)
  between
  \(\Phi^{d}(\mathbb{P}_{\Sigma},\mathcal{O}_{\mathbb{P}_{\Sigma}}(-X))\)
  and the \(N\)-fold self-product of the formal
  additive group, where \(N\) is the \(R\)-rank of
  \begin{equation}
    \label{eq:ok}
    \left( \frac{\widetilde{f}}{\prod_{\rho\in\Sigma(1)}z_{\rho}}R[z_{\rho}^{-1}:\rho\in\Sigma(1)] \right)_{-\beta}.
  \end{equation}

  To organize the combinatorics, for each function
  \(\mathbf{m}: \Sigma(1) \to \mathbb{Z}_{\geq 1}\), write
  \begin{equation*}
    z^{\mathbf{m}} = \prod_{\rho\in\Sigma(1)}z_{\rho}^{\mathbf{m}(\rho)}.
  \end{equation*}
  Then an element in~\eqref{eq:ok} is written as
  \begin{equation*}
    \sum \lambda_{\mathbf{m}} \frac{F}{z^{\mathbf{m}}}.
  \end{equation*}
  Thus for each nil \(R\)-algebra \(A\), we identify \(A^{N}\) with the set of
  \(N\)-uples \((a_{\mathbf{m}})\). Then we define a map
  \begin{equation*}
    A^{N} \to \widehat{\mathbb{G}}_{\text{m}}(\mathcal{O}_{\mathbb{P}_{\Sigma}}(-X)\otimes_{R}A)(T)
  \end{equation*}
  sending \((a_{\mathbf{m}})\) to
  \begin{equation*}
    \sum_{\mathbf{m}} \frac{\widetilde{f}}{z^{\mathbf{m}}}\otimes a_{\mathbf{m}}
  \end{equation*}
  the summation being taken using the group structure of
  \[\widehat{\mathbb{G}}_{\text{m}}(\mathcal{O}_{\mathbb{P}_{\Sigma}}(-X)\otimes_{R}A)(T).\]
  We claim the composition
  \begin{equation*}
    A^{N} \to \widehat{\mathbb{G}}_{\text{m}}(\mathcal{O}_{\mathbb{P}_{\Sigma}}(-X)\otimes_{R}A)(T)
    \to \Phi^{d}(\mathbb{P}_{\Sigma},\mathcal{O}_{\mathbb{P}_{\Sigma}}(-X))(A)
  \end{equation*}
  is an isomorphism. Here, the second arrow is to take the cohomology class via
  the \v{C}ech complex used in the proof
  of~Lemma~\ref{proposition:compute-cartier-divisor}. The theorem then follows from
  this claim.

  Indeed, Lemma~\ref{proposition:compute-cartier-divisor} shows that this
  morphism (of set-valued functors on nil algebras) induces an isomorphism on
  the tangent space. Since the functor \(A\mapsto A^{N}\) is smooth and exact,
  the claim then follows
  from~\cite[Theorem~2.30]{zink:cartier-theory-formal-groups} in view of
  Lemma~\ref{lemma:left-exactness-artin-mazur}. The vanishing needed in
  Lemma~\ref{lemma:left-exactness-artin-mazur} is ensured by the
  Batyrev--Borisov vanishing
  theorem~\cite[Theorem~9.2.7]{cox-little-schneck:toric-varieties}. Since we
  have started with a polytope \(\Delta\), and the divisor \(X\) is linearly
  equivalent to the divisor \(D_{\Delta}\) described
  in~\cite[(4.2.7)]{cox-little-schneck:toric-varieties}, the amplitude needed in
  the vanishing is ensured
  by~\cite[Propnsition~6.1.10(a)]{cox-little-schneck:toric-varieties}. Although
  \cite[Theorem~9.2.7]{cox-little-schneck:toric-varieties} as stated requires
  the toric variety to be defined over the field of complex
  numbers, its proof is combinatorial and works for any field. The result over a
  base ring then follows from the theorem on cohomology and base change.
\end{proof}

\begin{situation}%
  \label{situation:formal-logarithm}
  \textbf{Construction of logarithm.} In this paragraph, we provide an explicit isomorphism between
  \(\Phi^{d}(\mathbb{P}_{\Sigma},\mathcal{O}_{\mathbb{P}_{\Sigma}}(-X))\) and a
  product of additive groups over \(R\otimes \mathbb{Q}\), following the method
  of Stienstra. For this purpose, we could replace \(R\) by the
  \(\mathbb{Q}\)-algebra \(R\otimes\mathbb{Q}\). Thus,
  \emph{in this paragraph, we will assume \(R\) is a noetherian \(\mathbb{Q}\)-algebra}.

  Recall that the usual logarithm defines, for each nil algebra \(A\) over
  \(R\), an isomorphism of abelian groups
  \begin{equation*}
    \ell(A) : \widehat{\mathbb{G}}_{\text{m}}(A) \to \widehat{\mathbb{G}}_{\text{a}}(A) ,
    \quad
    a \mapsto \sum_{n=1}^{\infty} \frac{1}{n}a^{n}
  \end{equation*}
  We shall use \(\ell_A\) to define an explicit isomorphism
  between
  \(\Phi^{d}(\mathbb{P}_{\Sigma},\mathcal{O}_{\mathbb{P}_{\Sigma}}(-X))\) and a
  product of formal additive group.

  We have the following commutative diagram
  \begin{equation*}
    \begin{tikzcd}
      & \widehat{\mathbb{G}}_{\text{m}}(\mathcal{O}_{\mathbb{P}_{\Sigma}}(-X)\otimes_{R}A)(T) \ar[r,"\ell(A)"]\ar[d]
      &  \widehat{\mathbb{G}}_{\text{a}}(\mathcal{O}_{\mathbb{P}_{\Sigma}}(-X)\otimes_{R}A)(T) \ar[d]\\
      A^{N} \ar[r,swap,"u(A)"] \ar[ru] & \Phi^{d}(\mathbb{P}_{\Sigma},\mathcal{O}_{\mathbb{P}_{\Sigma}}(-X))(A)
      \ar[r,swap,"\ell_X(A)"] \ar[r] & \widehat{\mathbb{G}}_{\text{a}}^{N}(A)
    \end{tikzcd}.
  \end{equation*}
  In the diagram, the right square is a diagram of abelian groups, whereas the
  left triangle is a diagram of sets. The vertical maps are ``taking the
  cohomology class'' of Čech cocycles. The map \(\ell_X(A)\) is induced by the
  logarithms on the Čech cochains with respect to the
  covering~\eqref{eq:covering} after taking cohomology. It is an isomorphism
  since \(\ell(A)\) induces chain level isomorphisms. The map \(u\) is the
  coordinate we constructed in the proof of
  Lemma~\ref{lemma:formal-lie-cohomology}. The composition \(\ell_X\circ u\) is
  then the ``coordinate representation'' of the formal logarithm of
  \(\Phi^{d}(\mathbb{P}_{\Sigma},\mathcal{O}_{\mathbb{P}_{\Sigma}}(-X))\).

  Let \(L_{X} = \ell_X \circ u\).
  By chasing the diagram, for each \((a_{\mathbf{m}})\) as in the proof of
  Lemma~\ref{lemma:formal-lie-cohomology}, we have
  \begin{equation*}
    L_{X}(a_{\mathbf{m}}) = \text{ the cohomology class of }\sum_{\mathbf{w}: \Sigma(1) \to \mathbb{Z}_{\geq 1}}
    \sum_{\nu=1}^{\infty} \frac{1}{\nu}\frac{\widetilde{f}^{\nu}}{z^{\nu\cdot \mathbf{w}}} \otimes a_{\mathbf{w}}^{\nu}.
  \end{equation*}
  To spell out the ``cohomology class'', we use
  Lemma~\ref{proposition:compute-cartier-divisor}. We should only look at those
  monomials in the expansions \(\widetilde{f}^{\nu-1}\) which has the following properties
  \begin{itemize}
  \item has degree \(\beta(\nu-1)\), thus \(\mathbf{w}\) must be of degree \(\beta\),
  \item part of the monomial ``cancels'' the denominator \(z^{\nu \mathbf{w}}\),
    and
  \item the rest part of the monomial produces a monomial \(z^{-\mathbf{v}}\)
    for some \(\mathbf{v}\) of degree \(\beta\).
  \end{itemize}
  Thus, to get the terms with contributions, we define, for an integer \(\nu\),
  and for maps \(\mathbf{v},\mathbf{w}:\Sigma(1)\to \mathbb{Z}_{\geq 1}\),
  \(\deg(\mathbf{v})=\deg(\mathbf{w})=\beta\).
  \begin{equation*}
    \beta_{\mathbf{v},\mathbf{w},\nu} = \text{coefficient of }z^{\nu\mathbf{w}-\mathbf{v}}
    \text{ in the expansion of } \widetilde{f}^{\nu-1}.
  \end{equation*}
  Then for each \(\mathbf{v}\) of degree \(\beta\), the
  contribution of \(\widetilde{f}/z^{\mathbf{v}}\) is given by
  \(\sum_{\nu,\mathbf{w}} \beta_{\mathbf{v},\mathbf{w},\nu}\otimes a_{\mathbf{w}}^{\nu}/\nu\).
  Therefore we can write the logarithm as
  \begin{equation*}
    L_{X}(a_{\mathbf{w}}) = (L_{\mathbf{v}}(a_{\mathbf{w}})),
  \end{equation*}
  where
  \begin{equation*}
    L_{\mathbf{v}}(a_{\mathbf{w}}:w :\Sigma(1)\to\mathbb{Z}_{\geq 1})
    = \sum_{\mathbf{w}} \sum_{\nu=1}^{\infty} \beta_{\mathbf{v},\mathbf{w},\nu} \frac{a_{\mathbf{w}}^{\nu}}{\nu}
  \end{equation*}
  The formal group law for the formal group we constructed is
  therefore
  \begin{equation*}
    F(X,Y) = L_{X}^{-1}(L_{X}(X_{\mathbf{v}}) + L_{X}(Y_{\mathbf{w}}))
  \end{equation*}
  which is a matrix of formal power series in
  \(X_{\mathbf{v}}, Y_{\mathbf{w}}\).
\end{situation}

\begin{situation}%
  \label{situation:finish-the-proof}
  In this paragraph we finish the proof of Theorem~\ref{theorem:integrality}.
  By Remark~\ref{remark:noetherian-is-ok}, we can assume \(R\) is a noetherian
  ring flat over \(\mathbb{Z}\). In this case, we shall
  prove the formal logarithms defined in~\ref{situation:formal-logarithm}
  agrees with the series~\eqref{eq:formal-logarithm}.

  Recall that we begin with the Newton polytope \(\Delta\) of the Laurent
  polynomial \(f\). The normal fan \(\Sigma\) of \(\Delta\) defines
  \(\mathbb{P}_{\Sigma}\) and the lattice gives rise to a torus invariant
  relative Cartier divisor
  \(D=\sum_{\rho\in\Sigma(1)}a_{\rho}D_{\rho}\)~\cite[Equation~(4.2.7)]{cox-little-schneck:toric-varieties}. The
  lattice \(\Delta\) can be recovered from the numbers \(a_{\rho}\)
  \begin{equation*}
    \Delta = \{w \in \mathbb{R}^{d} : \langle w ,u_{\rho} \rangle \geq -a_{\rho}, \forall \rho \in \Sigma(1)\},
  \end{equation*}
  by \cite[Proposition~6.1.10, Theorem~6.2.1, Exercise~4.3.1]{cox-little-schneck:toric-varieties},
  and the Laurent polynomial can be viewed as a section of the Cartier divisor
  \(\mathcal{O}_{\mathbb{P}_{\Sigma}}(D)\). Moreover,
  \begin{equation*}
    \mathrm{H}^{0}(\mathbb{P}_{\Sigma},\mathcal{O}_{\Sigma}(D))\cong R^{\Delta\cap \mathbb{Z}^{d}},
  \end{equation*}
  by \cite[Example~4.3.7]{cox-little-schneck:toric-varieties}. Thus the
  information about \(\Delta\) is equivalent to the information about \(\Sigma\)
  and the relative Cartier divisor \(D\).

  Next, write the Laurent polynomial \(f\) into
  \(\sum_{u\in\Delta\cap\mathbb{Z}^{d}}x_{u}t^{u}\). Then the Laurent monomial
  \(t^{u}\) corresponds to the monomial
  \begin{equation*}
    z^{\langle u,D \rangle} = \prod_{\rho\in\Sigma(1)} z_{\rho}^{\langle u,u_{\rho} \rangle+ a_{\rho}}
  \end{equation*}
  where \(u_{\rho}\) is the smallest generator of the ray \(\rho \in \Sigma(1)\)
  in the dual space of \(\mathbb{R}^{d}\). Thus \(f\) gives rise to an element
  \begin{equation*}
    \widetilde{f}(z) = \sum x_{u} z^{\langle u,D \rangle}
  \end{equation*}
  in the Cox ring \(S\).
  It is homogeneous of degree \(\beta=[D]\) in \(S\),
  and defines an effective Cartier divisor \(X\) on
  \(\mathbb{P}_{\Sigma}\).
  Moreover, the correspondence \(f \leftrightarrow \widetilde{f}\) establishes a bijection
  \begin{equation*}
    \mathrm{H}^{0}(\mathbb{P}_{\Sigma},\mathcal{O}_{\sigma}(D))\cong S_{\beta},
  \end{equation*}
  see~\cite[Proposition~5.4.1(b)]{cox-little-schneck:toric-varieties}. Under this
  correspondence, the expansion coefficients in \(\widetilde{f}^{m}\) are the same as the
  expansion coefficients of \(f^{m}\) since for the monomial
  \(\mathbf{w}:\Sigma(1)\to\mathbb{Z}_{\geq 1}\), homogeneous of degree
  \(\beta\), \(z^{\mathbf{w}}\) corresponds
  to \(t^{w}\) for some \(w \in \Delta \cap \mathbb{Z}^{d}\). This finishes the
  proof of Theorem~\ref{theorem:integrality}.\qed
\end{situation}

\begin{corollary}[of the proof of Theorem~\ref{theorem:integrality}]%
  \label{corollary:coord}
  Assume that \(R\) is noetherian. The formal group law \(F_{f}\) considered by
  Vlasenko~\eqref{eq:formal-group-law} is a coordinate system of the formal
  group functor
  \(\Phi^{d}(\mathbb{P}_{\Sigma},\widehat{\mathbb{G}}_{\mathrm{m}}(\mathcal{O}_{\mathbb{P}_{\Sigma}}(-X)))\).
\end{corollary}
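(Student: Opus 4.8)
The plan is to recognize that the proof of Theorem~\ref{theorem:integrality} has already produced everything the corollary asserts, and to repackage it in the coordinate-system language of \S\ref{situation:formal-group}. First I would recall that the proof of Lemma~\ref{lemma:formal-lie-cohomology} does more than assert that \(\Phi^{d}(\mathbb{P}_{\Sigma},\mathcal{O}_{\mathbb{P}_{\Sigma}}(-X))\) is a formal Lie group: it exhibits an explicit isomorphism of set-valued functors, natural in the nil \(R\)-algebra \(A\),
\[
  u : \widehat{\mathbb{G}}_{\mathrm{a}}^{N} \xrightarrow{\;\sim\;}
  \Phi^{d}(\mathbb{P}_{\Sigma},\mathcal{O}_{\mathbb{P}_{\Sigma}}(-X)),
  \qquad
  (a_{\mathbf{m}}) \longmapsto
  \Big[\textstyle\sum_{\mathbf{m}} \tfrac{\widetilde{f}}{z^{\mathbf{m}}}\otimes a_{\mathbf{m}}\Big].
\]
By the dictionary of \S\ref{situation:formal-group}, such a \(u\) is by definition a coordinate system, and transporting the group structure of the target along \(u\) yields a formal group law \(F_{u}(x,y)\). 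The point worth flagging is that \(F_{u}\) has coefficients in \(R\), not merely in \(R\otimes\mathbb{Q}\), because \(u\) is a morphism of functors on nil \(R\)-algebras. It therefore remains only to identify \(F_{u}\) with Vlasenko's \(F_{f}\) of~\eqref{eq:formal-group-law}.

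To carry this out I would pass to \(R\otimes\mathbb{Q}\), where \(F_{u}\) can be linearized. The commutative diagram of \S\ref{situation:formal-logarithm} shows that \(L_{X}=\ell_{X}\circ u\) is the formal logarithm of \(F_{u}\), so that \(F_{u}(x,y)=L_{X}^{-1}(L_{X}(x)+L_{X}(y))\) over \(R\otimes\mathbb{Q}\). Comparing this with~\eqref{eq:formal-group-law}, the corollary reduces to the single identity \(L_{X}=L\), where \(L\) is the series~\eqref{eq:formal-logarithm}. But this is exactly what \S\ref{situation:finish-the-proof} establishes: the toric dictionary \(f \leftrightarrow \widetilde{f}\) matches the index set of \(u\) (degree-\(\beta\) monomials \(z^{\mathbf{v}}\) with \(\mathbf{v}:\Sigma(1)\to\mathbb{Z}_{\geq 1}\)) with \(\Delta^{\circ}\cap\mathbb{Z}^{d}\), and identifies the structure constants \(\beta_{\mathbf{v},\mathbf{w},\nu}\) (coefficients of \(z^{\nu\mathbf{w}-\mathbf{v}}\) in \(\widetilde{f}^{\,\nu-1}\)) with Vlasenko's \(\beta_{v,w,\nu}\) (coefficients of \(t^{\nu w-v}\) in \(f^{\nu-1}\)).

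Granting \(L_{X}=L\), I conclude \(F_{u}=L^{-1}(L(x)+L(y))=F_{f}\) as power series over \(R\otimes\mathbb{Q}\); since \(R\) is flat over \(\mathbb{Z}\), the inclusion \(R\hookrightarrow R\otimes\mathbb{Q}\) upgrades this to an equality in \(R[\![x,y]\!]\). Thus the coordinate system \(u\) of \(\Phi^{d}(\mathbb{P}_{\Sigma},\widehat{\mathbb{G}}_{\mathrm{m}}(\mathcal{O}_{\mathbb{P}_{\Sigma}}(-X)))\) is precisely the one whose associated formal group law is \(F_{f}\), which is the assertion. I expect no genuinely new mathematical content beyond Theorem~\ref{theorem:integrality}; the only steps requiring attention are the combinatorial matching of index sets and structure constants already performed in \S\ref{situation:finish-the-proof}, and the bookkeeping observation that \(u\), hence \(F_{u}\), is defined over \(R\) even though the linearization \(L_{X}\) exists only over \(R\otimes\mathbb{Q}\).
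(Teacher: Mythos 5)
Your proposal is correct and follows the paper's own route exactly: the corollary is read off from the coordinate system \(u\) constructed in the proof of Lemma~\ref{lemma:formal-lie-cohomology}, the logarithm computation of \S\ref{situation:formal-logarithm}, and the identification \(L_{X}=L\) carried out in \S\ref{situation:finish-the-proof}. The only content you add beyond the paper's implicit argument is the explicit bookkeeping remark that \(F_{u}\) is \emph{a priori} defined over \(R\) while the linearization lives over \(R\otimes\mathbb{Q}\), which is accurate and harmless.
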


\textit{From now on, we shall use \(\Phi_{f}\) to denote the formal Lie group over a
ring \(R\) determined by the formal group law \(F_{f}\).}

\medskip
The integrality of \(\Phi_{f}\) could be used to explain the integrality of some
formal group laws considered by Professor
T.~Honda~\cite{honda:formal-groups-from-hypergeometric-functions}.
Let \(N\) be an integer. Honda considered the
generalized hypergeometric ordinary differential equation
\begin{equation}
  \label{eq:honda}
  \left(\tau^{N}\prod_{\theta\in S} (\delta + N\theta) - \delta^{|S|}\right)g(\tau) = 0.
\end{equation}
where \(\delta=\tau\partial_{\tau}\), \(S\) is a subset of
\(\{1/N,\ldots,(N-1)/N\}\). Let \(g(\tau)=\sum_{n\geq 0} A(n)\tau^{Nn}\) be the
generalized hypergeometric function which is the only solution
to~\eqref{eq:honda} at \(0\). Set \(f(x) = \int_{0}^{x}g(\tau)\mathrm{d}\tau\).

\begin{theorem}[Honda~\cite{honda:formal-groups-from-hypergeometric-functions}]%
  \label{situation:honda-theorem}
  Suppose that \(\{N\theta : \theta \in S\}\) contains all the reduced residues
  mod \(N\), then \(F(x,y)=f^{-1}(f(x)+f(y))\), a priori a rational power
  series, actually lies in \(\mathbb{Z}_{(p)}[x,y]\), for every \(p > N\).
\end{theorem}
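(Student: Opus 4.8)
The plan is to realize Honda's formal group law $F$ as one of Vlasenko's formal group laws $F_{\hat f}$ attached to a suitable Laurent polynomial $\hat f$, and then to invoke Theorem~\ref{theorem:integrality}. The starting observation is that, for a one-dimensional formal group, $f(x)=\int_0^x g(\tau)\,\mathrm{d}\tau$ is nothing but the formal logarithm of $F$, since $F(x,y)=f^{-1}(f(x)+f(y))$. Thus it suffices to exhibit a Laurent polynomial $\hat f$, defined over $\mathbb{Z}_{(p)}$ for $p>N$, whose Newton polytope is full-dimensional with a single interior lattice point, and whose Vlasenko logarithm \eqref{eq:formal-logarithm} attached to that interior point equals $f$. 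Once this is done, Theorem~\ref{theorem:integrality} applied over $R=\mathbb{Z}_{(p)}$ gives $F=F_{\hat f}\in\mathbb{Z}_{(p)}[\![x,y]\!]$ directly.

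First I would solve Honda's recursion explicitly. Writing $g=\sum_n A(n)\tau^{Nn}$ and substituting into \eqref{eq:honda} yields $A(n)=A(n-1)\prod_{\theta\in S}(n-1+\theta)/n^{|S|}$, so that $A(n)=\prod_{\theta\in S}(\theta)_n/(n!)^{|S|}$, where $(\theta)_n$ is the Pochhammer symbol. In the cleanest situation, where $\{N\theta:\theta\in S\}$ is the \emph{full} set $\{1,\dots,N-1\}$, this collapses to the closed form $A(n)=\frac{(Nn)!}{(n!)^N}N^{-Nn}$. The matching Laurent polynomial is $\hat f = t_1+\cdots+t_{N-1}+N^{-N}(t_1\cdots t_{N-1})^{-1}$, whose Newton polytope is the full-dimensional (reflexive) simplex with unique interior lattice point $0$, so that the associated formal group is one-dimensional. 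A direct multinomial computation shows that the constant term of $\hat f^{m}$ vanishes unless $N\mid m$, and equals $A(m/N)$ when $m=Nn$; hence the Vlasenko logarithm $L_0$ attached to the interior point $0$ is exactly $f$, and $F=F_{\hat f}$.

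For general $S$ required only to contain the reduced residues mod $N$, no such closed form is available, and here I would argue as outlined in the introduction. The constant-term generating series $\Pi(\tau)=\sum_m c_m\tau^m$, with $c_m$ the constant term of $\hat f^m$, is the unique solution holomorphic at $0$ of the Picard--Fuchs equation of an underdiagram deformation $\hat f$ of the corresponding toric hypersurface; and by construction $L_0=\int\Pi=\int\sum_m c_m\tau^m$ is the Vlasenko logarithm. The task is to match this Picard--Fuchs operator with Honda's operator in \eqref{eq:honda}. Since both equations carry a one-dimensional space of solutions holomorphic at the origin, the normalized solutions coincide, giving $\Pi=g$ and $L_0=f$ once more, whence $F=F_{\hat f}$ and the integrality follows from Theorem~\ref{theorem:integrality} as before.

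The hypothesis $p>N$ enters, and is thereby explained, precisely at the last step: the coefficients of $\hat f$ carry denominators built from $N$ and from the integers at most $N$ occurring among the $N\theta$, so $\hat f$ is defined over $\mathbb{Z}_{(p)}$ exactly once $p$ exceeds $N$ (in the Dwork case above the only denominator is a power of $N$, and one in fact obtains integrality for all $p\nmid N$). I expect the main obstacle to be the middle step for general $S$: pinning down the correct underdiagram deformation $\hat f$ and proving that its Picard--Fuchs equation is Honda's hypergeometric equation, equivalently that the diagonal coefficients of the powers of $\hat f$ obey Honda's recursion. Checking that the Newton polytope stays full-dimensional with a single interior lattice point, so that Theorem~\ref{theorem:integrality} produces a one-dimensional formal group, is a secondary but necessary verification.
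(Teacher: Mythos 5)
The paper does not actually reprove this statement: it is quoted from Honda's article, and the surrounding text only ``indicates'' how the integrality can be explained by Theorem~\ref{theorem:integrality}, working the correspondence out in two specific examples (Example~\ref{example:use-gkz}). Your proposal follows exactly that indicated route --- realize Honda's logarithm as a Vlasenko logarithm of an underdiagram deformation, match the hypergeometric ODE with a Picard--Fuchs operator extracted from a GKZ system, and invoke Theorem~\ref{theorem:integrality} --- and your treatment of the case $\{N\theta:\theta\in S\}=\{1,\dots,N-1\}$ is correct and agrees with the paper's first example (your coefficient $N^{-N}$ plays the role of the paper's rescaling $\tau\mapsto(-d)\tau$, and is exactly what restricts the conclusion to $p\nmid N$).

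As a proof of Honda's theorem, however, the proposal has a genuine gap, and it is the one you flag yourself. Honda's hypothesis only requires $\{N\theta:\theta\in S\}$ to \emph{contain} the reduced residues mod $N$, so $S$ ranges over many sets; for a general such $S$ you never produce a Laurent polynomial $\hat f$ over $\mathbb{Z}_{(p)}$ whose constant-term sequence realizes $A(n)=\prod_{\theta\in S}(\theta)_n/(n!)^{|S|}$, nor do you verify that the Picard--Fuchs operator of the corresponding underdiagram deformation is Honda's operator~\eqref{eq:honda}. The paper's Example~\ref{example:use-gkz} only covers those $S$ arising from weight systems $(q_1,\dots,q_n)$ with $q_i\mid N$ and $\sum q_i=N$, which is strictly narrower than Honda's hypothesis. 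Even where a candidate $\hat f$ exists, two further points need care: Lemma~\ref{lemma:formal-derivative-solution-picard-fuchs} only says the constant-term series is \emph{a} solution of the Picard--Fuchs system, so to conclude it equals $g$ you must know the holomorphic-at-$0$ solution space is one-dimensional and check normalizations (the GKZ operator only \emph{factors} through $\mathcal{L}_{\mathrm{PF}}$); and for general $S$ the denominators of $\hat f$ need not divide a power of $N$, so the assertion that $\hat f$ is defined over $\mathbb{Z}_{(p)}$ precisely when $p>N$ is unproved. In short, the strategy is the paper's, and it succeeds in the cases both you and the paper check, but it does not yet prove the theorem as stated --- which is why the paper attributes the result to Honda rather than deriving it from Theorem~\ref{theorem:integrality}.
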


Let us indicate how Honda's integrality is related to the integrality of
\(F_{f}\).

\begin{situation}%
  \label{situation:subdiagram-deformation}
  In the sequel, we assume \(R\) is flat over \(\mathbb{Z}\), and assume we have
  fixed an embedding of \(R\otimes \mathbb{Q}\) into \(\mathbb{C}\).
  Let \(f\) be a Laurent polynomial with Newton polytope \(\Delta\).

  For each \(w \in \Delta^{\circ} \cap \mathbb{Z}^{d}\), consider the
  1-parameter family of hypersurfaces in the \(\mathbb{P}_{\Sigma}\)
  \begin{equation*}
    X_{w}(\tau) = \overline{\text{Zeros}(t^{w} + \tau f(t))} \subset \mathbb{P}_{\Sigma},
  \end{equation*}
  called an ``underdiagram deformation''. This deformation then determines a
  ``Picard--Fuchs system'' on \(\mathbb{A}^{1}_{\mathbb{C}}\). By definition, a
  differential operator is said to be a Picard--Fuchs operator if it annihilates
  the cohomology class of a differential form on the generic \(X_{w}\). The
  Picard--Fuchs system is the cyclic \(\mathscr{D}\)-module obtained by dividing
  the left ideal generated by Picard--Fuchs operators.
\end{situation}

The relation between solutions of differential equations and the formal group
\(\Phi_{f}\) is based on the following trivial observation.

\begin{lemma}%
  \label{lemma:formal-derivative-solution-picard-fuchs}
  In the situation above, the formal power series
  \begin{equation*}
    \sum_{\nu=1}^{\infty} \beta_{v,w,\nu}\tau^{\nu-1}
  \end{equation*}
  (the derivative of a series appears in the formal logarithms of \(F_{f}\)
  considered in~\textup{\ref{situation:formal-logarithm}})
  is a formal power series solution to the Picard--Fuchs system around
  \(\tau=0\).
\end{lemma}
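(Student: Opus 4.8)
The plan is to recognize the series $\sum_{\nu\ge1}\beta_{v,w,\nu}\tau^{\nu-1}$ as a torus period integral of the deformation $X_w$, and then to exploit the two soft properties of the ``constant term'' functional that make it kill Picard--Fuchs images. First I would do the elementary geometric-series computation. Writing $t^{w}-\tau f(t)=t^{w}\bigl(1-\tau t^{-w}f(t)\bigr)$ and expanding,
\[
  \frac{t^{v}}{t^{w}-\tau f(t)} \;=\; \sum_{k\ge 0}\tau^{k}\,t^{\,v-(k+1)w}\,f(t)^{k},
\]
so the coefficient of $t^{0}$ is $\sum_{k\ge0}\tau^{k}\beta_{v,w,k+1}=\sum_{\nu\ge1}\beta_{v,w,\nu}\tau^{\nu-1}$, because the constant term forces $f^{k}$ to contribute the monomial $t^{(k+1)w-v}$. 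Writing $\mathrm{CT}$ for ``extract the constant term in $t$'' and $\omega=\frac{\mathrm{d}t_1\cdots\mathrm{d}t_d}{t_1\cdots t_d}$, this identifies our series with $\mathrm{CT}\bigl(t^{v}/(t^{w}-\tau f)\bigr)$, i.e.\ with the period $\frac{1}{(2\pi i)^{d}}\oint_{T}\frac{t^{v}}{t^{w}-\tau f}\,\omega$ over the real torus $T=\{|t_i|=r_i\}$, a $d$-cycle in the complement of the hypersurface for small $\tau$. The sign discrepancy between $t^{w}-\tau f$ here and $t^{w}+\tau f$ in the definition of $X_w(\tau)$ is absorbed by the base automorphism $\tau\mapsto-\tau$, under which the Picard--Fuchs $\mathscr{D}$-module is carried to the one attached to the family with the opposite sign; I would record this once and then ignore it.

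Next I would isolate the two formal facts that do all the work. First, $\mathrm{CT}$ is $\mathbf{C}[\tau]$-linear and commutes with any differential operator $P=P(\tau,\partial_\tau)$ in $\tau$ alone, since such $P$ does not touch the variables $t_i$; hence $\mathrm{CT}\bigl(P\cdot\tfrac{t^{v}}{t^{w}-\tau f}\bigr)=P\cdot\mathrm{CT}\bigl(\tfrac{t^{v}}{t^{w}-\tau f}\bigr)$. Second, $\mathrm{CT}$ annihilates exact forms: with $\delta_i=t_i\partial_{t_i}$ one has $\mathrm{CT}(\delta_i h)=0$ for every Laurent series $h$, because $\delta_i$ multiplies the coefficient of $t^{m}$ by $m_i$ and so kills the $m=0$ term, and in the Dwork--Griffiths presentation of the algebraic de Rham cohomology of the torus-complement of $X_w$ the exact forms are exactly the $R$-span of the $\delta_i$-derivatives $\delta_i(h)\,\omega$. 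Granting these, the conclusion is formal: if $P$ is a Picard--Fuchs operator, i.e.\ it annihilates the de Rham class of the residue form $\frac{t^{v}}{t^{w}-\tau f}\,\omega$ on the generic $X_w$, then $P\cdot\bigl(\tfrac{t^{v}}{t^{w}-\tau f}\bigr)\omega$ is exact, hence a sum of $\delta_i$-derivatives, and therefore
\[
  P\cdot\Bigl(\textstyle\sum_{\nu}\beta_{v,w,\nu}\tau^{\nu-1}\Bigr)
  = \mathrm{CT}\Bigl(P\cdot\tfrac{t^{v}}{t^{w}-\tau f}\Bigr)=0.
\]
Since $P$ was arbitrary, the series is a formal solution of the Picard--Fuchs system.

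The main obstacle is the second formal fact, namely reconciling the paper's abstract definition of a Picard--Fuchs operator (one annihilating a cohomology class on the generic fiber $X_w$) with the concrete Dwork complex on the torus-complement, so that ``$P\cdot(\text{form})$ is zero in cohomology'' literally becomes ``$P\cdot(\text{form})$ is a sum of $\delta_i$-derivatives.'' This requires the Poincaré residue identification of $\mathrm{H}^{d-1}(X_w)$ with the primitive part of $\mathrm{H}^{d}$ of the complement, together with the explicit Griffiths--Dwork pole reduction for toric hypersurfaces; once this dictionary is in place, everything else is the bookkeeping above. A pleasant feature of routing the argument through $\mathrm{CT}$ rather than through an analytic integral is that the commutation and annihilation properties are purely algebraic, so one never needs to discuss convergence of the period or verify directly that $T$ is a cycle — the series identity and the $\delta_i$-exactness suffice.
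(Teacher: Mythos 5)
Your proposal is correct and follows essentially the same route as the paper: both identify \(\sum_{\nu}\beta_{v,w,\nu}\tau^{\nu-1}\) with the constant term (equivalently, the torus period) of \(t^{v}/(t^{w}\pm\tau f)\) via the geometric-series expansion, and both conclude by observing that this functional kills Picard--Fuchs images. The only difference is cosmetic: the paper integrates the form over the standard torus cycle and invokes that the class satisfies the Picard--Fuchs equation ``by fiat,'' whereas you algebraize the same step by noting that \(\mathrm{CT}\) commutes with operators in \(\tau\) and annihilates \(\delta_i\)-exact forms.
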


\begin{proof}
  For each \(v \in \Delta^{\circ}\cap \mathbb{Z}^{d}\), we have a standard volume
  form on the torus given by
  \begin{equation*}
    \Theta_{v} = t^{v} \frac{\mathrm{d}t_1}{t_1}\wedge \cdots \wedge\frac{\mathrm{d}t_d}{t_d}
  \end{equation*}
  Then we know the cohomology class (in the logarithmic cohomology of the
  complement of \(t^w + \tau f = 0\))
  \begin{equation*}
    \frac{\Theta_v}{t^w + \tau f(t)}
  \end{equation*}
  (or the residue of it) satisfies the Picard--Fuchs equation by fiat. We then
  integrate the form along the standard top homology cycle of \(T\) to get an
  expansion with respect to \(\tau\). This is equivalent to taking the degree
  \(0\) term of the fraction
  \begin{align*}
    t^{v-w}(1+\tau t^{-w}f(t))^{-1}
    &= t^{v-w}\sum_{\nu=1}^{\infty}\tau^{\nu-1}t^{-(\nu-1)w}f(t)^{\nu-1} \\
    &= t^{v-w}\sum_{\nu=1}^{\infty} \tau^{\nu-1} t^{-(\nu-1)w}\sum_{w_1,\ldots,w_{\nu-1}} x_{w_1}\cdots x_{w_{\nu-1}} t^{w_1+\cdots + w_{\nu-1}}.
  \end{align*}
  Hence the constant term is
  \begin{equation*}
    \sum_{\nu} \sum_{\substack{w_1,\ldots,w_{\nu-1}\\\sum w_i = \nu v-w}} x_{w_1}\cdots x_{w_{\nu-1}} \tau^{\nu-1} =\sum_{\nu=1}^{\infty} \beta_{v,w,\nu} \tau^{\nu-1}.
  \end{equation*}
  as desired.
\end{proof}

It is not very easy to produce the precise ordinary differential equations
directly from the Laurent polynomial \(f\) we start with. Relatively easier is
to produce ordinary differential equations whose solutions \emph{contain} the
``period integrals'' using the so-called GKZ
systems~\cite{gelfand-kapranov-zelevinski:hypergeometric-functions-and-toric-varieties}.

In the following example, the GKZ system is simple enough so that we can
easily get the Picard--Fuchs equations out of them.

\begin{example}%
  \label{example:use-gkz}
  We consider the polytope \(\Delta\) generated by (minimal) lattice points
  \(u_1,\ldots,u_{n}\) in \(\mathbb{Z}^{n}\) subject to the only relation
  \begin{equation*}
    \sum_{i=1}^{n} q_{i}u_{i} = 0.
  \end{equation*}
  Note that \(\Delta\) is not the polytope of a weighted projective space.
  It is its \emph{face fan} that defines the weighted projective space
  \(\mathbb{P}(q_1,\ldots,q_n)\). Let \(N=\sum q_i\). We assume that
  \(q_{i}\mid{N}\) so that \(\Delta\) is a reflexive polytope. We shall
  consider the GKZ system associated with the matrix
  \begin{equation*}
    A =
    \begin{bmatrix}
      1 & 1 & 1 & \cdots &1 \\
      | & | & | & \cdots &| \\
      0 & u_1 & u_1 &\cdots & u_n \\
      | & | & | & \cdots &|
    \end{bmatrix}.
  \end{equation*}
  The GKZ system is a \(\mathscr{D}\)-module on the ``space of coefficients''
  which is the affine space \(\mathbb{A}^{n+1}\) with coordinate system
  \((a_0,a_1,\ldots,a_n)\). Each point \((a_0,\ldots,a_n)\) corresponds to a
  Laurent polynomial \(a_0 + \sum_{i=1}^{n} a_i t^{u_i}\). To get the desired
  Picard--Fuchs equation, we shall descend the ``modified box operator''
  \begin{equation*}
    \square a_{0}^{-1} = \partial_0^{N} a_{0}^{-1} - \prod_{i=1}^{n}\partial_{i}^{q_i}a_{0}^{-1}
  \end{equation*}
  via \(\tau^{N} = \prod_{i=1}^{n} a^{q_i}_{i}/a^{N}_0\),
  to the underdiagram deformation
  \(1 + \tau \sum_{i=1}^{n} t^{u_i}\)
  (\(a_{0}^{-1}\tau^{N}\) is killed by all the ``Euler operators'' in the GKZ system
  with parameter \((-1,0,\ldots,0)\)). By computation we get
  \begin{equation*}
    \mathcal{L}_{\text{GKZ}} = (-\tau)^{N} \prod_{i=1}^{N}(\delta+i) - \prod_{i=1}^{n}\prod_{j=0}^{q_{i}-1}\left(\frac{q_i}{N}\delta-j\right)
    \quad (\delta = \tau\partial_{\tau}).
  \end{equation*}
  Using the commuting relation
  \(\tau^{N} \delta = \delta \tau^{N} - N\tau^{N}\), we have a factorization
  \(\mathcal{L}_{\mathrm{GKZ}} = P(\delta) \mathcal{L}_{\mathrm{PF}}\).

  We thus get an ordinary differential equation \(\mathcal{L}_{\text{PF}}g(\tau)=0\).
  Sometimes, after scaling the variable \(\tau \to \pm N\tau\),
  \(\mathcal{L}_{\mathrm{PF}}\) changes to a
  differential operator considered by Honda. In these situations,
  by Lemma~\ref{lemma:formal-derivative-solution-picard-fuchs} above, the unique
  power series solution of \(\mathcal{L}_{\text{PF}}g(\tau) = 0\) should give
  rise to an \emph{integral formal group law} (Theorem~\ref{theorem:integrality}), and
  will imply Honda's integrality, while Honda's
  formal group law is only integral in \(\mathbb{Z}_{(p)}\) for \(p \nmid N\)
  due to the scaling of \(\tau\).

  We give two special cases illustrating our point.
  The first is to consider the Laurent polynomial
  \(f(t) = \sum_{i=1}^{d-1}t_{i} + \frac{1}{t_1\cdots t_{d-1}}\), which corresponds
  to the case \((q_1,\ldots,q_{d})=(1,1\ldots,1)\).
  Then the only underdiagram deformation is given by \(1 + \tau f(t)\).
  From the GKZ system we can infer an ordinary differential equation
  \begin{equation*}
    \mathcal{L}_{\text{GKZ}}=(-d)^{d}\tau^{d} \prod_{i=1}^{d} (\delta + i) - \delta^{d}
    = \delta \cdot ((-d)^{d}(\delta-1) \cdots (\delta-d+1)\tau^{d} - \delta^{d-1}).
  \end{equation*}
  This differential operator has a factor
  \[\mathcal{L}_{\text{PF}}=(-d)^{d}\tau^{d}(\delta+1)\cdots(\delta+d-1)-\delta^{d-1}.\]
  This is almost of Honda's type. The difference is a constant factor
  The related equation considered by Honda
  is obtained from \(\mathcal{L}_{\text{PF}}g(\tau)=0\) by making a substitution
  \(\tau\leftrightarrow (-d)\tau\). Therefore the related Honda's formal group
  law is only \(\mathbb{Z}_{(p)}\)-integral for those \(p \nmid d\).

  As another example, consider \(N=4\), and \((q_1,q_2,q_3)=(1,1,2)\). The above
  method produces the operator
  \begin{equation*}
    \mathcal{L}_{\mathrm{PF}}=4^4\tau^{4}(\delta+1)(\delta+3) - \delta^2.
  \end{equation*}
  which is related to Honda's under the correspondence
  \(\tau \leftrightarrow 4\tau\).
\end{example}

\section{Recovering unit-root Frobenius when Hasse--Witt is invertible}
\label{sec:recover-frobenius}
Let \(p\) be a prime number. Let \(R\), \(f\), \(\Delta\), and \(\Sigma\) be as
in~\ref{situation:notation-toric}. We assume in addition that \(R\) admits a
lift \(\sigma\) of the absolute Frobenius of \(R/p\). Let us consider
\begin{equation}
  \label{eq:vlasenko-matrix}
  (\alpha_{s})_{u,v \in \Delta^{\circ} \cap \mathbb{Z}^{d}} =
  \text{the coefficient of } t^{p^{s}v-u}\text{ in }f^{p^{s}-1}.
\end{equation}
In~\cite{vlasenko:higher-hasse-witt}, these matrices were called ``higher
Hasse--Witt matrices'' of \(f\). Vlasenko proved the following result
concerning these matrices.

\begin{proposition}[Vlasenko~{[loc.~cit.]}]
  \label{proposition:vlasenko-congruences}
  Let notation be as above. Then we have
  \begin{enumerate}
  \item For every \(s \geq 1\),
    \(\alpha_s\equiv\alpha_1\cdot\alpha_1^{\sigma}\cdots\alpha_1^{\sigma^{s-1}}\mod p\).
  \item Assume \(\alpha_{1}\) (hence any of the \(\alpha_{s}\)) is invertible
    over \(R\), and \(R\) is \(p\)-adically complete. Then
    \(\alpha_{s+1}(\alpha_{s}^{\sigma})^{-1}\equiv\alpha_{s}(\alpha_{s-1}^{\sigma})^{-1}\mod
    p^{s}\).
  \end{enumerate}
\end{proposition}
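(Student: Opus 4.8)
The plan is to work throughout with the integer coefficients $c_{m} := [t^{m}] f^{p-1}$ (writing $[t^{m}]$ for the operation of extracting the coefficient of $t^{m}$), so that $(\alpha_{s})_{u,v} = [t^{p^{s}v-u}] f^{p^{s}-1}$ and $(\alpha_{1}^{\sigma^{k}})_{a,b} = \sigma^{k}(c_{pb-a})$, and to exploit the factorization $p^{s}-1=\sum_{k=0}^{s-1}(p-1)p^{k}$. For part~(1), I would first reduce modulo $p$ using the Frobenius lift: since $\sigma(r)\equiv r^{p}\pmod p$, any Laurent polynomial $h$ satisfies $h^{p}\equiv h^{\sigma}(t^{p})\pmod p$, where $h^{\sigma}$ applies $\sigma$ to the coefficients of $h$. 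Applied to $h=f^{p-1}$ this gives, modulo $p$,
\[
f^{p^{s}-1}=\prod_{k=0}^{s-1}\bigl(f^{p-1}\bigr)^{p^{k}}\equiv\prod_{k=0}^{s-1}\bigl(f^{p-1}\bigr)^{\sigma^{k}}(t^{p^{k}}).
\]
Extracting the coefficient of $t^{p^{s}v-u}$ turns the right-hand side into a sum, over tuples $(w_{0},\dots,w_{s-1})$ with $\sum_{k}p^{k}w_{k}=p^{s}v-u$, of $\prod_{k}\sigma^{k}(c_{w_{k}})$. The substitution $w_{k}=p\,u_{k+1}-u_{k}$ (with $u_{0}=u$, $u_{s}=v$) is a bijection onto integer chains $u_{0},\dots,u_{s}\in\mathbb{Z}^{d}$: the constraint $\sum_{k}p^{k}w_{k}=p^{s}v-u$ telescopes to $p^{s}u_{s}-u_{0}$, and conversely the $u_{k}\in\mathbb{Z}^{d}$ are uniquely determined. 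Under it $\prod_{k}\sigma^{k}(c_{w_{k}})$ becomes $\prod_{k}(\alpha_{1}^{\sigma^{k}})_{u_{k},u_{k+1}}$, the generic term of the matrix product $\alpha_{1}\alpha_{1}^{\sigma}\cdots\alpha_{1}^{\sigma^{s-1}}$ summed over all $u_{1},\dots,u_{s-1}$.

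The crux of part~(1) is then to match the two index sets: the matrix product sums only over chains with every $u_{k}\in\Delta^{\circ}\cap\mathbb{Z}^{d}$, whereas the coefficient extraction sums over all integer chains. This gap is closed by a convexity observation: if $u_{k}\in\Delta^{\circ}$, $u_{k+1}\in\mathbb{Z}^{d}$ and $c_{w_{k}}\neq0$, then $u_{k+1}\in\Delta^{\circ}$. Indeed $c_{w_{k}}\neq0$ forces $w_{k}=p\,u_{k+1}-u_{k}$ into the Newton polytope $(p-1)\Delta$ of $f^{p-1}$, so $u_{k+1}=\tfrac1p u_{k}+\tfrac{p-1}{p}\cdot\tfrac{w_{k}}{p-1}$ is a convex combination, with positive weight on the interior point $u_{k}$, of $u_{k}\in\Delta^{\circ}$ and $\tfrac{w_{k}}{p-1}\in\Delta$, hence interior. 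Starting from $u_{0}=u\in\Delta^{\circ}$ and iterating, every nonvanishing term of the coefficient extraction automatically has all $u_{k}\in\Delta^{\circ}$, while terms with some $u_{k}\notin\Delta^{\circ}$ contribute $0$. The two sums therefore agree term by term, proving~(1).

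For part~(2), the statement is a Dwork-type congruence and is the genuinely harder part. Using the $p$-adic completeness of $R$ and the invertibility of $\alpha_{1}$ (hence of every $\alpha_{s}$, by~(1)), I would first rewrite the desired relation $\alpha_{s+1}(\alpha_{s}^{\sigma})^{-1}\equiv\alpha_{s}(\alpha_{s-1}^{\sigma})^{-1}\pmod{p^{s}}$ in the equivalent form
\[
\alpha_{s+1}\equiv\alpha_{s}\,(\alpha_{s-1}^{-1}\alpha_{s})^{\sigma}\pmod{p^{s}},
\]
obtained by right-multiplying through by the integral unit matrix $\alpha_{s}^{\sigma}$ and using that $\sigma$ commutes with matrix inversion. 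I would then prove this congruence by induction on $s$, starting from the exact factorization $f^{p^{s+1}-1}=(f^{p^{s}-1})^{p}\,f^{p-1}$ together with the refined freshman's dream $(f^{p^{s}-1})^{p}\equiv(f^{p^{s}-1})^{\sigma}(t^{p})\pmod p$, feeding the inductive hypothesis through the resulting convolution so that the $p$-adic precision increases by one at each level.

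The main obstacle is precisely this bookkeeping: one must control the $p$-adic valuation of the error terms produced by the non-multiplicativity of $h\mapsto h^{p}$ as they pass through the convolution, and check that the interiority mechanism of part~(1) keeps all intermediate exponents in $\Delta^{\circ}$, so that the relevant $\alpha$-matrices rather than spurious boundary contributions appear. This valuation estimate is the technical heart of the Dwork congruences, and is where essentially all of the work in part~(2) lies; part~(1) enters only as the mod-$p$ input and as the source of the interiority constraint.
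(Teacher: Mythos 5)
The paper does not actually prove this proposition: it is quoted from Vlasenko with attribution, so there is no in-paper argument to compare against. Judged on its own terms, your part~(1) is a complete and correct proof. The factorization $p^{s}-1=\sum_{k}(p-1)p^{k}$, the congruence $h^{p}\equiv h^{\sigma}(t^{p})\bmod p$, the telescoping substitution $w_{k}=pu_{k+1}-u_{k}$ (whose integrality, which you assert, does follow: $u_{j}=(u+\sum_{k<j}p^{k}w_{k})/p^{j}=v-p^{-j}\sum_{k\ge j}p^{k}w_{k}\in\mathbb{Z}^{d}$), and the convexity argument showing that nonvanishing terms force every $u_{k}$ into $\Delta^{\circ}$ together give exactly the identification with the matrix product $\alpha_{1}\alpha_{1}^{\sigma}\cdots\alpha_{1}^{\sigma^{s-1}}$ modulo $p$. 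This is essentially Vlasenko's own argument for~(1).

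Part~(2), however, is not proved. You correctly reformulate the congruence as $\alpha_{s+1}\equiv\alpha_{s}(\alpha_{s-1}^{-1}\alpha_{s})^{\sigma}\bmod p^{s}$ and propose an induction via $f^{p^{s+1}-1}=(f^{p^{s}-1})^{p}f^{p-1}$ and the freshman's dream, but then you explicitly defer ``the valuation estimate'' that you yourself identify as ``the technical heart.'' That estimate is the entire content of the statement: the freshman's dream only gives information modulo $p$, and there is no visible mechanism in your sketch by which the precision improves to $p^{s}$ at level $s$; the naive convolution of a mod-$p^{s-1}$ inductive hypothesis with a mod-$p$ congruence does not yield a mod-$p^{s}$ conclusion. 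Vlasenko's actual proof requires a genuinely finer analysis (a Cartier-operator/Frobenius-splitting decomposition of the coefficient extraction). It is worth noting that the present paper later obtains~(2), at least over $W(k)$, by an entirely different route: Lemma~\ref{lemma:two-basic-sets-of-curves} compares the two basic sets of $p$-typical curves $\{\gamma_{v}\}$ and $\{\gamma_{v}^{\ast}\}$ on $\Phi_{f}$ and yields $\alpha_{s}\equiv b_{s}\bmod p^{s}$ with $b_{s}=b_{1}b_{1}^{\sigma}\cdots b_{1}^{\sigma^{s-1}}$ exactly multiplicative, from which $\alpha_{s+1}(\alpha_{s}^{\sigma})^{-1}\equiv b_{1}\bmod p^{s}$ and hence the stated congruence follow immediately (see the proof of Lemma~\ref{lemma:absolute-conjecture-proof}). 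Either that formal-group argument or Vlasenko's combinatorial one must be supplied; as it stands your part~(2) is a plan, not a proof.
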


In the situation of Proposition~\ref{proposition:vlasenko-congruences}(2), the
\(p\)-adic limit
\begin{equation}
  \label{eq:frobenius-as-limit}
  \alpha=\lim_{s\to\infty}\alpha_{s+1}(\alpha_s^{\sigma})^{-1}
\end{equation}
exists. Vlasenko conjectured in [loc.~cit.] that \(\alpha\) is a matrix of some
Frobenius operation on some \(F\)-crystal. Under a very mild assumption on the
coefficients of \(f\),
Beukers--Vlasenko~\cite[Remark~2.5]{beukers-vlasenko:dwork-crystal-1} identifies
\(\alpha\) with a matrix of the Frobenius operation of their
``Dwork crystal''. Assuming the hypersurface \(X\) we mentioned in
Section~\ref{sec:recover-frobenius} is smooth over the base, the ambient
toric variety is smooth, and a technical condition on \(\Delta\),
the paper of Huang--Lian--Yau--Yu~\cite{hlyy-hasse} identifies \(\alpha\) with the Frobenius of the
unit root part of the relative crystalline cohomology of the family.

The purpose of this section is to prove that, under the
assumption of Proposition~\ref{proposition:vlasenko-congruences}(2), \(\alpha\)
is a matrix of the Frobenius operation of the (covariant) Dieudonné crystal of
the formal group \(\Gamma_{f}\), the reduction of \(\Phi_{f}\) modulo \(p\).
At the end of this section (Remark~\ref{remark:relation-with-rigid-cohomology})
we explain how to relate the Dieudonné crystal with
the geometric isocrystal associated with the hypersurfaces in the toric variety
defined by \(\Delta\).


\begin{situation}[An overview of Cartier's theory of curves]%
  \label{situation:cartier-theory-notation}
  We need the some basic facts in Cartier's curve theory. A thorough reference
  is Lazard's book~\cite{lazard:commutative-formal-groups}. We shall follow
  Lazard's conventions and notation, and explain some part of the theory we find
  necessary to understand the rest of the note.
  \begin{enumerate}
  \item A \emph{curve} on a formal Lie group \(G\) is simply a morphism (of
    set-valued functors) \(\gamma\) from \(\widehat{\mathbb{G}}_{\mathrm{a}}\)
    into \(G\).
    If \(R\) is a ring of characteristic \(0\), so that \(R\) embeds in to
    \(R \otimes \mathbb{Q}\), then we can represent a curve using the formal
    logarithm
    \(\log_{G}:G_{R\otimes\mathbb{Q}}\to{\mathbb{G}}_{\mathrm{a}}^{r}\), thus
    identify a curve with a power series
    \(\log_{G}\gamma(t)=\sum_{i=1}^{\infty}a_{i}t^{i}\) where
    \(a_i\in R^r\otimes\mathbb{Q}\).
    \label{item:curve}
  \item The tangent space of \(G\) could be regarded as a free \(R\)-module.
    A \emph{basic set of curves} of \(G\) is a collection of curves
    \(\gamma_1,\ldots,\gamma_{r}\) on \(G\) such that their tangent vectors
    form a basis of the tangent space of \(G\).
    If \(\star\) is the group operation of \(G\), the morphism (of set
    valued functors)
    \begin{equation*}
      \widehat{\mathbb{G}}_{\mathrm{a}}^{r} \xrightarrow{\sigma} G,
      \quad (t_1,\ldots,t_r) \mapsto \gamma_1(t_1) \star \cdots \star \gamma_{r}(t_r)
    \end{equation*}
    is an isomorphism, and it provides a coordinate system of \(G\). The group
    law under this coordinate system is called a
    \emph{curvilinear formal group law}.
    \label{item:basic-set-curve-curvilinear}

    Suppose that \(R\) is a \(\mathbb{Q}\)-algebra. Then in terms of the
    notation above, we have
    \begin{equation*}
      (\log_G \circ \sigma)(t_1,\ldots,t_r) = \sum_{i=1}^{r} \log_{G}\gamma_{i}(t_i)
    \end{equation*}
    is literally the sum of formal power series.
  \item Vlasenko's formal group law \(F_{f}\) for \(\Phi_{f}\) is a example of a
    curvilinear formal group law, defined by the curves \(\ell_{w}\),
    \(w \in \Delta^{\circ}\cap \mathbb{Z}^{d}\), where
    \begin{equation*}
      (\log_{\Phi_f} \circ \ell_{w})(\tau) = \sum_{\nu=1}^{\infty} \beta_{w,\nu} \frac{\tau^{\nu}}{\nu}
    \end{equation*}
    in which \(\beta_{w,\nu}= (\beta_{v,w,\nu})\),
    see~\ref{situation:setting-formal-group}.
    \label{item:vlasenko-curve}
  \item Among all curves on a formal Lie group \(G\) on a
    \(\mathbb{Z}_{(p)}\)-algebra there is a special class that is most relevant
    to our discussion. These are the \emph{\(p\)-typical curves} on \(G\). We
    shall not give the precise definition of \(p\)-typical curves. For our
    discussion, it is useful to know that if \(R\) has characteristic \(0\),
    then a curve \(\gamma\) on \(G\) is \(p\)-typical if and only if the power
    series \(\log_{G}\circ \gamma\) is of the form \(\sum_{s} a_{s}t^{p^s}\),
    i.e., in the power series expansion, only \(t\) to some \(p\)-power has
    possibly nonzero coefficients. The abelian group of all \(p\)-typical curves on
    \(G\) is denoted by \(\mathcal{C}(G)\).
    \label{item:p-typical-curve}
  \item For each ring \(R\), Cartier defined a noncommutative ring \(E(R)\). The
    ring \(E(R)\)
    consists of ``operators'' on the \(p\)-typical curves. Therefore, for each
    formal Lie group \(G\) over \(R\), \(\mathcal{C}(G)\) is a left
    \(E(R)\)-module. (see~\cite[IV~\S2]{lazard:commutative-formal-groups}).
    When \(R\) is a
    perfect field of characteristic \(p>0\),
    the \(E(R)\)-module \(\mathcal{C}(G)\) consisting of \(p\)-typical curves on
    \(G\) is
    also called the (covariant) \emph{Cartier--Dieudonné module} (or simply the
    Dieudonné module) of \(G\).
    \label{item:cartier-ring}
  \item The curves \(\ell_v\) described in Item~\eqref{item:vlasenko-curve} are
    not \(p\)-typical. The \(p\)-typical component \(\gamma_v\) of \(\ell_v\) is
    determined by
    \begin{equation*}
      (\log_{\Phi_f}\circ \gamma_v)(\tau) = \sum_{s=0}^{\infty} \alpha_{v,s} \frac{\tau^{p^s}}{p^s}
    \end{equation*}
    (see~\eqref{eq:vlasenko-matrix} and Item~\eqref{item:p-typical-curve} above). Clearly they form a basic set of
    \(p\)-typical curves on \(\Phi_{f}\). For formal Lie groups over a
    \(\mathbb{Z}_{(p)}\)-algebra, using \(p\)-typical curves is sufficient to
    determine the formal group
    (see~\cite[Chapter~IV]{lazard:commutative-formal-groups}).
    \label{item:p-typical-curve-vlasenko}
  \end{enumerate}
\end{situation}

\begin{example}(Homothety, Verschebung, and Frobenius)%
  \label{example:additive-operators}
  There are three operators in \(E(R)\).
  For each \(a \in R\), we can define an element \([a] \in E(R)\):
  \(([a]\gamma)(t)=\gamma(at)\), called the homothety operator of \(a\).  There
  is also the \(p\)-typical Verschebung operator \(V\) and the \(p\)-typical
  Frobenius operator \(F\). For our purposes, we only need to know the effect of
  the operators \(V\) and \([a]\) on additive curves, summarized below.

  When \(G\) is the additive group
  \(\widehat{\mathbb{G}}_{\mathrm{a}}^{r}\), a \(p\)-typical curve on \(G\) is
  given by a power series
  \begin{equation*}
    \gamma(t) = \sum_{i=0}^{\infty} a_{i}t^{p^i} \quad (a_i \in R^r),
  \end{equation*}
  and the abelian group structure of
  \(\mathcal{C}(\widehat{\mathbb{G}}_{\mathrm{a}}^r)\) is simply the addition of
  power series. We have
  \begin{align*}
    (F\gamma)(t) &= \sum_{i=0}^{\infty} pa_{i+1} t^{p^i} \\
    (V\gamma)(t) &= \sum_{i=1}^{\infty} a_{i-1} t^{p^i}, \\
    ([c]\gamma)(t) &= \sum_{i=0}^{\infty} c^{p^i} a_{i} t^{p^i}.
  \end{align*}
\end{example}

\begin{lemma}%
  \label{lemma:basics-on-curves}
  Let \(G\) be an \(r\)-dimensional formal Lie group over a
  \(\mathbb{Z}_{(p)}\)-algebra \(R\).
  \begin{enumerate}
  \item Let \(\gamma_1, \ldots,\gamma_r\) be a basic set of \(p\)-typical curves
    on \(G\). Then every \(p\)-typical curve \(\gamma\) can be
    uniquely written as
    \begin{equation*}
      \gamma = \sum_{n=0}^{\infty} V^{n} [x_{n,i}] \gamma_i.
    \end{equation*}
  \item Let \(\varphi:R \to R'\) be a ring homomorphism. Let \(\varphi_{\ast}:E(R) \to E(R')\)
    be the base-change homomorphism of the Catier rings. Let
    \(x=\sum_{i,j}V^{i}[x_{ij}]F^{j}\) be an element in \(E(R)\). Then
    \(\varphi_{\ast}(x)=\sum_{i,j}V^{i}[\varphi(x_{ij})]F^{j}\).
  \end{enumerate}
\end{lemma}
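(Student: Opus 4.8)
The plan is to derive both parts from the structure theory of reduced Cartier modules developed in Lazard~\cite{lazard:commutative-formal-groups}; the genuinely new content is only bookkeeping. For part (1), the essential input is that for a formal Lie group \(G\) of dimension \(r\) over a \(\mathbb{Z}_{(p)}\)-algebra, the \(E(R)\)-module \(\mathcal{C}(G)\) of \(p\)-typical curves is \emph{reduced}: the operator \(V\) is injective, \(\mathcal{C}(G)\) is separated and complete for the \(V\)-adic filtration \(\{V^{n}\mathcal{C}(G)\}_{n\geq 0}\), and the quotient \(\mathcal{C}(G)/V\mathcal{C}(G)\) is canonically the tangent space of \(G\), a free \(R\)-module of rank \(r\). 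By the definition of a basic set of \(p\)-typical curves (see~\ref{situation:cartier-theory-notation}), the classes \(\bar{\gamma}_1,\dots,\bar{\gamma}_r\) form an \(R\)-basis of this tangent space, and, as recorded in Example~\ref{example:additive-operators} (and valid on the tangent quotient of any \(G\)), the homothety \([a]\) acts on \(\mathcal{C}(G)/V\mathcal{C}(G)\) by multiplication by \(a\).

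With these facts in hand, part (1) follows by a standard successive-approximation argument. Given a \(p\)-typical curve \(\gamma\), its image in \(\mathcal{C}(G)/V\mathcal{C}(G)\) is written uniquely as \(\sum_{i=1}^{r}x_{0,i}\,\bar{\gamma}_i\); since \([x_{0,i}]\) acts as \(x_{0,i}\) on the quotient, the difference \(\gamma-\sum_{i=1}^{r}[x_{0,i}]\gamma_i\) lies in \(V\mathcal{C}(G)\). By injectivity of \(V\) there is a unique \(\gamma^{(1)}\) with \(\gamma-\sum_{i}[x_{0,i}]\gamma_i=V\gamma^{(1)}\); applying the same step to \(\gamma^{(1)}\) produces the coefficients \(x_{1,i}\), and iterating yields all \(x_{n,i}\). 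The partial sums \(\sum_{n=0}^{m}\sum_{i=1}^{r}V^{n}[x_{n,i}]\gamma_i\) then converge \(V\)-adically to \(\gamma\) by completeness, giving the asserted expansion. Uniqueness follows by the same induction: reducing any such relation modulo \(V\) and using freeness of the tangent space forces the \(x_{0,i}\) to agree, after which injectivity of \(V\) lets us cancel and descend to the next layer.

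For part (2), recall that \(R\mapsto E(R)\) is a functor: a ring homomorphism \(\varphi\colon R\to R'\) induces a continuous ring homomorphism \(\varphi_{\ast}\colon E(R)\to E(R')\) determined on topological generators by \(\varphi_{\ast}(V)=V\), \(\varphi_{\ast}(F)=F\), and \(\varphi_{\ast}([a])=[\varphi(a)]\). Well-definedness is immediate because \(\varphi\) carries each defining relation to the corresponding relation over \(R'\); for instance \(F[a]=[a^{p}]F\) maps to \(F[\varphi(a)]=[\varphi(a)^{p}]F=[\varphi(a^{p})]F\). Every element of \(E(R)\) has a unique canonical expansion \(\sum_{i,j}V^{i}[x_{ij}]F^{j}\)~\cite{lazard:commutative-formal-groups}, and since \(\varphi_{\ast}\) sends each term \(V^{i}[x_{ij}]F^{j}\) to \(V^{i}[\varphi(x_{ij})]F^{j}\) and is continuous for the filtration controlling this expansion, it acts coefficientwise, which is exactly the claimed formula.

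I expect no serious obstacle, since the heavy lifting is foundational and already carried out in~\cite{lazard:commutative-formal-groups}. The one point that demands care throughout is that the expansions in both parts are honest infinite sums: I must genuinely invoke \(V\)-adic completeness of \(\mathcal{C}(G)\) in part (1) and the continuity of \(\varphi_{\ast}\) in part (2), rather than treating either manipulation as a finite one.
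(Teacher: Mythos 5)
Your argument is correct: the paper disposes of both items purely by citation to Lazard (IV~5.15, IV~5.17 for item (1) and IV~2.5 for item (2)), and what you have written is precisely the standard proof underlying those citations --- the successive-approximation expansion in a reduced \(V\)-divided module (\(V\) injective, \(\mathcal{C}(G)\) \(V\)-adically separated and complete, \(\mathcal{C}(G)/V\mathcal{C}(G)\) the free tangent module on which \([a]\) acts by multiplication by \(a\)) for (1), and functoriality of \(E(R)\) on generators together with the canonical \(\sum V^{i}[x_{ij}]F^{j}\) expansion for (2). No gap; you have simply supplied the details the paper delegates to the reference.
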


\begin{proof}
Item (1) is~\cite[IV~5.15, IV~5.17]{lazard:commutative-formal-groups}. Item (2)
is~\cite[IV~2.5]{lazard:commutative-formal-groups}.
\end{proof}

\begin{situation}%
  \label{situation:congruence-curve-coefficients}
  Lemma~\ref{lemma:two-basic-sets-of-curves} below will give a tool to produce congruence relations from
  the theory of curves. To state it, we make some hypotheses and set up
  some notation.
  \begin{enumerate}
  \item Let \(R\) be a flat \(\mathbb{Z}\)-algebra. Let \(\varphi: R \to R/p\)
    be the reduction mod \(p\) map. Let \(G\) be an \(r\)-dimensional formal Lie
    group over \(R\).
  \item Let \(\gamma_1,\ldots,\gamma_r\) and
    \(\gamma_1^{\ast},\ldots,\gamma_r^{\ast}\) be two basic sets of
    \(p\)-typical curves on \(G\) such that
    the tangent vectors satisfy \(\dot{\gamma}_i=\dot{\gamma}_{i}^{\ast}\).
    Assume further that
    \(\varphi_{\ast}\gamma_{i}=\varphi_{\ast}\gamma_{j}^{\ast}\).
  \item Let \(\log_{G}\) be the formal logarithm of \(G\) (after changing the
    base ring to \(R\otimes \mathbb{Q}\)). Then we can write
    \begin{equation*}
      (\log_G\circ \gamma_{j})(t) = \sum_{s=0}^{\infty} a_{j,s} \frac{t^{p^s}}{p^{s}}, a_{j} \in R^{r},
    \end{equation*}
    and similarly
    \begin{equation*}
      (\log_G\circ \gamma^{\ast}_{j})(t) = \sum_{s=0}^{\infty} a^{\ast}_{j,s} \frac{t^{p^s}}{p^{s}}, a^{\ast}_{j} \in R^{r},
    \end{equation*}
    see~\cite[V~8.19]{lazard:commutative-formal-groups}.
  \item  Finally let \(a_{s}\) to be
    the matrix \((a_{1,s},\ldots,a_{r,s})\) (similarly define \(a_{s}^{\ast}\)).

  \end{enumerate}
\end{situation}

\begin{lemma}%
  \label{lemma:two-basic-sets-of-curves}
  Let notation and conventions be as
  in~\textup{\ref{situation:congruence-curve-coefficients}}. We have
  \begin{equation*}
    a_{s}  \equiv a_{s}^{\ast} \mod p^{s}.
  \end{equation*}
\end{lemma}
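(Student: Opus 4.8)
The plan is to compare the two basic sets inside the Cartier module \(\mathcal{C}(G)\), extract the constraints imposed by the two hypotheses in \textup{\ref{situation:congruence-curve-coefficients}} on the transition coefficients, and then read off the \(p\)-adic congruences from the explicit action of \(V\) and \([c]\) on the logarithm.

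First I would use Lemma~\ref{lemma:basics-on-curves}(1): since \(\gamma_1,\dots,\gamma_r\) is a basic set of \(p\)-typical curves on \(G\), each \(\gamma_j^\ast\) has a unique expansion
\[
  \gamma_j^\ast = \sum_{i=1}^r\Big(\sum_{n\ge 0} V^n[x_{n,i}^{(j)}]\Big)\gamma_i, \qquad x_{n,i}^{(j)}\in R.
\]
The equality of tangent vectors \(\dot\gamma_j=\dot\gamma_j^\ast\) kills every \(V^n\)-term with \(n\ge 1\) on the tangent space and leaves \(\sum_i x_{0,i}^{(j)}\dot\gamma_i=\dot\gamma_j\); as the \(\dot\gamma_i\) form an \(R\)-basis this forces \(x_{0,i}^{(j)}=\delta_{ij}\), so the \(V^0\)-part of the transition ``matrix'' is the identity. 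For the higher terms I would reduce modulo \(p\): the curves \(\varphi_\ast\gamma_i\) are again a basic set on \(G_{R/p}\) (their tangents are the reductions of a basis, hence a basis over \(R/p\)), and applying \(\varphi_\ast\) to the displayed identity via Lemma~\ref{lemma:basics-on-curves}(2) gives \(\varphi_\ast\gamma_j^\ast=\sum_i(\sum_n V^n[\varphi(x_{n,i}^{(j)})])\varphi_\ast\gamma_i\). Since \(\varphi_\ast\gamma_j^\ast=\varphi_\ast\gamma_j=\sum_i[\delta_{ij}]\varphi_\ast\gamma_i\), the uniqueness part of Lemma~\ref{lemma:basics-on-curves}(1) over \(R/p\) forces \(\varphi(x_{n,i}^{(j)})=0\) for all \(n\ge 1\), that is \(x_{n,i}^{(j)}\in pR\) whenever \(n\ge 1\).

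Next I would transport everything to the formal logarithm. Over \(R\otimes\mathbb{Q}\) the map \(\log_G\) is an isomorphism of formal groups, hence induces an isomorphism of \(E(R\otimes\mathbb{Q})\)-modules, so the identity above becomes \(\log_G\gamma_j^\ast=\sum_i(\sum_n V^n[x_{n,i}^{(j)}])\log_G\gamma_i\), where now \(V\) and \([c]\) act by the additive-group formulas of Example~\ref{example:additive-operators}. Writing \(\log_G\gamma_i=\sum_s a_{i,s}t^{p^s}/p^s\) and reading off the coefficient of \(t^{p^s}\) — noting that \(V^n[x]\) carries the coefficient \(c\) of \(t^{p^{s-n}}\) to \(x^{p^{s-n}}c\) in the coefficient of \(t^{p^s}\) — I expect to obtain, after clearing the denominators,
\[
  a_{j,s}^\ast=\sum_{i}\sum_{n=0}^{s} p^{\,n}\,(x_{n,i}^{(j)})^{p^{s-n}}\,a_{i,s-n}.
\]
The \(n=0\) summand is exactly \(a_{j,s}\) (using \(x_{0,i}^{(j)}=\delta_{ij}\)), so the whole content is that the remaining tail vanishes modulo \(p^s\).

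Finally I would count \(p\)-adic valuations in the tail. For \(n\ge 1\) we have \(x_{n,i}^{(j)}\in pR\), hence \((x_{n,i}^{(j)})^{p^{s-n}}\in p^{p^{s-n}}R\), and together with the explicit factor \(p^n\) each tail term lies in \(p^{\,n+p^{s-n}}R\). Setting \(k=s-n\), the exponent is \(s+(p^{k}-k)\ge s\) because \(p^{k}\ge k\) for every \(k\ge 0\). Therefore \(a_{j,s}^\ast\equiv a_{j,s}\pmod{p^s}\) for all \(j\), which is the asserted \(a_s\equiv a_s^\ast\pmod{p^s}\). The main obstacle is precisely this valuation bookkeeping through the normalization \(t^{p^s}/p^s\): it is the interaction of the factor \(p^n\) produced by \(V^n\) acting on \(\log\) with the Frobenius-twisted homothety \(x\mapsto x^{p^{s-n}}\) that yields the exponent \(n+p^{s-n}\), and the conclusion rests on the elementary inequality \(p^{k}\ge k\). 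Everything else is the formal transport of the curve identity through \(\log_G\).
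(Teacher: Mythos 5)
Your proposal is correct and follows essentially the same route as the paper: expand one basic set in terms of the other via Lemma~\ref{lemma:basics-on-curves}(1), use the tangent condition and the mod-\(p\) agreement together with the uniqueness of the expansion over \(R/p\) to pin down the transition coefficients (\(\delta_{ij}\) in degree \(V^0\), divisible by \(p\) for \(n\ge 1\)), push through \(\log_G\) using the \(V\) and \([c]\) rules of Example~\ref{example:additive-operators}, and conclude by the valuation estimate \(n+p^{s-n}\ge s\). The only cosmetic differences are that you expand \(\gamma_j^\ast\) in the \(\gamma_i\) rather than the reverse, and you extract \(x_{0,i}^{(j)}=\delta_{ij}\) entirely from the tangent hypothesis instead of getting the off-diagonal \(n=0\) divisibility from the mod-\(p\) reduction; both variants are equally valid.
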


\begin{proof}
  By Lemma~\ref{lemma:basics-on-curves}(1), we can write
  \begin{equation}
    \label{eq:curve-relation}
    \gamma_{j} = \sum V^{n}[x_{n,i}^{(j)}] \gamma_i^{\ast}.
  \end{equation}
  Since \(\varphi_{\ast}\gamma_j = \varphi_{\ast}\gamma_j^{\ast}\), we have by
  Lemma~\ref{lemma:basics-on-curves}(2) and Hypothesis
  \ref{situation:congruence-curve-coefficients}(2) that
  \begin{equation*}
    \varphi_{\ast}\gamma_{j}^{\ast} =
    \varphi_{\ast}\gamma_{j} = \sum V^{n}[\varphi(x_{n,i}^{(j)})] \varphi_{\ast}\gamma_i^{\ast}.
  \end{equation*}
  Applying the uniqueness part of Lemma~\ref{lemma:basics-on-curves}(1) (for the
  ring \(E(R/p)\)), we conclude that \(\varphi(x_{n,i}^{(j)}) = 0\) in \(R/p\)
  unless \(i=j\) and \(n=0\). In other words, \(p \mid x_{n,i}^{(j)}\) for all
  \((n,i,j)\neq(0,i,i)\). In view of
  Hypothesis~\ref{situation:congruence-curve-coefficients}(2),
  \(x_{0,i}^{(i)}=1\).

  Thanks to Lemma~\ref{lemma:two-basic-sets-of-curves}(2), if
  \(\psi : R \hookrightarrow R\otimes \mathbb{Q}\) is the inclusion of \(R\) in
  \(R\otimes \mathbb{Q}\), then~\eqref{eq:curve-relation} remains valid (by abuse
  of notation, we identify \(\gamma_{j}\) with \(\psi_{\ast}\gamma_j\)).
  Now we apply \(\log_{G}\) to the equality~\eqref{eq:curve-relation}. By
  the basic rules of \(V\) and \([a]\) described
  in Example~\ref{example:additive-operators}, we get
  \begin{align*}
    \sum_{s=0}^{\infty} a_{j,s} \frac{t^{p^s}}{p^s}
    &= \sum_{n=0}^{\infty} V^{n}[x_{n,i}^{(j)}] \left\{\sum_{s=0}^{\infty} a_{i,s}^{\ast} \frac{t^{p^s}}{p^s}\right\} \\
    &= \sum_{n=0}^{\infty} V^{n} \left\{ \sum_{s=0}^{\infty} (x_{n,i}^{(j)})^{p^s} a_{i,s}^{\ast} \frac{t^{p^s}}{p^s} \right\} \\
    &= \sum_{n=0}^{\infty} \sum_{s=n}^{\infty} (x_{n,i}^{(j)})^{p^{s-n}} a_{i,s-n}^{\ast} \frac{t^{p^s}}{p^{s-n}} \\
    &= \sum_{s=0}^{\infty} \sum_{n=0}^{s} (x_{n,i}^{(j)})^{p^{ s-n }} a_{i,s-n}^{\ast} \frac{t^{p^s}}{p^{s-n}}.
  \end{align*}
  It follows that in the ring \(R\otimes \mathbb{Q}\), we have
  \begin{equation*}
    a_{j,s} = \sum_{n=0}^{s} p^{n} (x_{n,i}^{(j)})^{p^{s-n}} a_{i,s-n}^{\ast}
  \end{equation*}
  As both sides fall in \(R\), the displayed equality is valid in \(R\) as well.
  Since \(p \mid x_{n,i}^{(j)}\) unless \(n=0\), \(i=j\), we see
  (remember that \(p^{s-n} > s-n\))
  \(p^{s} \mid p^{n}(x_{n,i}^{(j)})^{p^{s-n}} a_{i,s-n}^{\ast}\). This implies that
  \(a_{j,s} \equiv a_{j,s}^{\ast} \mod p^{s}\).
\end{proof}

We will also need the following simple fact.

\begin{lemma}%
  \label{lemma:nak}
  Let \(R\) be a ring. Let \(a\) be an element in \(R\).
  Assume that \(R\) is \(a\)-torsion free and \(a\)-adically complete.
  Let \(\varphi:R^{n}\to R^{n}\) be a morphism of free \(R\)-modules.
  Then \(\varphi\) is invertible if and only if the reduction of \(\varphi\)
  modulo \(a\) is invertible.
\end{lemma}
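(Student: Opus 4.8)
The plan is to reduce to the scalar ($n=1$) case and then feed it into the determinant criterion for invertibility of matrices over a commutative ring. The forward implication requires nothing: if $\psi$ is a two-sided inverse of $\varphi$ over $R$, then reducing the identities $\varphi\psi=\psi\varphi=\mathrm{id}$ modulo $a$ exhibits the reduction $\bar\psi$ as an inverse of $\bar\varphi$. So the whole content lies in the converse, and there the only mechanism available is $a$-adic completeness.

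I would first isolate the scalar statement: an element $r\in R$ is a unit as soon as its image $\bar r\in R/a$ is a unit. Indeed, choose $s\in R$ with $rs\equiv 1\pmod a$, so that $rs=1-ab$ for some $b\in R$. Because $R$ is $a$-adically complete, the partial sums of the geometric series $\sum_{k\ge 0}(ab)^{k}$ form a Cauchy sequence in the $a$-adic topology and therefore converge to some $c\in R$ with $(1-ab)c=1$. Hence $rs=1-ab$ is a unit, so $r$ admits a right inverse $sc$; in the commutative ring $R$ a right inverse is automatically a two-sided inverse, so $r\in R^{\times}$.

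With the scalar case in hand, the matrix assertion follows from the classical fact that a square matrix over a commutative ring is invertible precisely when its determinant is a unit. Since the determinant is a polynomial in the entries, it commutes with reduction, $\det(\bar\varphi)=\overline{\det(\varphi)}$; the hypothesis that $\bar\varphi$ is invertible says $\overline{\det(\varphi)}\in(R/a)^{\times}$, and the scalar case upgrades this to $\det(\varphi)\in R^{\times}$, whence $\varphi$ is invertible. Alternatively, and avoiding determinants, one may lift an inverse of $\bar\varphi$ to a matrix $\psi_{0}$ over $R$, observe that $\varphi\psi_{0}=\mathrm{id}+aM$ for some matrix $M$, and invert $\mathrm{id}+aM$ by the same Neumann series $\sum_{k\ge 0}(-aM)^{k}$, now convergent entrywise by completeness; this produces a right inverse of $\varphi$, which again suffices.

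The upshot is that this is a soft lemma with no real obstacle: the single point that makes it run is the convergence of a geometric series, guaranteed by $a$-adic completeness, which forces every element of $1+aR$ to be a unit. I note that the $a$-torsion-freeness hypothesis is not actually used in either argument above; it is harmless here and is in any case satisfied in the intended application, where $R$ is flat over $\mathbb{Z}$ and $a=p$.
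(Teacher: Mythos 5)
Your proof is correct, but it takes a genuinely different route from the paper's. The paper argues directly at the level of the map \(\varphi\): surjectivity is proved by a successive-approximation scheme (lift a preimage mod \(a\), correct the error, and sum the resulting series \(y=\sum a^{i}y_{i}\), which converges by completeness), and injectivity is proved separately by showing that any \(x\in\ker\varphi\) is divisible by arbitrarily high powers of \(a\) and hence vanishes by separatedness --- and it is precisely in this second step that the \(a\)-torsion-freeness hypothesis is invoked, to pass from \(\varphi(ax_{1})=0\) to \(\varphi(x_{1})=0\). You instead reduce to the scalar statement that \(1+aR\subset R^{\times}\) (the geometric series, again convergent by completeness) and then either apply the determinant criterion or run a Neumann series on \(\mathrm{id}+aM\); both variants are fine, and the passage from a one-sided to a two-sided inverse for a square matrix over a commutative ring is standard. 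What your argument buys is that the \(a\)-torsion-freeness hypothesis is genuinely superfluous, as you observe: invertibility mod \(a\) plus completeness already forces invertibility, with no flatness input. What the paper's argument buys is a self-contained, coordinate-free verification that never mentions determinants and exhibits the inverse by explicit approximation; the cost is the (unnecessary) extra hypothesis, which is harmless in the intended application where \(R\) is flat over \(\mathbb{Z}\) and \(a=p\). One small point to keep in mind in your scalar step: concluding \((1-ab)c=1\) from the convergence of the partial sums uses \(a\)-adic separatedness, which is part of what "complete" means here (the paper makes the same convention explicit in its injectivity step), so there is no gap.
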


\begin{proof}
  Since \(\varphi\) is given by a linear map, it is continuous.
  The ``only if'' part is obvious. Let us prove the ``if'' part.
  Let \(x \in R^{n}\) be an element. Write \(x = x_0\).  Inductively
  define \(x_i\) and \(y_i\) so that \(x_{i}-\varphi(y_{i})\) is zero modulo
  \(a\), write \(x_i-\varphi(y_i)=ax_{i+1}\), and we define \(y_{i+1}\) such
  that \(\varphi(y_{i+1})\equiv x_{i+1}\) mod \(a\). Then the image of
  \begin{equation*}
    y = \sum_{i=0}^{\infty} a^{i}y_{i},
  \end{equation*}
  satisfies
  \begin{equation*}
    \varphi(y)=\sum_{i=0}^{\infty}a^{i}\varphi(y_i)=\sum_{i=0}^{\infty}(a^{i}x_{i}-a^{i+1}x_{i+1})=x_0=x.
  \end{equation*}
  This proves \(\varphi\) is surjective.

  To show \(\varphi\) is injective, suppose \(\varphi(x)=0\). Since the
  reduction of \(\varphi\) is injective, we must have \(a \mid x\). Then
  \(x=ax_1\) for some \(x_1\). Since \(R\) has no \(a\)-torsion, it follows that
  \(\varphi(x_1)=0\) as well. Continuing this way we see
  \(x \in \bigcap_{\nu=1}^{\infty}a^{\nu}R^{n}\). Since \(R\) is \(a\)-adically
  complete, in particular it is \(a\)-adically separated. Therefore \(x=0\) as
  desired.
\end{proof}

The last basic lemma we need concerning curve theory is a criterion for a formal
group to have an ``ordinary'' reduction modulo \(p\). Recall that a formal Lie
group over a perfect field of characteristic \(p\) is called ordinary, or of
codimension \(0\), or isoclinic of slope \(0\), if the semilinear Frobenius map
on the Dieudonné module is an isomorphism. Isoclinic modules of slope \(0\) are
the simplest in the spectrum of the Dieudonné--Manin classifications of crystals
over a perfect field. See~\cite[VI~\S6, \S8]{lazard:commutative-formal-groups}.

\begin{lemma}%
  \label{lemma:criterion-ordinary}
  Let \(R\) be complete discrete valuation ring of characteristic \(0\). Assume
  that its residue field \(k\) is perfect of characteristic \(p\). Let \(G\) be a formal group on
  \(R\) of dimension \(r\). Let \(\gamma_1, \ldots,\gamma_r\) be a basic set of
  \(p\)-typical curves on \(G\). Let \(\Gamma\) be the reduction of \(G\) modulo
  the maximal ideal of \(R\). Write
  \begin{equation*}
    \log_{G} \gamma_i = \sum_{i=0}^{\infty} a_{i,s} \frac{t^{p^s}}{p^s}, \quad
    a_{i,s} \in \text{ the tangent space of \(G\)}.
  \end{equation*}
  Assume that the matrix \(a_{1} = (a_{1,1},\cdots,a_{r,s})\) is a basis of
  the tangent space modulo the maximal ideal of \(R\). Then \(\Gamma\) is isolinic of slope
  \(0\).
\end{lemma}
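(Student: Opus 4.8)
The plan is to read off the definition: by the convention recalled just above, $\Gamma$ is isoclinic of slope $0$ exactly when the semilinear Frobenius $F$ is bijective on the Dieudonné module $M=\mathcal{C}(\Gamma)$ of $p$-typical curves of $\Gamma$ over $k$. So the entire problem is to prove that $F\colon M\to M$ is an isomorphism, and I will reduce this to a statement on the tangent space $M/VM$. The essential algebraic input is that in the Cartier ring one has $FV=VF=p$ --- this is already visible from the additive formulas in Example~\ref{example:additive-operators} --- so that $F$ and $V$ commute. Consequently $F$ preserves the $V$-adic filtration, $F(V^nM)=V^nFM\subseteq V^nM$, and under the isomorphisms $V^n\colon M/VM\xrightarrow{\sim}V^nM/V^{n+1}M$ (valid because the Cartier module of a formal group is $V$-reduced with $V$ injective, and is $V$-adically complete and separated, see~\cite{lazard:commutative-formal-groups}) each graded piece of $F$ is identified with the single induced map $\bar F\colon M/VM\to M/VM$. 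Since $M$ is complete and separated for this filtration, $F$ is an isomorphism as soon as $\bar F$ is; thus it suffices to show $\bar F$ is bijective.

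To compute $\bar F$ I work first over $R$, which embeds into $R\otimes\mathbb{Q}$, and use the logarithm. As $\gamma_1,\dots,\gamma_r$ is a basic set of $p$-typical curves, Lemma~\ref{lemma:basics-on-curves}(1) lets me expand $F\gamma_i=\sum_{n\ge 0}V^n[y^{(i)}_{n,j}]\gamma_j$ with $y^{(i)}_{n,j}\in R$. Applying $\log_G$ and using the rule of Example~\ref{example:additive-operators}, which turns $\log_G\gamma_i=\sum_s a_{i,s}\,t^{p^s}/p^s$ into $\log_G(F\gamma_i)=\sum_s a_{i,s+1}\,t^{p^s}/p^s$, I compare the lowest-order (tangent) coefficients on both sides. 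The tangent vector of $V^n[c]\gamma_j$ vanishes for $n\ge 1$ and equals $c\,a_{j,0}$ for $n=0$, so the comparison yields $a_{i,1}=\sum_j y^{(i)}_{0,j}\,a_{j,0}$.

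Writing $a_s$ for the matrix with columns $a_{i,s}$ and $Y_0$ for the matrix with $(j,i)$-entry $y^{(i)}_{0,j}$, the previous identity reads $a_1=a_0Y_0$. Here $a_0$ is invertible over $R$, because the tangent vectors $a_{j,0}$ of a basic set of curves form a basis of the tangent space; hence $Y_0=a_0^{-1}a_1$. Reducing the curve expansion modulo $\mathfrak{m}$ via Lemma~\ref{lemma:basics-on-curves}(2) gives $F\bar\gamma_i=\sum_{n,j}V^n[\bar y^{(i)}_{n,j}]\bar\gamma_j$, so modulo $VM$ we obtain $\bar F(\bar\gamma_i)=\sum_j\bar y^{(i)}_{0,j}\bar\gamma_j$; that is, $\bar F$ is the Frobenius-semilinear endomorphism of $M/VM$ whose matrix in the basis $\{\bar\gamma_i\}$ is $\bar Y_0=\bar a_0^{-1}\bar a_1$. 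The hypothesis that $a_1$ is a basis of the tangent space modulo $\mathfrak{m}$ says precisely $\bar a_1\in\mathrm{GL}_r(k)$, while $\bar a_0\in\mathrm{GL}_r(k)$ since $a_0\in\mathrm{GL}_r(R)$; therefore $\bar Y_0\in\mathrm{GL}_r(k)$. A Frobenius-semilinear endomorphism of a finite-dimensional vector space over the perfect field $k$ is bijective if and only if its matrix is invertible, so $\bar F$ is an isomorphism, and by the first paragraph so is $F$. This shows $\Gamma$ is isoclinic of slope $0$.

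I expect the main obstacle to be the bookkeeping in the logarithm step --- correctly deriving the index shift $a_{i,s}\mapsto a_{i,s+1}$ under $F$ and isolating the tangent-level identity $a_1=a_0Y_0$ --- together with pinning down the structural facts about $M$ (the identities $FV=VF=p$, $V$-reducedness, and $V$-adic completeness) from Cartier theory. Once those are in place, the descent from bijectivity of $\bar F$ to bijectivity of $F$ is formal, since $F$ commutes with $V$ and acts as the \emph{same} invertible map on every $V$-adic graded piece.
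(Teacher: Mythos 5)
Your argument is correct, and its engine is the same as the paper's: the index shift $\log_G(F\gamma_i)=\sum_s a_{i,s+1}t^{p^s}/p^s$ means that the invertibility of $a_1$ modulo the maximal ideal (hence over $R$, by Lemma~\ref{lemma:nak}) controls the Frobenius at the tangent level, and the relations $FV=VF$, $F[a]=[a^p]F$ in $E(k)$ do the rest. The finishing moves differ. The paper reads off that the tangent vectors of $F\gamma_1,\dots,F\gamma_r$ are the columns of $a_1$, concludes that these curves form a second basic set, and then rewrites an arbitrary curve $\sum V^n[x_{n,i}]F\gamma_i$ as $F\bigl(\sum V^n[x_{n,i}^{1/p}]\gamma_i\bigr)$ to get $F\mathcal{C}(\Gamma)=\mathcal{C}(\Gamma)$ directly, invoking the criterion of Lazard VI~7.5. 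You instead expand $F\gamma_i$ in the \emph{original} basic set, identify the matrix of the induced semilinear map on $M/VM$ as $\bar a_0^{-1}\bar a_1$, and propagate bijectivity up the $V$-adic filtration; this costs you the structural inputs ($V$-reducedness, $V$-adic completeness, $\mathrm{gr}^nF\cong\bar F$) but delivers bijectivity of $F$ rather than only surjectivity, and makes the "matrix of Frobenius on the tangent space" explicit. Both routes are sound. One caution on your justification: the relation $VF=p$ holds in $E(k)$ for $k$ perfect of characteristic $p$ (which is the only place you use it), but it fails in $E(R)$ for the characteristic-zero ring $R$; indeed the formulas of Example~\ref{example:additive-operators} give $VF\gamma=p\gamma-pa_0t\neq p\gamma=FV\gamma$ on additive curves, so that example does not exhibit $FV=VF$ — you should cite the Dieudonné-ring relations over a perfect field (as the paper does) rather than the additive formulas.
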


\begin{proof}
  Fix a basis of the tangent space of \(G\), we can regard \(a_{i,s}\) as
  column vectors and \(a_{1}\) as a matrix. The condition is then \(a_{1}\) is
  an invertible matrix modulo the maximal ideal of \(R\).

  Let \(F\) be the \(p\)-typical Frobenius operator. We need to show that
  \(\mathcal{C}(\Gamma)=F\mathcal{C}(\Gamma)\)
  by~\cite[VI~7.5]{lazard:commutative-formal-groups}. Since \(a_{1}\) is
  invertible modulo the maximal ideal, it is invertible as a matrix with entries in \(R\)
  See Lemma~\ref{lemma:nak}.
  Since
  \begin{equation*}
    (\log_{G}\circ F\gamma_i)(t) = \sum_{i=0}^{\infty} a_{i+1,s} \frac{t^{p^s}}{p^{s}},
  \end{equation*}
  and since \(a_1\) is invertible, we see the tangent vectors of
  \(F\gamma_1,\ldots,F\gamma_{r}\) still generate the tangent space of \(G\).
  Therefore \(F\gamma_1,\ldots,F\gamma_r\) is also a basic set of curves. This
  means that \(\varphi_{\ast}F\gamma_1,\ldots,\varphi_{\ast}F\gamma_r\) is a
  basic curve of \(\Gamma\). To finish the proof, we recall that in \(E(k)\) the
  following relations hold:
  \begin{equation*}
    FV = VF, \quad F[a] = [a^{p}]F \quad (\forall a \in k).
  \end{equation*}
  Since \(F\gamma_1,\ldots,F\gamma_r\) is a basic set of curves, we can write
  any curve in \(\mathcal{C}(\Gamma)\) as
  \begin{align*}
    \gamma &= \sum_{n,i} V^{n}[x_{n,i}] F\gamma_{i} & (x_{n,i} \in k)\\
           &= \sum_{n,i} FV^{n}[x_{n,i}^{1/p}] \gamma_i & \in F\mathcal{C}(\Gamma)
  \end{align*}
  by Lemma~\ref{lemma:basics-on-curves}(1)
  (the second equality holds thanks to the perfectness of \(k\)).  This implies
  the desired equality \(F\mathcal{C}(\Gamma)=\mathcal{C}(\Gamma)\).
\end{proof}

\begin{example}%
  Set \(R = \mathbb{Z}_{p}\), whose Frobenius operation is the identity. Let us
  consider the Laurent polynomial \(f(t_1,t_2)=t_1+t_2+(t_{1}t_{2})^{-2}\). Its
  Newton polytope consists of two interior points \(u=(0,0)^{T}\) and
  \(v=(-1,-1)^{T}\). The following table summarizes the higher Hasse--Witt
  matrix \(\alpha_{1}\) of \(f\) with respect to \(u,v\).

  \medskip
  \begin{center}
    \begin{tabular}{|c|c|}
      \hline
      \(p^s\) & \(\alpha_s\)\\
      \hline
      \hline
      \(p^s=5k+1\) & \(\begin{bmatrix}\frac{(5k)!}{(k!)^{5}} & 0 \\ 0 & \frac{(5k)!}{(k!)^{2}(3k)!}\end{bmatrix}\)\\
      \(p^s=5k+2\) & \(\begin{bmatrix}0 & \frac{(5k+1)!}{(k!)^{2}(3k+1)!} \\ 0 & 0\end{bmatrix}\)\\
      \(p^s=5k+3\) & \(\begin{bmatrix}0 & 0 \\ \frac{(5k+2)!}{(k!)((2k+1)!)^2} & 0\end{bmatrix}\)\\
      \(p^s=5k+4\) & \(\begin{bmatrix}0 & 0 \\ 0 & 0\end{bmatrix}\)\\
      \hline
    \end{tabular}
  \end{center}

  \medskip
  Thus, the mod \(p\) reduction of \(\Phi_f\) is isoclinic of slope \(0\) if and
  only if \(p \equiv 1\mod 5\), and it is a direct sum of two multiplicative
  groups.
\end{example}

Now we can turn back to the group \(\Phi_{f}\). In the sequel, we denote the
reduction of \(\Phi_{f}\) modulo \(p\) by \(\Gamma_{f}\).

\begin{lemma}%
  \label{lemma:absolute-conjecture-proof}
  Let \(R=W(k)\) be the ring of Witt vectors of a perfect field \(k\) of
  characteristic \(p\). Let \(\Gamma_{f}\) be the reduction of \(\Phi_{f}\)
  modulo \(p\). Assume that the matrix \(\alpha_{1}\)~\eqref{eq:vlasenko-matrix}
  is invertible. Then the \(p\)-adic limit
  \begin{equation*}
    \lim_{s\to\infty} \alpha_{s+1}\cdot (\alpha_s^{\sigma})^{-1}
  \end{equation*}
  (see~\eqref{eq:vlasenko-matrix}) exists~\cite{vlasenko:higher-hasse-witt}, and
  is a matrix of the Frobenius operator \(\eta\) on the Cartier--Dieudonné
  module \(M=\mathcal{C}(\Gamma_f)\).
\end{lemma}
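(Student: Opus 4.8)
The plan is to show that, in a suitable $W(k)$-basis of $M=\mathcal{C}(\Gamma_f)$, the Frobenius operator $\eta=F$ is represented by the matrix $\alpha$, and to extract this matrix from the logarithm coefficients $\alpha_s$ by a $p$-adic limit. First, since $\alpha_1$ is invertible, Lemma~\ref{lemma:criterion-ordinary} shows that $\Gamma_f$ is isoclinic of slope $0$, so $F$ is a $\sigma$-semilinear automorphism of $M$. The $p$-typical curves $\gamma_v$ (Item~\eqref{item:p-typical-curve-vlasenko}) have tangent matrix $\alpha_0=I$, because $\alpha_{u,v,0}$ is the coefficient of $t^{v-u}$ in $f^{0}=1$; hence their reductions $\bar\gamma_v$ form a basis of $M/VM$, and, $\Gamma_f$ being ordinary and $M$ being $V$-adically complete, $\{\bar\gamma_v\}$ is a $W(k)$-basis of $M$. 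The theorem then becomes the assertion that $F\bar\gamma_v=\sum_w\alpha_{wv}\bar\gamma_w$, i.e. that $\alpha$ is the matrix of $\eta=F$ in this basis.

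Next I would expand the Frobenius transform over $W(k)$. As $\alpha_1$ is invertible, $\{F\gamma_v\}$ is again a basic set of $p$-typical curves on $\Phi_f$, so Lemma~\ref{lemma:basics-on-curves}(1) gives $F\gamma_v=\sum_{n,w}V^{n}[c^{(v)}_{n,w}]\gamma_w$ with $c^{(v)}_{n,w}\in W(k)$. Applying $\log_{\Phi_f}$ and the rules of Example~\ref{example:additive-operators} — noting that $(\log_{\Phi_f}\circ F\gamma_v)(\tau)=\sum_s\alpha_{v,s+1}\tau^{p^s}/p^s$ — and matching coefficients of $\tau^{p^s}/p^s$ produces the recursion
\begin{equation*}
  \alpha_{s+1}=\sum_{n=0}^{s}p^{n}\,\alpha_{s-n}\,C_n^{[\,p^{s-n}\,]},\qquad C_n=\bigl(c^{(v)}_{n,w}\bigr),
\end{equation*}
where $C_n^{[m]}$ denotes the entrywise $m$-th power; in particular $C_0=\alpha_1$. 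Reducing the same expansion modulo $p$ recovers $F\bar\gamma_v=\sum_{n,w}V^n[\bar c^{(v)}_{n,w}]\bar\gamma_w$, which is the expression of $\eta=F$ in the basis $\{\bar\gamma_v\}$ under the embedding $W(k)\hookrightarrow E(k)$, $c\mapsto\sum_nV^n[c_n]F^n$; thus the matrix of $F$ is assembled from the $\bar C_n$, and I must identify it with $\alpha$.

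To pin this down I would feed these relations into Lemma~\ref{lemma:two-basic-sets-of-curves}. Comparing $\{F\gamma_v\}$ with a second lift $\{\gamma^\sharp_v\}$ of $F\bar\gamma_v$ — obtained by transporting the $\sigma$-twisted curves $\gamma_v^\sigma$ of the base change $\Phi_f^{\sigma}$ (whose logarithm matrix is $\alpha_s^\sigma$) through a lift of the relative Frobenius $\Gamma_f\simeq\Gamma_f^{(p)}$, whose Dieudonné realization is the sought matrix — the lemma yields congruences of the shape $\alpha_{s+1}\equiv\eta\,\alpha_s^{\sigma}\pmod{p^{s}}$ (writing $\eta$ also for its matrix). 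Together with Proposition~\ref{proposition:vlasenko-congruences}, which already guarantees that $\alpha=\lim_s\alpha_{s+1}(\alpha_s^\sigma)^{-1}$ exists and satisfies $\alpha\equiv\alpha_{s+1}(\alpha_s^\sigma)^{-1}\pmod{p^s}$, letting $s\to\infty$ would force $\alpha$ to equal the matrix of $\eta$.

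The hard part is this last identification, where two normalizations must be reconciled. On one hand the logarithm recursion carries honest $p$-power twists $C_n^{[p^{s-n}]}$, whereas the Dieudonné action of $W(k)$ uses the Witt-vector Frobenius $\sigma$, and the two agree only modulo $p$. On the other hand the tangent of $\{F\gamma_v\}$ is $\alpha_1$, not $\eta$, so $\{F\gamma_v\}$ and the Frobenius-transported set $\{\gamma^\sharp_v\}$ have tangents that coincide only modulo $p$ (indeed $\eta\equiv\alpha_1\pmod p$ by Proposition~\ref{proposition:vlasenko-congruences}(1)). Applying Lemma~\ref{lemma:two-basic-sets-of-curves} with genuinely equal tangents therefore requires correcting one set by a matrix congruent to $I$ modulo $p$, and it is exactly the higher $p$-adic terms of this correction that are absorbed in passing to the limit. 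Making this correction precise, and checking that it leaves the leading congruence intact, is the technical heart of the argument.
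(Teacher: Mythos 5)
Your setup is on the right track --- ordinariness via Lemma~\ref{lemma:criterion-ordinary}, the curves $\gamma_v$ with tangent matrix $\alpha_0=I$ forming a basic set, and the intent to invoke Lemma~\ref{lemma:two-basic-sets-of-curves} --- but the proof is not complete, and you say so yourself: the ``technical heart'' you defer is precisely the content of the lemma. The comparison you propose, between $\{F\gamma_v\}$ and a Frobenius-transported set $\{\gamma_v^\sharp\}$, cannot be fed into Lemma~\ref{lemma:two-basic-sets-of-curves} as stated, because that lemma requires the two basic sets to have \emph{equal} tangent vectors and \emph{equal} reductions mod $p$, whereas your two sets have tangents $\alpha_1$ and $\eta$ which agree only mod $p$. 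The correction ``by a matrix congruent to $I$ modulo $p$'' that you wave at is not harmless: an error of order $p$ in the curves propagates into the $p^s$-coefficient congruences in an uncontrolled way, and nothing in your sketch shows it is absorbed in the limit. Likewise, your recursion $\alpha_{s+1}=\sum_n p^n\alpha_{s-n}C_n^{[p^{s-n}]}$ involves honest entrywise $p$-powers of Witt vectors rather than $\sigma$-twists, and the passage from the mod-$p$ data $\bar C_n$ to the matrix of $F$ on $M$ (which requires rewriting $\sum_n V^n[\bar c_n]$ in the form $\sum_n V^n[\cdot]F^n$ of the Witt-coordinate embedding $W(k)\hookrightarrow E(k)$) is asserted but not carried out.

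The missing idea is to avoid comparing Frobenius transforms altogether. The paper instead builds, from the Dieudonn\'e module $(M,\eta)$, the generalized Lubin--Tate group $\mathrm{LT}(M,\eta)$ of Cartier; since $\Gamma_f$ is isoclinic of slope $0$, any lift of $\Gamma_f$ over $W(k)$ is isomorphic to $\mathrm{LT}(M,\eta)$, in particular $\Phi_f\cong\mathrm{LT}(M,\eta)$. This group carries a second explicit basic set of $p$-typical curves $\gamma_v^{\ast}$ with
\begin{equation*}
  \log\gamma_v^{\ast}(x)=\sum_{s\geq 0}\eta^{s}(e_v)\,\frac{x^{p^s}}{p^s},
\end{equation*}
so its coefficient matrices are $b_s=b_1 b_1^{\sigma}\cdots b_1^{\sigma^{s-1}}$ with $b_1$ the matrix of $\eta$ and $b_0=\mathrm{Id}$. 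Both $\{\gamma_v\}$ and $\{\gamma_v^{\ast}\}$ have tangent matrix the identity and both reduce to $\{e_v\}$ mod $p$, so Lemma~\ref{lemma:two-basic-sets-of-curves} applies \emph{exactly}, with no correction, and yields $\alpha_s\equiv b_s\pmod{p^s}$. From there the identity $b_{s+1}(b_s^{\sigma})^{-1}=b_1$ and a two-line estimate give $\alpha_{s+1}(\alpha_s^{\sigma})^{-1}\equiv b_1\pmod{p^s}$, hence the limit exists and equals the matrix of $\eta$. Without this (or an equivalent device producing a genuine second basic set on $\Phi_f$ itself whose coefficients are the twisted products of the Frobenius matrix), your argument does not close.
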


\begin{proof}
  We consider the generalized Lubin--Tate group \(\mathrm{LT}(M,\eta)\) à la
  Cartier~\cite{cartier:generalized-lubin-tate}. In general, for any formal
  group \(\Gamma_{f}\) of finite height over \(k\), the generalized Lubin--Tate
  group \(\mathrm{LT}(M,\eta)\) (\(M\) being the Cartier--Dieudonné module of
  \(p\)-typical curves on \(\Gamma_{f}\)) is the universal extended lift of
  \(\Gamma_{f}\) over \(W(k)\), see~\cite[VII~7.17]{lazard:commutative-formal-groups}.
  In our case, \(\Gamma_{f}\) is isoclinic of slope 0 (by Lemma~\ref{lemma:criterion-ordinary}),
  \(\mathrm{LT}(M,\eta)\) is therefore without additive kernels. Thus, for any
  lift \(\Phi\) of \(\Gamma_{f}\) over \(W(k)\), we always have an isomorphism
  \(\mathrm{LT}(M,\eta)\cong \Phi\). In particular, there is an isomorphism
  \(\mathrm{LT}(M,\eta)\cong\Phi_f\).

  The idea of the proof is as follows. The generalized Lubin--Tate group admits
  an explicit set of basic \(p\)-typical curves defined by the Frobenius
  operation, and our formal group \(\Phi_f\) admits an explicit set of basic
  \(p\)-typical curves defined by \(\alpha_s\). The above mentioned isomorphism
  will then provide a rule transforming the Frobenius related curve set to the
  expansion-coefficients related curve set. The limit formula will then be a
  consequence of Lemma~\ref{lemma:two-basic-sets-of-curves}.

  Let us carry out the above scheme. Define,
  for \(v \in \Delta^{\circ} \cap \mathbb{Z}^{d}\) a curve
  \(\gamma_{v} \in \mathcal{C}(\Phi_{f})\) by
  \begin{equation*}
    \log_{\Phi_f}\gamma_{v}(x) = \sum_{s=0}^{\infty} \alpha_{v,s} \frac{x^{p^s}}{p^{s}},
  \end{equation*}
  where
  \(\alpha_{v,s}=((\alpha_{s})_{u,v}:u\in\Delta^{\circ}\cap\mathbb{Z}^{d})\)~\eqref{eq:vlasenko-matrix}.
  In view of~\ref{situation:formal-logarithm}, each \(\gamma_{v}\) is a \(p\)-typical
  curve of the formal group functor \(\Phi_{f}\). Since \(\alpha_0\) is the
  identity matrix, the \(\gamma_{v}\)'s  form a basic set of
  \(p\)-typical curves. Since
  \(M=\mathcal{C}(\Gamma_{f})=E(k)\otimes_{E(W)}\mathcal{C}(\Phi_{f})\)~\cite[VII~6.8]{lazard:commutative-formal-groups},
  the images \(e_{v}\) of the curves \(\gamma_{v}\) form a basic set of curves
  of \(\Gamma_{f}\). In particular,
  \(\{e_{v}:v\in\Delta^{\circ}\cap\mathbb{Z}^{d}\}\) is a basis of the
  Cartier--Dieudonné module \(M\). For each \(e_{v}\), let \(\gamma_{v}^{\ast}\)
  be the curve in \(\mathcal{C}(\mathrm{LT}(M,\eta))\) given by
  \begin{equation*}
    \log_{\Phi_{f}} \gamma_{v}^{\ast}(x) = \sum_{s=0}^{\infty} \eta^{s}(e_{v}) \frac{x^{p^s}}{p^{s}} = \sum_{s=0}^{\infty}b_{v,s}\frac{x^{p^s}}{p^s}.
  \end{equation*}
  Then \(\gamma_{v}^{\ast}\) form a basic set of curves in
  \(\Phi_{f} \cong \mathrm{LT}(M,\eta)\),
  see~\cite[(8), (17) above, (19c)]{cartier:generalized-lubin-tate}.
  By construction,
  \(\{\gamma_{v}^{\ast}\}\) and \(\{\gamma_{v}\}\) are two basic sets of curves
  on \(\Phi_{f}\) which restrict to the same set of curves \(\{e_v\}\) of \(\Phi_{f_0}\).
  This enables us to apply Lemma~\ref{lemma:two-basic-sets-of-curves}, and we
  get a rather strong congruence relation:
  \begin{equation*}
    \alpha_{s} \equiv b_{s} \mod p^{s}.
  \end{equation*}
  Write \(\alpha_s = b_s + p^s c_s\).
  By definition and by that \(\eta\) is semilinear, \(b_{1}\) is the matrix
  of \(\eta\) with respect to the basis \(\{e_{v}\}\), and the matrix
  \(b_{s}\) is \(b_1 b_1^{\sigma} \cdots b_{1}^{\sigma^{s-1}}\).
  Note in particular we have \(b_{s+1}(b_{s}^{\sigma})^{-1} = b_1\).
  It follows that
  \begin{align*}
    \alpha_{s+1} (\alpha_{s}^{\sigma})^{-1}
    &= (b_{s+1} + p^{s+1}c_{s+1})(b_{s}^{\sigma})^{-1}(\mathrm{Id}+p^{s}c_{s}(b_{s}^{\sigma})^{-1})^{-1} \\
    &\equiv b_{s+1}(b_{s}^{\sigma})^{-1} \mod p^{s} \\
    &\equiv b_{1} \mod p^s.
  \end{align*}
  Therefore, \(p^{s} \mid (b_{1} - \alpha_{s+1}(\alpha^{\sigma}_s)^{-1})\).
  Thus the limit \(\alpha = \lim \alpha_{s+1}(\alpha^{\sigma}_s)^{-1}\) exists,
  and equals \(b_{1}\), which is the matrix of the Frobenius operation on
  \(M\) with respect to the basis \(\{e_{v}\}\).
\end{proof}

We have explained that, when the base is \(W(k)\), the limit matrix \(\alpha\)
is related to the Frobenius action on the Cartier--Dieudonné module of
\(\Gamma_{f}\). Now if \(R/p\) is not a perfect field, the analogue of the
Cartier--Dieudonné module of the formal group \(\Gamma_{f}\) is its
(covariant) Dieudonné crystal \(\mathbb{D}^{\ast}(\Gamma_{f})\) (which is the
contravariant Dieudonné crystal of the Cartier dual of \(\Gamma_{f}\)). The basic
reference for Dieudonné crystal
is~\cite{berthelot-breen-messing:crystalline-dieudonne-theory-2}. We shall not
review the theory of Dieudonné crystals. It suffices to know that the value
of \(\mathbb{D}^{\ast}(\Gamma_{f})\) on a ``perfect point''
\(x:\mathrm{Spec}(k) \to \mathrm{Spec}(R/p)\) (\(k\) is a perfect field) of
\(R/p\) is given by the Cartier--Dieudonné module of the fiber of
\(\Gamma_{f}\) over \(x\).

The following theorem shows that we can
identify the limit matrix \(\alpha\) with the Frobenius action on the value of
the Dieudonné crystal \(\mathbb{D}^{\ast}(\Gamma_{f})\) on \(R\). It turns out
we can reduce the general case to the special case treated before, by some
standard yoga.

\begin{theorem}%
  \label{theorem:dieudonne-module-frob}
  Let notation be as in~\textup{\ref{situation:notation-toric}}.
  Assume further that
  \begin{enumerate}
  \item \(R\) is \(p\)-adically complete, \(p\)-torsion free ring,
  \item \(R\) has a lifting \(\sigma\) of the absolute Frobenius of \(R/p\).
  \end{enumerate}
  Let \(\mathbb{D}^{\ast}(\Gamma_{f})\) be the (covariant) Dieudonné crystal of the reduction
  \(\Gamma_{f}\) of \(\Phi_{f}\) mod \(p\). Assume \(\alpha_{1}\) is invertible
  in \(R/p\). Then the \(p\)-adic limit \(\alpha\) is the Frobenius of the the
  \(R\)-module \(\mathbb{D}^{\ast}(\Gamma_f)_{R}\).
\end{theorem}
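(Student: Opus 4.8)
The plan is to deduce the statement from the perfect-residue-field case already proved in Lemma~\ref{lemma:absolute-conjecture-proof}, by descent through a universal $\delta$-ring. Both sides of the desired equality — the Frobenius matrix of $\mathbb{D}^{\ast}(\Gamma_f)_{R}$ and the $p$-adic limit $\alpha$ — are manifestly compatible with $\sigma$-equivariant base change, so it suffices to prove the identity over one universal ring that dominates $(R,\sigma,f)$ and that admits enough maps to Witt rings of perfect fields. Let $\mathcal{A}_0$ be the $p$-adic completion of the free $\delta$-ring over $\mathbb{Z}_p$ on a set of variables indexed by the (finitely many) exponents $u$ with $a_u\neq 0$, equipped with its canonical Frobenius lift, and let $\mathbf{f}$ be the Laurent polynomial over $\mathcal{A}_0$ whose coefficients are these free generators. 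The universal property of the free $\delta$-ring gives a unique $\delta$-ring homomorphism $\mathcal{A}_0\to R$ carrying the generators to the coefficients $a_u$; it is automatically $\sigma$-equivariant because $R$ is $p$-torsion free.

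To accommodate the hypothesis that $\alpha_1$ be invertible, I pass to the $p$-completed $\delta$-localization $\mathcal{A}=\mathcal{A}_0\{(\det\alpha_1)^{-1}\}$, which inverts $\det\alpha_1$ together with all its Frobenius transforms. This $\mathcal{A}$ remains $p$-torsion free and $p$-adically complete, carries a Frobenius lift, and over $\mathcal{A}/p$ the matrix $\alpha_1$ is invertible; moreover $\mathcal{A}_0\to R$ factors through $\mathcal{A}$, since $\det\alpha_1$ maps to a unit of $R$ (being a unit modulo $p$ in the $p$-complete ring $R$). I first prove the theorem for $(\mathcal{A},\mathbf{f})$ and then transport it along $\mathcal{A}\to R$. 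The transport is legitimate by two functorialities: the matrices $\alpha_s$ are the coefficient expansions of $\mathbf{f}^{p^s-1}$, hence are pulled back from $\mathcal{A}$, and a continuous $\sigma$-equivariant homomorphism carries $\alpha=\lim_{s}\alpha_{s+1}(\alpha_s^{\sigma})^{-1}$ to its counterpart over $R$; and $\Phi_{\mathbf{f}}$, $\Gamma_{\mathbf{f}}$, and the Dieudonné crystal $\mathbb{D}^{\ast}(\Gamma_{\mathbf{f}})$ are compatible with base change, the Frobenius of a Dieudonné crystal being functorial and compatible with the chosen Frobenius lifts, so the Frobenius matrix of $\mathbb{D}^{\ast}(\Gamma_f)_{R}$ is the image of that over $\mathcal{A}$.

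Over $\mathcal{A}$ I detect the equality on perfect points. Since $\mathcal{A}/p$ is a localization of a polynomial ring over $\mathbb{F}_p$ it is reduced, and since $\mathcal{A}$ arises from a free $\delta$-ring, every homomorphism $\mathcal{A}/p\to k$ to a perfect field lifts, by assigning arbitrary lifts to the generators, to a $\delta$-ring homomorphism $\mathcal{A}\to W(k)$ (on which $\det\alpha_1$ necessarily maps to a unit). Because $\mathcal{A}$ is $p$-torsion free and $p$-adically separated, the induced map $\mathcal{A}\to\prod W(k)$, the product taken over all such homomorphisms, is injective: a nonzero element is $p^n y$ with $\bar y\neq 0$ in the reduced ring $\mathcal{A}/p$, and a perfect point not vanishing on $\bar y$ lifts to a $\delta$-homomorphism carrying $y$ to a non-$p$-divisible element, hence $p^n y$ to a nonzero element of the $p$-torsion free ring $W(k)$. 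It therefore suffices to check the equality of the two matrices after every base change $\mathcal{A}\to W(k)$. There the image of the universal $\alpha$ is the limit matrix of the reduced Laurent polynomial $\mathbf{f}_k$ over $W(k)$, while the value of $\mathbb{D}^{\ast}(\Gamma_{\mathbf{f}_k})$ on $W(k)$ is the Cartier--Dieudonné module of $\Gamma_{\mathbf{f}_k}$ together with its Frobenius $\eta$; Lemma~\ref{lemma:absolute-conjecture-proof} identifies the two, and the equality over $\mathcal{A}$, hence over $R$, follows.

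The step I expect to be the main obstacle is the identification, over $W(k)$, of the Frobenius of the crystalline Dieudonné crystal evaluated on $W(k)$ with the Frobenius operator $\eta$ on the Cartier--Dieudonné module occurring in Lemma~\ref{lemma:absolute-conjecture-proof}: this is the comparison between crystalline Dieudonné theory and Cartier theory over a perfect field, which must be invoked with matching conventions (covariance and the direction of semilinearity). A secondary, more bookkeeping difficulty is verifying that the $p$-completed $\delta$-localization $\mathcal{A}$ genuinely retains $p$-torsion freeness and $p$-adic separatedness, and that the crystal's Frobenius is strictly compatible with the $\delta$-map $\mathcal{A}\to W(k)$; once these are in hand, the remaining arguments are the formal $\delta$-ring and $p$-adic manipulations sketched above.
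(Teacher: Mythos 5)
Your proposal is correct and follows the same overall strategy as the paper: reduce to a universal \(p\)-complete, \(p\)-torsion-free \(\delta\)-ring obtained by \(p\)-completing the Frobenius-stable localization of the free \(\delta\)-ring on the coefficients at \(\det\alpha_1\), transport both the limit matrix and the Dieudonné--Frobenius along the resulting \(\delta\)-map to \(R\), and settle the universal case by reducing to Lemma~\ref{lemma:absolute-conjecture-proof} over Witt vectors of a perfect field. The one genuinely different step is the detection argument over the universal ring: the paper observes that \(\mathcal{R}\) and \(\mathcal{R}/p\) are integral domains and invokes Katz's construction of a single Frobenius-equivariant embedding \(\mathcal{R}\hookrightarrow W(K_0^{\mathrm{perf}})\) with \(K_0=\operatorname{Frac}(\mathcal{R}/p)\), whereas you use a separating family of \(\delta\)-points \(\mathcal{A}\to W(k)\) lifted from field-valued points of the reduced ring \(\mathcal{A}/p\), together with \(p\)-adic separatedness, to get an injection into a product of Witt rings. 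Both are valid; yours trades the citation of Katz's embedding for an elementary lifting-of-points argument that only needs \(\mathcal{A}/p\) reduced rather than a domain, at the cost of checking the identity at all perfect points rather than one. The difficulty you flag --- matching the crystalline Dieudonné Frobenius over \(W(k)\) with the Cartier-theoretic operator \(\eta\) of Lemma~\ref{lemma:absolute-conjecture-proof}, with consistent covariance conventions --- is exactly the point the paper also passes over lightly (it is stated as the defining property of \(\mathbb{D}^{\ast}\) at perfect points), so you have located the same implicit input rather than introduced a new gap.
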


\begin{proof}
  By Lemma~\ref{lemma:nak}, \(\alpha_1\) itself is invertible in
  \(R\).
  As a first step, we assume both \(R\) and \(R/p\) are integral domains.
  In this step, we repeat a construction used by
  N.~Katz~\cite{katz:internal-reconstruction-unit-root-f-crystal-via-expansion}.
  Let \(A\) be the perfection of \(R/p\). Then there is a unique lifting of the
  inclusion \(R/p \to A\) to an inclusion \(R \to W(A)\) which sits in a
  commutative diagram
  \begin{equation*}
    \begin{tikzcd}
      R \ar[d,"\sigma"] \ar[hook,r] & W(A) \ar[d,"\sigma"] \\
      R \ar[r] & W(A)
    \end{tikzcd}
  \end{equation*}
  where the right vertical arrow, still denote by \(\sigma\), is the canonical
  Frobenius of \(W(A)\). Let \(K_0\) be the field of fractions of \(R/p\).
  In~\cite[\S7]{katz:internal-reconstruction-unit-root-f-crystal-via-expansion},
  it is shown that we have an injection \(R \to W(K_0^{\text{perf}})\). We have
  two matrices \(\alpha\) and \(\beta_1\), both have entries in \(R\). In order
  to show \(\alpha = \beta_{1}\), it suffices to prove it in a larger ring
  \(W(K_{0}^{\text{perf}})\). The result for this ring has been established in
  Lemma~\ref{lemma:absolute-conjecture-proof} above.

  We proceed to prove the theorem in its full generality. We shall use
  some simple properties about \(\delta\)-rings~(see for
  example~\cite[\S2]{bhatt-scholze:prism}). The upshot is that on a
  \(p\)-torsion free ring, having a \(\delta\)-ring structure is equivalent to fixing a
  lifting of the mod \(p\) Frobenius, and a \(\delta\)-ring homomorphism between
  \(p\)-torsion free rings is equivalent to a map preserving Frobenii.

  Set
  \(\mathcal{R}_0=\mathbb{Z}_{(p)}[x_{w}:w\in\Delta\cap \mathbb{Z}^{d}]\).
  The free
  \(\delta\)-ring with variables \(x_{w}\) is denoted by
  \(\mathcal{R}_1=\mathbb{Z}_{(p)}\{x_{w}:w\in\Delta\cap\mathbb{Z}^{d}\}\)
  (see~\cite[2.11]{bhatt-scholze:prism}). Abstractly, this is a polynomial ring
  with infinitely many variables.
  We use \(\phi\) to denote the Frobenius of \(\mathcal{R}_{1}\).

  Let \(\mathcal{R}\) be the \(p\)-adic completion of the localization
  \(\mathcal{R}_{1}[\phi^{m}(\det A_1)^{-1},m\geq 0]\). where \(A_{1}\) is the
  Hasse--Witt matrix for
  \(\mathbf{f}(t)=\sum_{w\in\Delta\cap\mathbb{Z}^{d}}x_{w}t^{w}\).
  Since we are localizing a system stable under the Frobenius, \(\mathcal{R}\)
  is a \(\delta\)-ring.

  \begin{claim1}
    The \(\delta\)-ring \(\mathcal{R}\) has the following universal property: suppose that we are
    given a ring homomorphism \(\varphi:\mathcal{R}_0 \to R\), where
    \begin{itemize}
    \item \(R\) is a \(p\)-adically complete, \(p\)-torsion free \(\delta\)-ring,
    \item \(\varphi(\det A_1)\) is invertible on \(R\),
    \end{itemize}
    there is a unique
    \(\delta\)-ring map \(\mathcal{R}\to R\) compatible with \(\varphi\).
  \end{claim1}

  \begin{claim2}
    Let \(R\) be as in Claim 1.
    Let \(\sigma\) be the Frobenius lift of \(R\). then \(x \in R\) is invertible
    if and only if \(\sigma(x)\) is.
  \end{claim2}

  \begin{proof}[Proof of Claim 2]
    Since \(\sigma(x)=x^{p}+p\delta(x)\), and \(R\) is \(p\)-adicaly complete,
    \(\sigma(x)\) is invertible if and only if \(x^{p}\) is invertible. But
    \(x^{p}\) is invertible if and only if \(x\) is.
  \end{proof}

  Applying Claim 2 to \((R,\sigma)=(\mathcal{R},\phi)\), we have
  \(\mathcal{R}=\mathcal{R}[(\det A_1)^{-1}]\).

  \begin{proof}[Proof of Claim 1]
    The universal property of
    \(\mathcal{R}_1\) implies that \(\varphi\) canonically factors through
    \(\mathcal{R}_{1}\) as a homomorphism of \(\delta\)-rings. By
    Claim 2, as \(\varphi(\det A_1)\) is invertible in \(R\),
    \(\sigma^{m}(\varphi(\det A_1))\) are all invertible in \(R\). Thus \(\varphi\)
    canonically factors through a \(\delta\)-ring homomorphism
    \(\mathcal{R}_{1}[\phi^{m}(\det A_1)^{-1}:m\geq 0] \to R\). Passing to the
    completion finishes the argument.
  \end{proof}

  Since \(\mathcal{R}\) is a completion of a localization of a
  polynomial ring (with infinitely many variables), \(\mathcal{R}\) is an integral
  domain. Therefore the theorem holds for the Laurent polynomial
  \(\mathbf{f}(t)=\sum_{w\in\Delta\cap\mathbb{Z}^{d}} x_{w}t^{w}\) with
  coefficients in \(\mathcal{R}\).

  Now let \(f\) be as in the statement of the theorem.
  By construction, there is a homomorphism \(\mathcal{R}_0 \to R\) sending
  \(\mathbf{f}\) to \(f\). Since \(R\) is equipped with a \(p\)-Frobenius and
  satisfies the hypotheses of Claim 1,
  we get a ring homomorphism \(\Psi:\mathcal{R} \to R\) compatible with the
  Frobenii on \(\mathcal{R}\) and \(R\). As the formal
  group laws \(F_{f}\), \(F_{\mathbf{f}}\) are defined by coefficients of
  expansions, we have \(\Psi_{\ast}\Phi_{\mathbf{f}}=\Phi_{f}\).
  Since \(\mathcal{R}\) is the completion of a localization of a polynomial
  ring, we know \(\mathcal{R}\) and \(\mathcal{R}/p\) are domains. Let
  \(\mathbb{D}^{\ast}(\Gamma_{\mathbf{f}})\) be the covariant Dieudonné crystal of the
  reduction of \(\Phi_{\mathbf{f}}\). This is a special sheaf on the big
  crystalline site \(\mathrm{CRIS}((\mathcal{R}/p)/\mathbb{Z}_{p})\).

  We have a commutative diagram
  \begin{equation*}
    \begin{tikzcd}
      \mathcal{R} \ar[r,"\Psi"] \ar[d] & R \ar[d] \\
      \mathcal{R}/p \ar[r, "\psi"] & R/p
    \end{tikzcd}.
  \end{equation*}
  Since \(\Phi_{\mathbf{f}}\) is formally smooth over \(R\), its formation
  commutes with base change. We have
  \(\Gamma_{\mathbf{f}}\otimes_{\mathcal{R}/p}R/p=\Phi_{\mathcal{R}}\otimes_{\mathcal{R}}R/p=\Gamma_{f}\).
  Therefore
  \(\mathbb{D}^{\ast}(\Gamma_f)=\mathrm{Spec}(\psi)^{\ast}\mathbb{D}^{\ast}(\Gamma_{\mathbf{f}})\)
  by \cite[(1.3.3.4)]{berthelot-breen-messing:crystalline-dieudonne-theory-2}
  and the definition of the pull-back
  functor~\cite[p.~30]{berthelot-breen-messing:crystalline-dieudonne-theory-2}.
  Since the Frobenius action on \(\mathbb{D}^{\ast}(\Gamma_f)\) is induced
  from that of \(\mathbb{D}^{\ast}(\Gamma_{\mathbf{f}})\), and we have checked
  that in the ``universal case'' \(\beta_{1}=\alpha\), the theorem for
  \(R\) and \(f\) follows from the theorem for \(\mathcal{R}\) and
  \(\mathbf{f}\) by base change.
\end{proof}

We finish with a remark on the relation between the Dieudonné module and rigid
cohomology.

\begin{remark}[Relation with rigid cohomology]%
  \label{remark:relation-with-rigid-cohomology}
  So far we have been completely ignoring the geometric meaning of
  \(\mathcal{C}(\Gamma_{f})\). In this remark we explain how to relate \(\mathcal{C}(\Gamma)\) to
  quantities with geometric meaning. In addition to the hypotheses above we assume further
  that \(R\) is a noetherian ring. Then by
  Lemma~\ref{lemma:formal-lie-cohomology}, we can identify \(\Gamma_{f}\) with
  an Artin--Mazur type formal group functor.

  Let \(X\) be closure of \(f=0\) in \(\mathbb{P}:=\mathbb{P}_{\Sigma,R}\). Let
  \(U = \mathbb{P} - X\).
  Let \(\mathbb{P}_{0}\), \(X_0\), \(U_0\) be the reduction of \(\mathbb{P}\),
  \(X\), \(U\) modulo \(p\), respectively.
  Assume that \(X\) is flat (so the formation of its ideal sheaf commutes with
  base change). Then \(\Gamma_{f}\) is the Artin--Mazur formal group functor
  associated with the ideal sheaf of \(X\).
  The value of \(\mathbb{D}^{\ast}(\Gamma_f)\) at a perfect point \(x: R \to k\)
  is the Witt vector cohomology
  \begin{equation*}
    \mathrm{H}^{d}(\mathbb{P}_{0}\otimes_{R}k,\mathrm{Ker}\{ W\mathcal{O}_{\mathbb{P}_0\otimes_{R}k} \to W \mathcal{O}_{X_0\otimes_{R}k} \}).
  \end{equation*}
  The ``isogeny class'' of \(\mathcal{C}(\Gamma_{f}\otimes_{R}k)\) is then the slope \(<1\)
  part of the rigid cohomology group \(\mathrm{H}^{d}_{\text{rig,c}}(U_0\otimes_{R}k)\)
  with proper
  support~\cite[Theorem~1.2]{berthelot-bloch-esnault:witt-vector-cohomology}.

  Note that this is also  the
  slope \(<1\) part of the \((d-1)\)th rigid cohomology of the hypersurface
  \(X_{0}\otimes_{R,x}k\), since we have an exact sequence of vector spaces over
  \(W(k)[1/p]\):
  (assuming \(d \geq 2\) to avoid the trivial case)
  \begin{equation*}
    \mathrm{H}^{d-1}_{\text{rig}}(\mathbb{P}_0\otimes_{R}k) \to
    \mathrm{H}^{d-1}_{\text{rig}}(X_0 \otimes_{R}k) \to
    \mathrm{H}^{d}_{\text{rig},c}(U_0\otimes_{R}k) \to
    \mathrm{H}^{d}_{\text{rig}}(\mathbb{P}_0\otimes_{R}k)
  \end{equation*}
  and since
  \(\mathrm{H}^{d-1}_{\text{rig}}(\mathbb{P}_0\otimes_{R}k)\) and
  \(\mathrm{H}^{d}_{\text{rig}}(\mathbb{P}_0\otimes_{R}k)\) are isoclinic
  of slope \(d-1\), \(d\) respectively.

  Under the hypothesis that \(\alpha_1\) is invertible,  the
  limit \(\alpha\) then gives a way to construct a formula for the
  ``unit-roots'' of \(X_0\).
  Assuming \(k=\mathbb{F}_{q}\) is a finite field,
  Theorem~\ref{theorem:dieudonne-module-frob} then gives a way to extract the
  unit roots of the zeta functions of a flat family of (even singular)
  hypersurfaces in a possibly singular toric variety over \(k\).
\end{remark}

\bibliographystyle{plain}
\IfFileExists{/Users/dzhang/Nextcloud/projects/bibliographies.bib}%
{\bibliography{/Users/dzhang/Nextcloud/projects/bibliographies.bib}}%
{\bibliography{/home/dzhang/Nextcloud/projects/bibliographies.bib}}%
\end{document}